\pgfplotsset{width=10cm,compat=1.9}
\newtheorem{theorem}{Theorem}[section]
\newtheorem{definition}[theorem]{Definition}
\newtheorem{lemma}[theorem]{Lemma}
\newtheorem{corollary}[theorem]{Corollary}
\newtheorem{remark}[theorem]{Remark}
\newtheorem{conjecture}[theorem]{Conjecture}
\newtheorem{problem}[theorem]{Problem}
\newcommand{\R}{\mathbb{R}}
\newcommand{\K}{\mathcal{K}} 
\newcommand{\Sp}{\mathbb{S}^{n}}
\newcommand{\HH}{\mathbb{H}}
\DeclareMathOperator{\conv}{conv}
\DeclareMathOperator{\vol}{vol}
\DeclareMathOperator{\proj}{proj}
\DeclareMathOperator{\dist}{dist}
\DeclareMathOperator{\area}{area}
\DeclareMathOperator{\perim}{perim}
\DeclareMathOperator{\bd}{bd}
\DeclareMathOperator{\Sph}{\mathbb{S}}
\DeclareMathOperator{\diam}{diam}
\DeclareMathOperator{\inter}{int}
\DeclareMathOperator{\as}{as}
\DeclareMathOperator{\del}{del}
\title{Steiner symmetrization on the sphere}
\author[B. Basit]{Bushra Basit}
\author[S. Hoehner]{Steven Hoehner}
\author[Z. L\'angi]{Zsolt L\'angi}
\author[J. Ledford]{Jeff Ledford}
\address{Bushra Basit, Department of Algebra and Geometry, Budapest University of Technology and Economics,
M\H uegyetem rkp. 3, H-1111 Budapest, Hungary}
\email{bushrabasit18@gmail.com}
\address{Steven Hoehner, Department of Mathematics \& Computer Science, Longwood University,
201 High Street, Farmville, VA 23909, U.S.A.}
\email{hoehnersd@longwood.edu}
\address{Zsolt L\'angi, Department of Algebra and Geometry, Budapest University of Technology and Economics,
M\H uegyetem rkp. 3, H-1111 Budapest, Hungary, and HUN-REN Alfr\'ed R\'enyi Institute of Mathematics,
Re\'altanoda utca 13-15, H-1053, Budapest, Hungary}
\email{zlangi@math.bme.hu}
\address{Jeff Ledford, Department of Mathematics \& Computer Science, Longwood University,
201 High Street, Farmville, VA 23909, U.S.A.}
\email{ledfordjp@longwood.edu}
\thanks{Partially supported by the ERC Advanced Grant ``ERMiD'', and the NKFIH grant K147544.}
\begin{document}

\setcounter{footnote}{0}

\begin{abstract}\noindent
The aim of this paper is to introduce a generalization of Steiner symmetrization in Euclidean space for spherical space, which is the dual of the Steiner symmetrization in hyperbolic space introduced by J. Schneider (Manuscripta Math. 60: 437–461 (1988)). We show that this symmetrization preserves volume in every dimension, and convexity in the spherical plane, but not in dimensions $n > 2$. In addition, we investigate the monotonicity properties of the perimeter and diameter of a set under this process, and find conditions under which the image of a spherically convex disk under a suitable sequence of Steiner symmetrizations converges to a spherical cap. We apply our results to prove a spherical analogue of a theorem of Sas, and to confirm a conjecture of Besau and Werner (Adv. Math. 301: 867-901, 2016) for centrally symmetric spherically convex disks. Lastly, we prove a spherical variant of a theorem of Winternitz.

\end{abstract}

\renewcommand{\thefootnote}{}

\subjclass[2020]{52A55 (52A40, 28A75, 53A35)}
\keywords{Spherical convex body, Steiner symmetrization, inscribed polytope, theorem of Macbeath, theorem of Sas, floating area}
\renewcommand{\thefootnote}{\arabic{footnote}}
\setcounter{footnote}{0}

\maketitle

\section{Introduction}

Steiner symmetrization of convex bodies in the $n$-dimensional Euclidean space $\R^n$ is a powerful tool to solve geometric optimization problems. It has many desirable properties which lend to its usefulness. In particular, it preserves volume and is nonincreasing for the other quermassintegrals as well as the diameter, the symmetrization of a convex body is a convex body, and for any convex body there exists a sequence of Steiner symmetrizations that converges to a Euclidean ball. These properties allow one to use the Steiner symmetrization process to prove many fundamental results in convex geometry, including the isoperimetric, isodiametric and Brunn--Minkowski inequalities. For background on the geometric aspects of Steiner symmetrizations of convex bodies, we refer the reader to, for example, the books of Gruber \cite{GruberBook} and Schneider \cite{SchneiderBook}.

Due to its usefulness, there have been several attempts to generalize the concept of Steiner symmetrization to other spaces of constant curvature. In particular,
in 1985 B\"or\"oczky \cite{Boroczky-1985} introduced a variant of Steiner symmetrization in the spherical space $\Sp$, which increases volume but does not preserve convexity. In 1988, J. Schneider \cite{Schneider-1988}  defined a different variant of this concept in the hyperbolic space $\HH^n$ which preserves volume, but the Steiner symmetral of a convex body is in general not convex. These symmetrizations generalize the classical Euclidean concept in different ways. In 2008, 
Leichtweiss \cite{LW2008} in a short paper generalized Schneider's symmetrization to both the spherical and hyperbolic planes, and found a certain natural condition under which it preserves convexity. For more information on the symmetrizations in \cite{Boroczky-1985, Peyerimhoff}, the interested reader is referred to \cite{Basit-Langi-2022}. Finally, we mention a recent manuscript of Lin and Wu \cite{LW2024} in which they use a rescaled Euclidean Steiner symmetrization in a certain model space of the spherical or hyperbolic space.

In this paper, we introduce and study a variant of Steiner symmetrization on $\Sp$. Our variant coincides with that of Leichtweiss \cite{LW2008} for the planar case $n=2$. We show that it preserves volume, and on $\Sph^2$, on the family of spherically convex disks, Steiner symmetrizations satisfying certain additional properties preserve convexity. We investigate the properties of this symmetrization process, and as an application, we prove 
a theorem of Sas \cite{Sas-1939} about the area of polygons inscribed in a convex disk, and a conjecture of Besau and Werner \cite{Besau-Werner} about the floating area of convex bodies in $\Sp$, for centrally symmetric convex disks. Finally, we prove a spherical version of the classical Winternitz theorem.

The structure of the paper is as follows. In Section~\ref{sec:Steiner}, we introduce our new symmetrization process, and we investigate its properties. More specifically, in Subsection~\ref{subsec:prelim} we give a brief overview of the properties of spherical geometry that we are going to use in the paper, and we define our symmetrization on $\Sp$. In Subsection~\ref{subsec:volume} we show that spherical volume does not change under symmetrization in $\Sp$. In Subsection~\ref{subsec:convexity}, we prove that if $K$ is a spherically convex disk in $\Sph^2$ satisfying the so-called \emph{angular monotonicity property} with respect to the symmetrization, then its image is spherically convex.  We also show that this result does not hold in $\Sp$ for any $n > 2$. In Subsections~\ref{subsec:perimeter} and \ref{subsec:diameter}, we show that the perimeter and the diameter of a convex disk in $\Sph^2$ do not increase under a Steiner symmetrization. Finally, in Subsection~\ref{subsec:convergence} we prove that if $K$ is a centrally symmetric convex disk in $\Sph^2$, or its diameter is less than $\frac{\pi}{2}$, then applying subsequent Steiner symmetrizations on $K$ a spherical cap of the same area can be approximated arbitrarily well in the Hausdorff metric.

In Section~\ref{sec:appl} we deal with applications of our symmetrization process. In particular, in Subsection~\ref{subsec:Sas} we prove that for any $N \geq 3$ and $0< A < 2 \pi$, among centrally symmetric convex disks $K$ of area $A$, the area of a maximum area convex $N$-gon contained in $K$ is minimal if $K$ is a spherical cap. This result is a variant of a theorem of Sas \cite{Sas-1939}, proving the same property of Euclidean circular disks in $\R^2$ in the family of convex disks. In addition, in Subsection~\ref{subsec:floating} we verify a conjecture of Besau and Werner \cite{Besau-Werner} for centrally symmetric convex disks, about an isoperimetric property of spherical balls regarding their floating areas.

The final section contains a spherical variant of a theorem of Winternitz about the area ratio of two regions obtained by intersecting a spherically convex disk through its centroid. The proof of this statement, even though it does not use Steiner symmetrization directly, relies on the tools used in Section~\ref{sec:Steiner}.


\section{Steiner symmetrization on the sphere}\label{sec:Steiner}

\subsection{Preliminaries}\label{subsec:prelim}

This paper investigates problems in the $n$-dimensional spherical space $\mathbb{S}^n$, which we regard as the unit sphere in the $(n+1)$-dimensional Euclidean space $\R^{n+1}$, centered at the origin $o$. More specifically, if $\langle \cdot, \cdot \rangle $ denotes the standard inner product of $\R^{n+1}$ and $\| x \| = \sqrt{\langle x,x\rangle}$ denotes the Euclidean norm of $x \in \R^{n+1}$, then we set $\Sp = \{ x \in \R^{n+1} : \| x \| = 1 \}$.
The \emph{spherical distance} of points $x,y \in \Sp$ is defined as
\[
d_s(x,y) = \arccos \left( \langle x,y\rangle \right).
\]
Here and throughout the paper, unless we state  otherwise, by ``distance" we always mean ``spherical distance".
Points of $\Sp$ at distance $\pi$ are called \emph{antipodal}. A pair of points is antipodal if it is the intersection of $\Sp$ and a line in $\R^{n+1}$ through $o$. More generally, the intersection of $\Sp$ with a $(k+1)$-dimensional linear subspace of $\R^{n+1}$, where $k \geq 0$, is called a \emph{$k$-dimensional subspace} or \emph{great sphere} of $\Sp$. We call $1$-dimensional great spheres \emph{great circles}, or \emph{(spherical) lines}. For any distinct and nonantipodal points $x,y \in \Sph^n$, there is a unique line containing them, which they decompose into two closed noncongruent arcs; the shorter such arc, denoted by $[x,y]_s$, is called the \emph{segment} with endpoints $x,y$. 
For any $0 < r < \pi$ and $x \in \Sp$, the set $B_s(x,r)=\{y\in\Sp:\, d_s(x,y)\leq r\}$ is called the \emph{closed (spherical) ball} of radius $r$ and center $x$. The (relative) interior of a closed ball is called an \emph{open ball}. Closed/open balls with $r=\pi/2$ are called closed/open \emph{hemispheres}.  
The (relative) boundary of a set $S\subset\mathbb{S}^n$ is denoted $\bd(S)$.

A set $K\subset\Sp$ is called  \emph{(spherically) convex} if it is contained in an open hemisphere, and for every $x,y\in K$, the segment $[x,y]_s$ is contained in $K$. The intersection of convex sets in $\Sp$ is convex. In particular, for any set $A\subset\Sp$ contained in an open hemisphere, the intersection of all convex sets containing $A$ is convex; this set is called the \emph{(spherical) convex hull} of $A$, denoted by $\conv_s (A)$. 
If $A \subset \Sp$ is a finite set contained in an open hemisphere, the set $P=\conv_s(A)$ is called a \emph{(spherical) polytope}. In this case, if $A$ satisfies the condition that $\conv_s (B) \neq P$ for any $B \subsetneq A$, then $A$ is called a \emph{minimal representation} of $P$. Every polytope has a unique minimal representation, and its elements are called the \emph{vertices} of $P$.

A compact, spherically convex set with nonempty interior is called a \emph{(spherical) convex body}. The $n$-dimensional Hausdorff measure  of a spherical convex body $K$ is called its \emph{(spherical) volume}, denoted by $\vol_s(K)$. In the case $n=2$, we write $\area(K)$ for $\vol_s(K)$.

In the following, we fix some open hemisphere $S$ of $\Sp$ with center $c$, and denote the family of all convex bodies of $\Sp$ contained in $S$ by $\K_S$.
Let $\bar{L}$ and $\bar{H}$ be a great circle and an $(n-1)$-dimensional great sphere, respectively, meeting orthogonally at $c$, and set $L=\bar{L} \cap S$, $H = \bar{H} \cap S$. For any $2$-dimensional great sphere $\bar{G}$ through $\bar{L}$ and $0 < \delta < \frac{\pi}{2}$, the set of points of $G$ at spherical distance $\delta > 0$ from $\bar{L}$ consists of two circles $\bar{C}_1, \bar{C}_2$ of spherical radius $\frac{\pi}{2}-\delta$, each concentric with one of the closed hemispheres in $\bar{G}$ bounded by $\bar{L}$. We call the curves $C_i =\bar{C}_i \cap S$, $i=1,2$, \emph{distance curves} with axis $L$ and distance $\delta$. We regard $L$ as the only distance curve with axis $L$ and distance $0$, and remark that any point $x \in S$ belongs to a unique distance curve with axis $L$. For later use, we denote the set of points of $\bar{H}$ at spherical distance $\frac{\pi}{2}$ from $L$ by $H_0$; this set is an $(n-2)$-dimensional great sphere of $\Sp$, contained in $\bd (S)$.

Our main definition is the following.

\begin{definition}\label{defn:Steiner}
Let $X \subset S$ be compact. Assume that for any distance curve $C$ with axis $L$, the set $C \cap X$ is connected. Then the \emph{Steiner symmetral} of $X$ with respect to $(L,H)$, denoted by $\sigma_{L,H}(X)$, is defined as follows. For any distance curve $C$ with axis $L$:
\begin{enumerate}
\item[(i)] if $X \cap C = \emptyset$, then $\sigma_{L,H}(X) \cap C=\emptyset$;
\item[(ii)] if $X \cap C$ is a singleton, then  $\sigma_{L,H}(X) \cap C = C \cap H$;
\item[(iii)] if $X \cap C$ is a closed circular arc, then $\sigma_{L,H}(X) \cap C$ is the unique closed circular arc in $S$ with the same length, centered at $C \cap H$.
\end{enumerate}
\end{definition}

Motivated by Definition~\ref{defn:Steiner}, from now on, if a set $X \subset S$ satisfies the property that for any distance curve $C$ with axis $L$ the set $X \cap C$ is connected, then we say that $X$ satisfies the \emph{connectedness property}.

\begin{remark}\label{rem:basicproperties}
By construction, $\sigma_{L,H}(K)$ is symmetric with respect to $H$ and is invariant on $H$-symmetric sets in $S$. It is also monotonic, i.e., if the compact sets $K \subseteq L$ satisfy the connectedness property, then $\sigma_{L,H}(K) \subseteq \sigma_{L,H}(L)$. Furthermore, Steiner symmetrization is idempotent, meaning that $\sigma_{L,H}(\sigma_{L,H}(K))=\sigma_{L,H}(K)$ holds for any compact set $K$ with the connectedness property. 
\end{remark}

\begin{remark}\label{rem:validity}
We note that any spherical ball satisfies the connectedness property. On the other hand, this is not true in general for every spherically convex body in $S$. To provide such an example for $n=2$, we may take a spherical polygon not intersecting $L$ which has more than one vertex closest to $L$. For $n > 2$, a straightforward modification of this example shows this observation.
\end{remark}

\begin{remark}\label{rem:validity2}
An elementary observation shows that if $K \subset S$ is a convex body with $c \in K$, or more generally, if $K \cap L \neq \emptyset$, then $K$ satisfies the connectedness property. Indeed, if $q_1,q_2 \in K \cap C$, then by convexity, the spherical triangle with vertices $q_1,q_2,c$ contains the arc of $C$ between $q_1$ and $q_2$, and thus the set $C \cap K$ is connected.
\end{remark}

\begin{remark}
    Several months after our paper was submitted, Lin and Deng submitted the  paper \cite{Lin-Deng-2024} in which they also defined the Steiner symmetral of a set following the same idea as in Definition~\ref{defn:Steiner} of our paper (without giving a reference to it). They derived some of its elementary properties, most of which are already investigated in this paper.
\end{remark}

\begin{definition}\label{defn:projection}
For any set $X \subset S$, the \emph{projection of $X$ onto $H$ along $L$} is the set of points $q \in H$ with the property that the distance curve through $q$ with axis $L$ intersects $X$. We denote this set by $\proj_{L,H}(X)$.
\end{definition}

In all parts of the paper, apart from Subsection~\ref{subsec:volume} and Subsubsection~\ref{subsubsec:highdimconv}, we deal with the spherical plane $\Sph^2$. In all of these parts, we use the following coordinate system on $\R^3$: $c=(1,0,0)$, $L$ lies in the $(x,y)$-plane, and $H$ lies in the $(x,z)$-plane. We use the normal polar coordinates for any point $p \in S$, namely $\theta_p$ denotes the (signed) spherical distance of $p$ from $L$, and $\varphi_p$ denotes the (signed) spherical distance of the orthogonal projection of $p$ onto $L$ from $H$, implying also that $-\frac{\pi}{2} < \theta_p, \varphi_p < \frac{\pi}{2}$. We call the points $(0,0,1)$ and $(0,0,-1)$ the \emph{poles} of $\Sph^2$, and for any $-\frac{\pi}{2} < \theta < \frac{\pi}{2}$, we denote the distance curve $\{ p : \theta_p = \theta \}$ by $C_{\theta}$.

\subsection{Volume of the Steiner symmetral of a set}\label{subsec:volume}

\begin{theorem}\label{thm:volume}
For any compact set $X \subset S$ with the connectedness property, $\vol_s(X)=\vol_s(\sigma_{L,H}(X))$.
\end{theorem}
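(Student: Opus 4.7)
The plan is to reduce the statement to a Fubini calculation once suitable coordinates on $S$ are introduced so that the distance curves with axis $L$ become coordinate curves. Choose coordinates on $\R^{n+1}$ so that $\bar{L}$ lies in the span of the first two coordinates and $\bar{H}$ is cut out by $x_2=0$, and parameterize a point $p\in S$ as
\[
p(\theta,\varphi,u) \;=\; (\cos\theta\cos\varphi,\;\cos\theta\sin\varphi,\;\sin\theta\cdot u),
\]
where $\theta\in[0,\pi/2)$ is the spherical distance from $\bar{L}$, $\varphi\in(-\pi/2,\pi/2)$ is the signed spherical distance of the projection to $L$ from $H$, and $u\in\mathbb{S}^{n-2}$ indexes the $2$-dimensional great sphere through $\bar{L}$ containing $p$. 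A short computation shows that the partial derivatives $\partial_\theta p,\,\partial_\varphi p,\,\partial_u p$ are mutually orthogonal with $|\partial_\theta p|=1$, $|\partial_\varphi p|=\cos\theta$, and the induced metric on the $u$-factor is $\sin^2\theta$ times the standard metric on $\mathbb{S}^{n-2}$. Consequently the spherical volume element is
\[
\dint V \;=\; \cos\theta\,\sin^{n-2}\theta \,\dint\theta\,\dint\varphi\,\dint\sigma_{n-2}(u),
\]
where $\dint\sigma_{n-2}$ is the standard measure on $\mathbb{S}^{n-2}$ (for $n=2$ the $u$-factor is absent and $\sin^{n-2}\theta\equiv 1$).

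In these coordinates, a distance curve $C$ with axis $L$ at distance $\theta$ in the great sphere indexed by $u$ is precisely the curve $\varphi\mapsto p(\theta,\varphi,u)$, and the spherical arc-length element along $C$ equals $\cos\theta\,\dint\varphi$. The set $H_0$ and the axis $L$ itself, where this parameterization degenerates, have measure zero and can be ignored. Write
\[
\ell_X(\theta,u) \;=\; \int \mathbf{1}_X(\theta,\varphi,u)\,\cos\theta\,\dint\varphi
\]
for the spherical length of $X\cap C$; the connectedness property guarantees that $X\cap C$ is a closed arc (or empty or a point), so $\ell_X(\theta,u)$ coincides with the $1$-dimensional Hausdorff measure of $X\cap C$ and is measurable in $(\theta,u)$.

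Now apply Fubini:
\[
\vol_s(X) \;=\; \int \!\!\int \sin^{n-2}\theta\,\ell_X(\theta,u)\,\dint\theta\,\dint\sigma_{n-2}(u).
\]
By Definition~\ref{defn:Steiner}, $\sigma_{L,H}(X)\cap C$ is a closed arc of $C$ (centered at $C\cap H$) of the same length as $X\cap C$ in cases (ii)–(iii), and is empty when $X\cap C=\emptyset$. Hence $\ell_{\sigma_{L,H}(X)}(\theta,u)=\ell_X(\theta,u)$ for almost every $(\theta,u)$, and substituting this into the displayed formula yields $\vol_s(\sigma_{L,H}(X))=\vol_s(X)$. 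The only technical point is verifying the form of the volume element and checking that the measurability needed for Fubini is not lost under the symmetrization; both are routine, so I do not expect a genuine obstacle in this proof.
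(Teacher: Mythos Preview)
Your proof is correct and follows essentially the same approach as the paper: introduce coordinates $(\theta,\varphi,u)$ adapted to the distance curves, derive the spherical volume element $\cos\theta\,\sin^{n-2}\theta\,\dint\theta\,\dint\varphi\,\dint\sigma_{n-2}(u)$, and apply Fubini to write $\vol_s(X)$ as an integral of the arc lengths $\ell_X(\theta,u)$, which are preserved by $\sigma_{L,H}$. The only difference is that you read off the volume element from the orthogonality and norms of the coordinate vector fields, whereas the paper arrives at the same formula by expanding the Jacobian determinant of the map $(r,\theta,\varphi,q_2)\mapsto x$ explicitly---a stylistic rather than substantive distinction.
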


\begin{proof}
Let $L_1$ denote the linear hull of $\bar{L}$, and let $L_2$ be the orthogonal complement of $L_1$ in $\R^{n+1}$. Note that  $H_0=L_2 \cap \Sp$ is the set of points of $\bar{H}$ at spherical distance $\frac{\pi}{2}$ from $\bar{L}$, and thus, it is a subset of $\bd(S)$.
Clearly, any point $p \in \R^{n+1}$ can be uniquely written as $p=p_1+p_2$ with $p_i \in L_i$ for $i=1,2$, implying that any point $q \in \Sp$ can be written as $q = (\cos \theta) q_1 + (\sin \theta) q_2$, where $q_1 \in \bar{L}$, $q_2 \in H_0$, and $0 \leq \theta \leq \frac{\pi}{2}$. Furthermore, this representation is unique if $\theta \neq 0, \frac{\pi}{2}$, and the property that $q \in S$ is equivalent to the condition that $q_1 \in L$.

In the following, we use a Descartes coordinate system of $\R^{n+1}$ in which the plane spanned by the first two basis vectors is $L_1$, and the $(n-1)$-dimensional linear subspace spanned by the remaining basis vectors is $L_2$. 
We write each point $x=(x_1,\ldots,x_{n+1}) \in \R^{n+1}$ as $x= (r\cos \theta\cos \phi, r\cos \theta\sin\phi, (r\sin\theta)q_2)$, where $r \geq 0$, $0 \leq \theta \leq \frac{\pi}{2}$, $-\pi \leq \phi \leq \pi$, and $q_2=(q_{2,1},\ldots,q_{2,n-1})$. The Jacobian determinant $\det(J)$ of this coordinate transformation on $\R^{n+1}$ can be expressed as
\begin{align*}
    \det (J) &= \det\left(\frac{\partial(x_1,\ldots,x_{n+1})}{\partial(r,\theta,\phi,q_{2,1},\ldots,q_{2,n-1})}\right)\\
   & =\det \left( \begin{array}{ccc|ccc}
    \cos\theta\cos\phi & -r\sin\theta\cos\phi & -r\cos\theta\sin\phi & 0  &\cdots & 0 \\
    \cos\theta\sin\phi & -r\sin\theta \sin\phi & r\cos\theta\cos\phi & 0 & \cdots & 0 \\ \hline
    (\sin\theta)q_{2,1}& (r\cos \theta)q_{2,1} & 0 &&& \\
    \vdots & \vdots & \vdots &   & (r\sin\theta)D(q_2) &  \\
    (\sin\theta)q_{2,n-2}& (r\cos \theta)q_{2,n-2} & 0 &&&
\end{array} \right),
\end{align*}
where $D(q_2)$ is the $(n-1) \times (n-2)$ derivative matrix of $q_{2}$. From this we obtain 
\begin{align*}
      \det(J)& =(r^{n}\sin^{n-2}\theta)\det \left( \begin{array}{ccc|ccc}
    \cos\theta\cos\phi & -\sin\theta\cos\phi & -\cos\theta\sin\phi & 0  &\cdots & 0 \\
    \cos\theta\sin\phi & -\sin\theta \sin\phi & \cos\theta\cos\phi & 0 & \cdots & 0 \\ \hline
    (\sin\theta)q_{2,1}& (\cos \theta)q_{2,1} & 0 &&& \\
    \vdots & \vdots & \vdots &   & D(q_2) &  \\
    (\sin\theta)q_{2,n-1}& (\cos \theta)q_{2,n-1} & 0 &&&
\end{array} \right)\\
&=-(r^{n}\sin^{n-2}\theta\cos\theta\sin\phi)\det \left( \begin{array}{cc|ccc}
    \cos\theta\sin\phi & -\sin\theta \sin\phi  & 0 & \cdots & 0 \\ \hline
    (\sin\theta)q_{2,1}& (\cos \theta)q_{2,1}  &&& \\
    \vdots & \vdots  &   & D(q_2) &  \\
    (\sin\theta)q_{2,n-1}& (\cos \theta)q_{2,n-1}  &&&
\end{array} \right)\\
& -(r^{n}\sin^{n-2}\theta\cos\theta\cos\phi)\det \left( \begin{array}{cc|ccc}
    \cos\theta\cos\phi & -\sin\theta \cos\phi  & 0 & \cdots & 0 \\ \hline
    (\sin\theta)q_{2,1}& (\cos \theta)q_{2,1}  &&& \\
    \vdots & \vdots  &   & D(q_2) &  \\
    (\sin\theta)q_{2,n-2}& (\cos \theta)q_{2,n-2}  &&&
\end{array} \right)\\
&=-(r^{n}\sin^{n-2}\theta\cos\theta\sin\phi)\bigg((\cos\theta\sin\phi)\det\left((\cos\theta)q_2^T;D(q_2)\right)\\
&\qquad\qquad\qquad\qquad\qquad\qquad+(\sin\theta\sin\phi)\det\left((\sin\theta)q_2^T;D(q_2)\right)\bigg)\\
&\phantom{=}-(r^{n}\sin^{n-2}\theta\cos\theta\cos\phi)\bigg((\cos\theta\cos\phi)\det\left((\cos\theta)q_2^T;D(q_2)\right)\\
&\qquad\qquad\qquad\qquad\qquad\qquad+(\sin\theta\cos\phi)\det\left((\sin\theta)q_2^T;D(q_2)\right)\bigg)\\
&=-(r^{n}\sin^{n-2}\theta\cos\theta)\det\left(q_2^T;D(q_2)\right).
\end{align*}

Here we remark that $\det\left(q_2^T;D(q_2)\right)dq_2$ is the (spherical) volume element of $H_0$, identified with $\mathbb{S}^{n-3}$. Thus, by Fubini's theorem, the spherical volume of $X \subset S$ is
\begin{equation}\label{eq:volume}
\vol_s(X) = \int_{0}^{\pi/2} \int_{q_{2} \in H_0} l(X \cap C(\theta, q_2)) \, \sin ^{n-2} \theta \cos \theta \, d q_2 \, d \theta,
\end{equation}
where $l(\cdot)$ denotes spherical arclength, and $C(\theta,q_2)$ is the distance curve through $q_2$ with axis $L$ and distance $\theta$;
this curve is the semicircle 
\[
(\cos\theta\cos\varphi, \cos\theta\sin\varphi, (\sin \theta)q_2), \quad - \frac{\pi}{2} < \varphi < \frac{\pi}{2}.
\]
Note that the expression in (\ref{eq:volume}) does not change under the Steiner symmetrization $\sigma_{L,H}$, implying the assertion.
\end{proof}

\begin{remark}
The planar version of this result may be found in \cite{LW2008}.     
\end{remark}

\subsection{Convexity of the Steiner symmetral}\label{subsec:convexity}

\subsubsection{Convexity on $\Sph^2$}\label{subsubsec:planarconv}

In this subsection, we use the coordinate system introduced in the last paragraph of Subsection~\ref{subsec:prelim}.

\begin{definition}\label{defn:anglength}
Let $C \subset S$ be an arc of a distance curve with axis $L$. Let $p,q$ denote the endpoints of $C$. Then the quantity $|\varphi_p-\varphi_q|$ is called the \emph{angular length} of $C$.
\end{definition}

\begin{definition}\label{defn:monotonicity}
Let $K \in \K_S$ satisfy the connectedness property. Assume that the angular length of $K \cap C_{\theta}$ is nonincreasing for $\theta \geq 0$, and nondecreasing for $\theta \leq 0$. Then we say that $K$ satisfies the \emph{angular monotonicity property}.
\end{definition}

\begin{lemma}\label{lem:monotonicity}
Let $K \in \K_S$ satisfy the connectedness property. Then $K$ satisfies the angular monotonicity property if and only if $K$ intersects $L$ in a spherical segment $[q_1,q_2]_s$, and $K$ has a pair of supporting lines $L_1,L_2$ at $q_1,q_2$, respectively, such that $L_1,L_2$ are symmetric to the midpoint of $[q_1,q_2]_s$.
\end{lemma}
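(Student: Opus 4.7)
The plan is to prove the equivalence by separate, largely local arguments at $\theta=0$, with a convex-hull comparison added for the backward direction; throughout, write $\ell(\theta):=\varphi_+(\theta)-\varphi_-(\theta)$. For the forward direction, angular monotonicity makes $\ell$ maximal at $\theta=0$, and since $\vol_s(K)>0$ this maximum is strictly positive, so by convexity $K\cap L$ is a nondegenerate segment $[q_1,q_2]_s$. Let $\beta_i^+\in(0,\pi)$ be the angle the tangent at $q_i$ of the ``upper'' branch of $\bd K$ makes with $+e_\varphi$ in the normal orthonormal frame; comparing the tangent vector $e_\theta+\varphi'(0^+)e_\varphi$ of that branch with the unit vector $(\cos\beta_i^+,\sin\beta_i^+)$ yields $\varphi_+'(0^+)=\cot\beta_2^+$ and $\varphi_-'(0^+)=\cot\beta_1^+$, so $\ell'(0^+)\leq 0$ becomes $\cot\beta_2^+\leq\cot\beta_1^+$, i.e.\ $\beta_2^+\geq\beta_1^+$ since $\cot$ is decreasing on $(0,\pi)$; symmetrically $\beta_2^-\leq\beta_1^-$. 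The support cone at $q_i$, viewed as unoriented line angles in $[0,\pi)$, is precisely $[\beta_i^-+\pi,\beta_i^+]$, and these inequalities force the two intervals to intersect in some $\alpha$. A quick check using the coordinate formula $R(\varphi,\theta)=(2\mu-\varphi,-\theta)$ for the rotation $R$ by $\pi$ about $om$ shows that $R$ sends $(e_\varphi,e_\theta)|_{q_1}$ to $(-e_\varphi,-e_\theta)|_{q_2}$; hence the supporting line at $q_1$ of tangent angle $\alpha$ is mapped to the supporting line at $q_2$ of the same tangent angle, giving the required symmetry of $L_1,L_2$ with respect to $m$.

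For the backward direction, I first establish $\ell(\theta)\leq\ell(0)$. Let $s_i(\theta)$ be the $\varphi$-deviation of $L_i\cap C_\theta$ from $\varphi_i$; parametrizing $L_2$ by arclength from $q_2$ and solving $z=\sin\theta$ produces the odd function
\[
s_2(\theta)=\arctan\bigl(\sin\theta\cos\alpha\,/\sqrt{\sin^2\alpha-\sin^2\theta}\bigr),
\]
and combined with $R(L_2)=L_1$ and the coordinate formula for $R$ this gives $s_1(\theta)=-s_2(-\theta)=s_2(\theta)$. Since $K$ is contained in the closed half-hemispheres $H_i^+$ cut by $L_i$, one has $\varphi_+(\theta)\leq\varphi_2+s_2(\theta)$ and $\varphi_-(\theta)\geq\varphi_1+s_2(\theta)$, and subtracting gives $\ell(\theta)\leq\ell(0)$.

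To upgrade this bound to monotonicity on $[0,\theta_{\max}]$, fix $0\leq\theta_1<\theta_2\leq\theta_{\max}$, set $p_\pm:=(\varphi_\pm(\theta_2),\theta_2)$, and form the spherical trapezoid $T:=\conv_s\{q_1,q_2,p_-,p_+\}\subseteq K$. Expanding the vanishing inner product of a generic point on the great circle through $(\varphi_a,0)$ and $(\varphi_b,\theta_2)$ with the plane's normal gives the side-geodesic identity $\sin(\varphi-\varphi_a)=\sin(\varphi_b-\varphi_a)\tan\theta/\tan\theta_2$, so the slice $T\cap C_{\theta_1}$ is an arc of angular length
\[
\ell_T(\theta_1)=\ell(0)+\arcsin(\sin A'\cdot\tau)-\arcsin(\sin B'\cdot\tau),\qquad \tau:=\tan\theta_1/\tan\theta_2\in[0,1],
\]
where $A':=\varphi_+(\theta_2)-\varphi_2$, $B':=\varphi_-(\theta_2)-\varphi_1$, and $A'-B'=\ell(\theta_2)-\ell(0)\leq 0$ by the previous step. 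The decisive observation is that $F_\tau(x):=\arcsin(\sin x\cdot\tau)-x$ satisfies
\[
F_\tau'(x)=\frac{\tau\cos x}{\sqrt{1-\tau^2\sin^2 x}}-1\leq 0
\]
for every $\tau\in[0,1]$ (the inequality reduces to $\tau^2(\cos^2 x+\sin^2 x)\leq 1$), so $F_\tau$ is nonincreasing in $x$. Applying this to $A'\leq B'$ gives $F_\tau(A')\geq F_\tau(B')$, i.e.\ $\ell_T(\theta_1)\geq\ell(\theta_2)$, and $\ell_K(\theta_1)\geq\ell_T(\theta_1)$ by $T\subseteq K$; a symmetric argument on $[-\theta_{\min},0]$ completes the proof. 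The main obstacle is the derivation of this slice identity for $T$ together with the monotonicity of $F_\tau$; once both are in hand, the chain $\ell_K(\theta_1)\geq\ell_T(\theta_1)\geq\ell_K(\theta_2)$ yields the angular monotonicity property at once.
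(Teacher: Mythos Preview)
Your proof is correct and takes a genuinely different route from the paper's, so a brief comparison is in order.

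For the forward direction, the paper simply says the argument is a ``straightforward modification'' of the backward one and gives no details; you instead give a direct local argument at $\theta=0$, reading off the one-sided derivatives $\varphi_\pm'(0^+)$ from the half-tangents of $\bd K$ at $q_1,q_2$ and showing the resulting inequalities $\beta_2^+\ge\beta_1^+$, $\beta_2^-\le\beta_1^-$ force the two support cones (as intervals of unoriented line directions) to overlap. That is a clean way to produce the common angle $\alpha$, and your check that the half-turn $R$ about $m$ preserves the tangent angle then gives the symmetric pair $L_1,L_2$. One small point worth stating explicitly: $\varphi_\pm'(0^\pm)$ exist because a convex boundary has one-sided tangents at every point, so the derivative comparison is legitimate even without smoothness.

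For the backward direction both proofs begin with the same observation, phrased differently: the paper notes that the pole-rotation $h$ sending $q_1$ to $q_2$ satisfies $h(L_1)=L_2$, while you compute $s_1(\theta)=s_2(\theta)$ via the midpoint rotation $R$; either way the lune bounded by $L_1,L_2$ has constant angular width $\ell(0)$, and since $K$ lies in that lune (and the lune has $\theta$-extent at most $\alpha$, so the inequality applies on all of $[0,\theta_{\max}]$) one gets $\ell(\theta)\le\ell(0)$. The genuine divergence is in upgrading this to monotonicity. The paper argues qualitatively, using nested boundary arcs $\Psi_\theta$ of $\bd K$ together with the rotated copies $L_i'$ of $L_1$; your approach is entirely computational: you inscribe the trapezoid $T=\conv_s\{q_1,q_2,p_-,p_+\}$, derive the geodesic-side identity $\sin(\varphi-\varphi_a)=\sin(\varphi_b-\varphi_a)\tan\theta/\tan\theta_2$, and reduce $\ell_T(\theta_1)\ge\ell(\theta_2)$ to the monotonicity of $F_\tau(x)=\arcsin(\tau\sin x)-x$. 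The paper's argument is shorter and more geometric; yours is longer but makes the role of the hypothesis $\ell(\theta_2)\le\ell(0)$ completely transparent (it is exactly what forces $A'\le B'$, without which the $F_\tau$ step would go the wrong way), and every step can be checked by direct computation.
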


By a \emph{lune}, we mean  the intersection of two closed hemispheres of $\Sp$, bounded by different great spheres. Clearly, no lune is contained in $S$ as they contain antipodal points. Thus, with a little abuse of terminology, in the remaining part of this section we call a lune the part, in $S$, of the intersection of two closed hemispheres. Before proving Lemma~\ref{lem:monotonicity}, we make the following observation.

\begin{remark}\label{rem:equivalence}
The condition of the lemma for $L_1,L_2$ is equivalent to the property that the midpoint of $[q_1,q_2]_s$ is the center of the lune in $S$ bounded by $L_1$ and $L_2$, and also to the property that if $z$ is any of the two intersection points of $L_1$ and $L_2$, then the sum of the angles of the triangle $\conv_s \{ q_1,q_2, z \}$ at $q_1$ and $q_2$ is equal to $\pi$.
\end{remark}

\begin{proof}[Proof of Lemma~\ref{lem:monotonicity}]
We only prove the `if' part of the statement, as the opposite direction can be shown by a straightforward modification of our argument.
 
Assume that $K$ intersects $L$ in a spherical segment $[q_1,q_2]_s$, and that it has a pair of supporting lines $L_1, L_2$ satisfying the condition in the lemma. Let us denote by $K'$ the lune (in $S$) bounded by $L_1$ and $L_2$. Let $h$ denote the rotation around the poles satisfying $h(q_1)=q_2$. Then by our conditions, $h(L_1)=L_2$. This shows that the angular length of an arc $C \cap K'$ is the same for any distance curve $C$ with axis $L$ that intersects both $L_1$ and $L_2$ in $S$. In particular, this implies Lemma~\ref{lem:monotonicity} for the special case that $K=K'$.

Now let $q_1', q_2' \in [q_1,q_2]_s$ be such that $q_1'$ is closer to $q_1$ than $q_2'$. Let $L_1'$ and $L_2'$ be the rotated copies of $L_1$ by the rotation around the poles that maps $q_1$ into $q_1'$ and $q_2'$, respectively. By the definition of spherical convexity, for $i=1,2$, $L_i'$ decomposes $\bd(K)$ into two connected arcs $\Gamma_1$ and $\Gamma_2$, exactly one of them containing $q_1$. We denote this arc by $\Gamma_1$. Clearly, by our conditions, $\Gamma_1 \subseteq \Gamma_2$.

On the other hand, the set of points $p \in \bd(K)$ with $\theta_p \geq \theta $ is also a connected arc of $\bd(K)$, which we denote by $\Psi_{\theta}$. Observe that the endpoints of $\Psi_{\theta}$ are the endpoints of the arc $C_{\theta} \cap K$. This shows that if $\theta_2 > \theta_1$, then $\Psi_{\theta_2} \subseteq \Psi_{\theta_1}$. Combining this with the observation in the previous paragraph, we obtain that if $\theta_2 > \theta_1 \geq 0$, then the angular length of $C_{\theta_1} \cap K$ is not less than that of $C_{\theta_2} \cap K$. If $0 \geq \theta_2 > \theta_1$, a similar observation yields the assertion.
\end{proof}

\begin{remark}\label{rem:centralsymmetry}
By Lemma~\ref{lem:monotonicity}, if $K \in \K_S$ is symmetric about  $c$, then $K$ clearly satisfies the angular monotonicity property.
\end{remark}

Before our next theorem, we remark that in a slightly different form, stated for curves with $C^{\infty}$-class boundaries, Theorem~\ref{thm:planarconvexity} appeared as the main result, Theorem 4.3, of \cite{LW2008}. The proof of that theorem is based on computing the geodesic curvatures at the boundary points of the symmetral $\sigma_{L,H}(K)$ of the convex disk $K$. Here we present a proof that uses only elementary geometry.

\begin{theorem}\label{thm:planarconvexity}
Assume that $K \in \K_S$ satisfies the angular monotonicity property. Then $\sigma_{L,H}(K)$ is convex.
\end{theorem}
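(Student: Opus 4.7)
My plan is to establish convexity of $\sigma_{L,H}(K)$ by reducing to a standardized configuration via Lemma~\ref{lem:monotonicity} and then verifying the convex-combination property through a direct spherical-trigonometric computation.

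\textbf{Setup and reduction.} First, invoke Lemma~\ref{lem:monotonicity} to obtain supporting lines $L_1, L_2$ of $K$ at the endpoints $q_1, q_2$ of $K \cap L$, symmetric about the midpoint $m$ of $[q_1, q_2]_s$. Since any rotation of $K$ about the poles of $\mathbb{S}^2$ leaves the angular length $f(\theta)$ of $K \cap C_\theta$ unchanged---and therefore leaves $\sigma_{L,H}(K)$ unchanged---I may assume $m = c$. For each $\theta$ with $K \cap C_\theta \neq \emptyset$, let $\beta(\theta)$ and $\alpha(\theta)$ denote the right and left $\varphi$-coordinates of $K \cap C_\theta$, so $f = \beta - \alpha$ and the right (respectively, left) boundary of $\sigma_{L,H}(K)$ is the curve $\varphi = f(\theta)/2$ (respectively, $\varphi = -f(\theta)/2$).

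\textbf{Boundary verification.} I would prove convexity by showing that for any two points $\tilde p_1, \tilde p_2 \in \bd(\sigma_{L,H}(K))$ the spherical segment $[\tilde p_1, \tilde p_2]_s$ lies in $\sigma_{L,H}(K)$. The nontrivial case is when both $\tilde p_i = (\theta_i, f(\theta_i)/2)$ lie on the right boundary; the opposite-boundary case follows by $H$-symmetry, and the case where $\tilde p_1, \tilde p_2$ lie on opposite sides of $L$ is reduced to the preceding one by splitting the analysis into the two halves $K \cap \{\theta \geq 0\}$ and $K \cap \{\theta \leq 0\}$, using that Steiner symmetrization commutes with this splitting. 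Using the great-circle equation, express the $\varphi$-coordinate $\tilde\varphi_0$ of $[\tilde p_1, \tilde p_2]_s$ at an intermediate $\theta_0$ as $\Phi(\theta_0;\theta_1, f(\theta_1)/2,\theta_2, f(\theta_2)/2)$, where $\Phi$ denotes the explicit function returning the $\varphi$-coordinate of a spherical segment at a prescribed intermediate height. The target is then $\tilde\varphi_0 \leq f(\theta_0)/2$.

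\textbf{The key inequality.} Convexity of $K$ applied to the right and left boundary arcs yields the companion inequalities
\[
\Phi(\theta_0;\theta_1,\beta(\theta_1),\theta_2,\beta(\theta_2)) \leq \beta(\theta_0), \qquad \Phi(\theta_0;\theta_1,\alpha(\theta_1),\theta_2,\alpha(\theta_2)) \geq \alpha(\theta_0).
\]
The crux is to combine these two inequalities with angular monotonicity of $f$ (which forces $\beta(\theta_2) - \beta(\theta_1) \leq \alpha(\theta_2) - \alpha(\theta_1)$ for $0 \leq \theta_1 \leq \theta_2$) so as to deduce the target inequality for $\tilde\varphi_0$. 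I expect this to be the main obstacle: the map $\Delta \mapsto \Phi(\theta_0;\theta_1, 0, \theta_2, \Delta)$ is odd but not linear, so the two convexity inequalities cannot simply be averaged; angular monotonicity must intervene precisely to control the ``mixed'' concavity/convexity behavior of $\Phi$ in its $\varphi$-arguments. Once this spherical-trigonometric lemma is in hand, the corner behavior at $q_1, q_2$ is checked directly using the symmetric pair $L_1, L_2$ from Lemma~\ref{lem:monotonicity}, and the proof concludes.
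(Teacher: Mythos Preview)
Your proposal correctly isolates the target inequality $\tilde\varphi_0 \le f(\theta_0)/2$ and the two constraints coming from the convexity of $K$, but it stops precisely at the point where the proof has to begin. You yourself flag the combination of the two inequalities for $\alpha$ and $\beta$ with angular monotonicity as ``the main obstacle'' and then proceed ``once this spherical-trigonometric lemma is in hand''---but that lemma \emph{is} the theorem. You give no mechanism by which it would follow: $\Phi$ is not linear in its $\varphi$-arguments (as you note), it is not obviously convex or concave in them either, and there is no indication of why the single additional hypothesis $f(\theta_2)\le f(\theta_1)$ should force the ``averaged'' inequality from the two one-sided ones. The case analysis around $H$-symmetry and splitting at $L$ is also a bit loose (the mixed case with one point on each boundary is not literally ``$H$-symmetry''), but that is secondary; the real gap is the missing core inequality.

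The paper avoids this direct trigonometric route entirely. It reduces to polygons, supposes $\sigma_{L,H}(P)$ nonconvex, and translates this into an \emph{area} inequality on a thin horizontal strip: $\area(T\cap P) < \area(Q)$, where $Q$ is bounded by the two symmetrized chord-arcs and the straight spherical segments joining their endpoints. Angular monotonicity is then used not as an analytic inequality but as a geometric shearing device: it lets one peel off a ``parallelogram'' piece and reduce to the degenerate case where the upper chord is a single point. At that stage the comparison becomes: among spherical triangles over a fixed base with apex on the distance curve $C_{\theta_2}$, which apex minimizes area? By the Lexell-circle description of equal-area loci, the minimizing apex is the $H$-symmetric one, which immediately contradicts $\area(T\cap P) < \area(Q)$. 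This area/Lexell argument is the substitute for your missing lemma; if you want to stick with your framework, you would need either a geodesic-curvature computation (as in Leichtweiss's original $C^\infty$ proof) or to import the Lexell comparison into your coordinates.
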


\begin{proof}
First, note that by Definition~\ref{defn:anglength}, $K$ satisfies the connectedness property. In the proof, for any $\theta_1 < \theta_2$, we call the connected region of $S$ bounded by the distance curves $C_{\theta_1}$ and $C_{\theta_2}$ a \emph{horizontal strip} of $S$. Note that by a standard continuity argument, it is sufficient to prove the statement for the case that $K$ is a convex polygon.

Suppose for contradiction that there is a polygon $P$ such that $\sigma_{L,H}(P)$ is not convex. By Lemma~\ref{lem:monotonicity}, we may assume that $P$ satisfies the angular monotonicity property.
For any distance curve $C_{\theta}$, let us denote the endpoints of $C_{\theta} \cap \sigma_{L,H}(P)$ by $p^-(\theta)$ and $p^+(\theta)$ such that the $\varphi$-coordinate of $p^-(\theta)$ is less than or equal to that of $p^+(\theta)$.

Note that if $\sigma_{L,H}(P)$ is not convex, then there is a horizontal strip $T$, bounded by the distance curves $C_{\theta_1}$ and $C_{\theta_2}$, where $\theta_2 > \theta_1$ can be assumed to be nonnegative and are sufficiently close, such that:
\begin{enumerate}
\item[(i)] $T \cap \bd(P)$ consists of two segments, and
\item[(ii)] we have $\area(T \cap P) < \area(Q)$, where $Q$ is the connected region bounded by $\sigma_{L,H}(C_{\theta_1} \cap P)$, $\sigma_{L,H}(C_{\theta_2} \cap P)$, and the two segments $[p^-(\theta_1), p^-(\theta_2)]_s$ and $[p^+(\theta_1),p^+(\theta_2)]_s$.
\end{enumerate}

For $i=1,2$, let $\varphi_i$ denote the angular length of $C_{\theta_i} \cap P$. Then by the angular monotonicity property, $\varphi_2 \leq \varphi_1$. If $\varphi_2 = \varphi_1$, then the segments in $H \cap \bd(P)$ are rotated copies of each other by $\varphi_2$ around the poles. Since Steiner symmetrization does not change the angular length of $C_{\theta} \cap P$ for any value of $\theta$, in this case $Q$ coincides with $\sigma_{L,H}(T \cap P)$, and Theorem~\ref{thm:volume} yields the assertion.

\begin{figure}[ht]
\begin{center}
\includegraphics[width=0.8\textwidth]{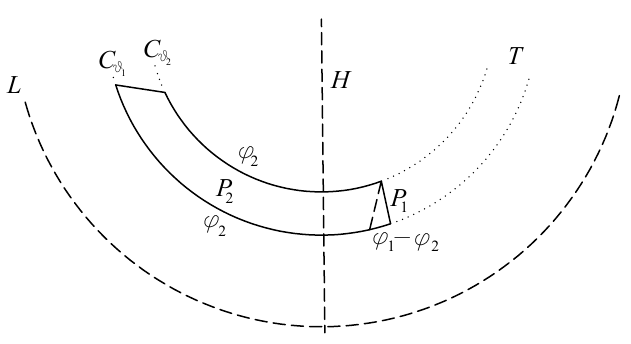}
\caption{An illustration for the dissection of $P$ in the proof of Theorem~\ref{thm:planarconvexity}. The continuous line in the figure indicates $\bd (P)$. The dashed segment in $P$, which is a rotated copy of the left segment in $\bd(P)$ in the figure around the poles, decomposes $P$ into the regions $P_1, P_2$. The symbols $\varphi_2, \varphi_1-\varphi_2$ show the angular lengths of the corresponding distance curve arcs in the boundaries of $P_1$ and $P_2$.}
\label{fig:dissection}
\end{center}
\end{figure}

Assume that $\varphi_2 < \varphi_1$. Then the rotated copy of one component of $T \cap \bd(P)$ by $\varphi_2$ towards the other component decomposes $T \cap P$ into 
two components $P_1$ and $P_2$ such that $P_1$ contains exactly one point of $C_{\theta_2} \cap P$ and an arc of angular length $\varphi_1-\varphi_2$ of $C_{\theta_1} \cap P$, and $P_2$ contains two arcs of angular length $\varphi_2$ of $C_{\theta_i} \cap P$ for $i=1,2$ (see Figure~\ref{fig:dissection}). Note that we can obtain the Steiner symmetral of $T \cap P$ by taking the Steiner symmetral of $P_2$, and rotating both curves in its boundary, connecting the intersection point of $H$ and $C_{\theta_2}$ to a point of $C_{\theta_1}$, by $\frac{\varphi_2}{2}$ around the poles and away from $H$. In other words, we may assume that $\varphi_2=0$, implying that $p^-(\theta_2)= p^-(\theta_2)$ is the intersection point of $H$ and $C_{\theta_2}$. Let us denote this common point by $p(\theta_2)$. Now, recall from spherical geometry that for any fixed segment $[x,y]_s$, the points $z$ with the property that the area of the triangle $\conv_s \{x,y,z\}$ is fixed, moves on a circle containing $-x,-y$. This circle is called a \emph{Lexell circle} of $[x,y]_s$. Observe that the Lexell circle of $[p^-(\theta_1),p^+(\theta_1)]_s$ intersecting $C_{\theta_2}$, belonging to smallest area triangles, passes through $p_{\theta_2}$. This contradicts the assumption that $\area(T \cap P) < \area(Q)$.
\end{proof}

\subsubsection{Convexity on $\Sp$ with $n > 2$}\label{subsubsec:highdimconv}

In this subsubsection, we investigate the convexity properties of the set $\sigma_{L,H}(K)$ where $K$ is spherically convex. During this, we often use the central projection of $S$ from $o$ onto the tangent hyperplane $T_c \Sp$ of $\Sp$ at the center $c$ of $S$. We denote this projection by $\pi_c : S \to T_c \Sp$. Recall that for any $X \subseteq S$, $\pi_c(X)$ is convex if and only if $X$ is spherically convex. To simplify our notation, for any $p \in S$, we set $p^*=\pi_c(p)$, and we use the same notation for subsets of $S$.

\begin{lemma}\label{lem:model}
Consider the distance curve $C_{\theta}$ of $S \subset \Sph^2$, where $- \frac{\pi}{2} < \theta < \frac{\pi}{2}$. If $\theta=0$, i.e., if $C_{\theta}=L$, then $\pi_c (C_{\theta})$ is the straight line
\[
z=0
\]
in the tangent plane $T_c \mathbb{S}^2= \{ x=1 \}$.
If $\theta > 0$, then $\pi_c (C_{\theta})$ is the branch of the hyperbola
\[
1 = \frac{z^2}{\tan^2 \theta} - y^2
\]
in $\{ x=1 \}$ with $z > 0$. If $\theta < 0$, then $\pi_c (C_{\theta})$ is the branch of the same hyperbola with $z < 0$.
\end{lemma}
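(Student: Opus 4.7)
The plan is to unwind the definitions in the fixed coordinate system and perform a direct Cartesian calculation. Recall $c=(1,0,0)$, $S=\{x>0\}\cap\Sph^2$, and $L=\bar L\cap S$ where $\bar L=\{z=0\}\cap\Sph^2$. For any $p=(x,y,z)\in\Sph^2$, the orthogonal projection of $p$ onto the plane of $\bar L$ is $(x,y,0)$, whose normalization yields the nearest point of $\bar L$ to $p$, and the spherical distance of $p$ from $\bar L$ is $\arcsin |z|$. Consequently
\[
C_\theta=\{(x,y,z)\in\Sph^2:z=\sin\theta,\ x>0\},
\]
with the sign of $z$ matching the sign of $\theta$ (positive $\theta$ corresponds to the side of $L$ containing the north pole).

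Next, since $c=(1,0,0)$, the tangent plane $T_c\Sph^2$ is the plane $\{x=1\}$, and the central projection from $o$ sends a point $p=(x,y,z)\in S$ to
\[
\pi_c(p)=\left(1,\frac{y}{x},\frac{z}{x}\right).
\]
I would introduce the coordinates $(Y,Z)=(y/x,z/x)$ on $\{x=1\}$ and substitute the parametrization of $C_\theta$. The case $\theta=0$ is immediate: $z=0$ forces $Z=0$, so $\pi_c(L)$ is the line $z=0$.

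For $\theta\neq 0$, I would use the two constraints $z=\sin\theta$ and $x^2+y^2+z^2=1$ to derive $x^2=\cos^2\theta-y^2$, with $x>0$. Writing $Z=\sin\theta/x$ gives $x=\sin\theta/Z$ (the sign of $Z$ then equals that of $\theta$), and $Y=y/x$ gives $y=Yx$. Substituting into $x^2+y^2=\cos^2\theta$ yields
\[
\frac{\sin^2\theta}{Z^2}(1+Y^2)=\cos^2\theta,
\]
which rearranges to
\[
\frac{Z^2}{\tan^2\theta}-Y^2=1,
\]
exactly the stated hyperbola. The branch selection follows from the sign of $Z$: for $\theta>0$ we have $Z>0$, and for $\theta<0$ we have $Z<0$, matching the claim.

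The calculation is elementary and there is no real obstacle; the only delicate point is verifying that both orientations ($x>0$ inside $S$, and the sign of $\sin\theta$) conspire correctly so that $\pi_c$ traces out a full branch of the hyperbola (rather than half of it) as $y$ ranges over $(-\cos\theta,\cos\theta)$. This is checked by noting that $Y=y/\sqrt{\cos^2\theta-y^2}$ is a bijection from $(-\cos\theta,\cos\theta)$ onto $\mathbb{R}$, so the image covers the entire selected branch.
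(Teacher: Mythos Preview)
Your proof is correct and follows essentially the same approach as the paper: a direct coordinate computation of $\pi_c(C_\theta)$ in the plane $\{x=1\}$, followed by elimination to obtain the hyperbola equation and a sign check for the branch. The only cosmetic difference is that the paper parametrizes $C_\theta$ by the spherical polar angle $\varphi$ (writing $q=(\cos\theta\cos\varphi,\cos\theta\sin\varphi,\sin\theta)$ and eliminating $\varphi$ via $1+\tan^2\varphi=1/\cos^2\varphi$), whereas you work directly in Cartesian coordinates using $x^2+y^2=\cos^2\theta$; your surjectivity argument via the bijection $y\mapsto y/\sqrt{\cos^2\theta-y^2}$ is slightly more explicit than the paper's appeal to connectedness.
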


\begin{proof}
Let $q$ be a point of $C_{\theta}$, where we assume that $\theta > 0$. Using spherical polar coordinates, we have $q= (\cos \theta \cos \varphi, \cos \theta \sin \varphi, \sin \theta)$ for some $- \frac{\pi}{2} < \varphi < \frac{\pi}{2}$. Since the equation of $T_c \Sph^2$ is $\{ x=1 \}$, the central projection of $q$ onto this plane is
\[
\pi_c(q) = \left( 1, \tan \varphi, \frac{\tan \theta}{ \cos \varphi} \right).
\]
Using the identity $1 + \tan^2 \alpha = \frac{1}{\cos^2 \alpha}$, we may eliminate $\varphi$ and obtain that the coordinates of $\pi_c (q)$ satisfy the equations $x=1$ and $1 = \frac{z^2}{\tan^2 \theta} - y^2$. Since $C_{\theta}$ is connected and $\pi_c$ is continuous, one can easily show that $\pi_c(C_{\theta})$ is one of the two branches of the above hyperbola. Now, since points with positive $\varphi$-coordinates on $S$ are projected onto the open half-plane $\{ x = 1 \}\cap\{ z > 0 \}$, the assertion follows for $\theta > 0$. The remaining two cases can be proved by a similar argument.
\end{proof}

\begin{lemma}\label{lem:proj}
Let $T \subset S$ be any triangle with $c$ as a vertex. Then $\proj_{L,H}(T)$ is convex.
\end{lemma}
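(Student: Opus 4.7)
The plan is to work in the central projection model of $\Sph^n$. Let $\pi_c\colon S\to T_c\Sph^n$ be the central projection from $o$, and identify $T_c\Sph^n\cong\R^n$ as $\R_y\oplus\R^{n-1}_z$, so that $\pi_c(L)$ is the $y$-axis and $\pi_c(H)$ is the hyperplane $\{y=0\}$. A short computation in the spherical polar coordinates $(\theta,\varphi,q_2)$ introduced in Subsection~\ref{subsec:prelim} (using that $p=(\cos\theta\cos\varphi,\cos\theta\sin\varphi,\sin\theta\,q_2)$ projects to $(\tan\varphi,(\tan\theta/\cos\varphi)\,q_2)$) shows that in this model $\proj_{L,H}$ corresponds to the map
\[
\Phi\colon\R^n\to\R^{n-1},\qquad \Phi(y,z)=\frac{z}{\sqrt{1+y^2}}.
\]
Since $\pi_c$ converts spherical convexity into Euclidean convexity on $S$ and likewise on the subhemisphere $H$, it suffices to show that $\Phi(T^*)$ is Euclidean convex in $\R^{n-1}$, where $T^*:=\pi_c(T)$ is the Euclidean triangle with vertices $0$, $a^*=(y_a,z_a)$, and $b^*=(y_b,z_b)$.

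Next I reduce the problem to dimension two. Since $T^*\subseteq\mathrm{span}(a^*,b^*)$ and $\Phi$ sends that plane into $P_z:=\mathrm{span}(z_a,z_b)\subseteq\R^{n-1}$, the image $\Phi(T^*)$ lies in a subspace of dimension at most two. In the generic case in which $z_a,z_b$ are linearly independent, I use $(\alpha,\beta)\in\R^2$ as coordinates on $\mathrm{span}(a^*,b^*)$ through $\{a^*,b^*\}$ and on $P_z$ through $\{z_a,z_b\}$. In these coordinates $T^*$ becomes the standard simplex $\Delta=\{(\alpha,\beta):\alpha,\beta\geq 0,\ \alpha+\beta\leq 1\}$ and $\Phi$ becomes
\[
f(\alpha,\beta)=\frac{(\alpha,\beta)}{\sqrt{1+(\alpha y_a+\beta y_b)^2}}.
\]

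The key step is to compute $f(\Delta)$ explicitly. Writing $(x,y)=f(\alpha,\beta)$ and $m:=y_ax+y_by$, and setting $\ell:=\alpha y_a+\beta y_b$, the definition of $f$ forces the self-referential relation $\ell=m\sqrt{1+\ell^2}$, which after squaring gives $\sqrt{1+\ell^2}=1/\sqrt{1-m^2}$ and therefore $(\alpha,\beta)=(x,y)/\sqrt{1-m^2}$. Re-expressing the condition $(\alpha,\beta)\in\Delta$ in terms of $(x,y)$ yields
\[
f(\Delta)=\bigl\{(x,y)\in\R^2:x,y\geq 0\ \text{and}\ (x+y)^2+(y_ax+y_by)^2\leq 1\bigr\},
\]
which is the intersection of the closed first quadrant with the sublevel set at $1$ of the positive semidefinite quadratic form $(x,y)\mapsto(x+y)^2+(y_ax+y_by)^2$. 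Both pieces are Euclidean convex, so $f(\Delta)$ is convex. The degenerate case in which $z_a,z_b$ are linearly dependent follows at once because $\Phi(T^*)$ then lies in a line and is the continuous image of the connected set $T^*$, hence an interval. The main obstacle will be the inversion step: I have to confirm that $|m|<1$ throughout $f(\Delta)$ and track the sign of $\sqrt{1+\ell^2}$ carefully so that the squared relation is consistent with the original equation $\ell=m\sqrt{1+\ell^2}$.
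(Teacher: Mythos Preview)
Your argument is correct and complete. The worry you flag about the inversion is easily discharged: from $\ell=m\sqrt{1+\ell^2}$ one sees that $\ell$ and $m$ always have the same sign and $|m|=|\ell|/\sqrt{1+\ell^2}<1$; conversely, if $(x,y)$ satisfies $(x+y)^2+m^2\le 1$ with $x,y\ge 0$, then $m^2=1$ would force $x=y=0$ and hence $m=0$, so in fact $|m|<1$ and one may set $(\alpha,\beta)=(x,y)/\sqrt{1-m^2}$ and check directly that $f(\alpha,\beta)=(x,y)$. So the closed-form description $f(\Delta)=\{x,y\ge 0\}\cap\{(x+y)^2+(y_ax+y_by)^2\le 1\}$ is exact.

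Your approach is genuinely different from the paper's. The paper first reduces to $n=3$ (since $T\cup L$ spans at most a $3$-sphere), then uses rotation about $H_0$ to place the edge $[q_1,q_2]_s$ symmetrically with respect to $H$, and finally verifies convexity by a direct coordinate computation: it shows that the midpoint $m^*$ of the projected edge lies no closer to $L$ than the image $\tilde m$ of the midpoint of the projected segment in $H$, via an explicit comparison of radii on the model hyperbolas. By contrast, you write $\proj_{L,H}$ once and for all as $\Phi(y,z)=z/\sqrt{1+y^2}$ in the central projection, and then compute the image of the triangle exactly as the intersection of a quadrant with a filled ellipse. Your route gives a cleaner closed-form description of the image and avoids the symmetrization step; the paper's route stays closer to the geometric picture and checks only what is strictly needed (a single midpoint inequality). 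Both work; yours arguably generalizes more readily.
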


\begin{proof}
Note that $T \cup L$ is contained in a $3$-dimensional great sphere. Since this great sphere contains every distance curve with axis $L$ through a point of $T$, we may assume that it coincides with $\Sp$, i.e., that $n=3$. For any point $q \in S$, we say that the \emph{angular distance} of $q$ from $H$ is the angle of the rotation around $H_0$ that moves $q$ to $H$.

Furthermore, observe that if $T = \conv_s \{ c, q_1,q_2 \}$ and $T'= \conv_s \{ c,q_1',q_2'\}$, where $[q_1',q_2']_s$ is a rotated copy of $[q_1,q_2]_s$ around $H_0$, then $\proj_{L,H}(T) = \proj_{L,H}(T')$. Thus, without loss of generality, we may assume that $H$ intersects $[q_1,q_2]_s$, and that $q_1$ and $q_2$ have the same angular distance from $H$. For $i=1,2$, let $C_i$ denote the distance curve with axis $L$ through $q_i$, and let $\tilde{q}_i=\pi_c(\proj_{L,H}(q_i))$.

In the remaining part of the proof, we investigate the central projections of $T$ and $\proj_{L,H}(T)$ onto $T_c \Sph^3$. We identify this tangent space with $\R^3$, and, without loss of generality, assume that $\pi_c(L)$ is the $z$-axis and $\pi_c(H)$ is the $(x,y)$-plane. Then, using a suitable coordinate system, we have that $\tilde{q}_1 = (r_1 \cos \varphi, r_1 \sin \varphi, 0)$ and $\tilde{q}_2 = (r_2 \cos \varphi, -r_2 \sin \varphi, 0)$ for some $r_1,r_2 > 0$ and $0 < \varphi < \frac{\pi}{2}$. 

From Lemma~\ref{lem:model}, it follows that the points of the central projection $C_1^*$ of the distance curve $C_1$ are of the form $(r_1(z) \cos \varphi, r_1(z) \sin \varphi, z)$, where $1= \frac{r_1(z)^2}{\tan^2 \theta_1} - z^2$ and $\theta_1$ is the spherical distance of $C_1$ from $L$. From this and the expressions for the coordinates of $\tilde{q}_1$, we obtain that $r_1 = \tan \theta_1$ and $r_1(z) = \sqrt{1+z^2} r_1$, implying that $q_1^* = (r_1 \sqrt{1+\bar{z}^2} \cos \varphi, r_1 \sqrt{1+\bar{z}^2} \sin \varphi, \bar{z})$ for some $\bar{z} \in \R$. Since the central projections of the points of $S$ having a given angular distance from $H$ lie on two planes in $\R^3$ parallel and symmetric to the $(x,y)$-plane, a similar computation shows that then $q_2^* = (r_2 \sqrt{1+\bar{z}^2} \cos \varphi, -r_2 \sqrt{1+\bar{z}^2} \sin \varphi, -\bar{z})$. Without loss of generality, we may assume that $\bar{z} \geq 0$.

Let $m^*$ (resp. $\tilde{m}$) be the point where $[q_1^*,q_2^*]$ (resp. $[\tilde{q}_1,\tilde{q}_2]$) intersects the $(x,z)$-plane.
Since $\proj_{L,H}$ is a continuous function, it suffices to prove that the spherical distance from $L$ of the distance curve through $\pi_c^{-1}(m^*)$ is not smaller than that of the curve through $\pi_c^{-1}(\tilde{m})$.

We compute $\tilde{m}$. Clearly, $\tilde{m} = t \tilde{q}_1+(1-t) \tilde{q}_2$ for some $t \in (0,1)$, and the $y$-coordinate of $\tilde{m}$ is zero. From this we obtain that $t = \frac{r_2}{r_1+r_2}$, implying that $\tilde{m} = \left( \frac{2r_1r_2}{r_1+r_2} \cos \varphi, 0,0 \right)$. Similarly, $m^* = \left( \frac{2r_1r_2 }{r_1+r_2} \sqrt{1+\bar{z}^2} \cos \varphi, 0, \frac{\bar{z} (r_2-r_1)}{r_1+r_2} \right)$.
Now, if $C_{\tilde{m}}$ denotes the central projection of the distance curve with axis $L$ through $\pi_c^{-1}(\tilde{m})$, then the point $m'$ of $C_{\tilde{m}}$ whose $z$-coordinate is equal to that of $m^*$ is $m' = \left( \frac{2r_1r_2}{r_1+r_2} \sqrt{1 + \bar{z}^2 \left( \frac{r_2-r_1}{r_1+r_2} \right)^2} \cos \varphi, 0, \frac{\bar{z} (r_2-r_1)}{r_1+r_2} \right)$.
Thus, it suffices to show that
\[
0 \leq \frac{2r_1r_2 }{r_1+r_2} \sqrt{1+\bar{z}^2} \cos \varphi - \frac{2r_1r_2}{r_1+r_2} \sqrt{1 + \bar{z}^2 \left( \frac{r_2-r_1}{r_1+r_2} \right)^2} \cos \varphi.
\]
But this inequality readily follows from the inequality $\left| \frac{r_2-r_1}{r_1+r_2} \right| < 1$ for any $r_1, r_2 > 0$.
\end{proof}

\begin{remark}\label{rem:symmetrization}
Let $\Gamma \subset S \subset \Sph^3$ be an arc of a distance curve with axis $L$ and distance $\theta$. Let the signed angular distances of the endpoints of $\Gamma$ from $H$ be $\varphi_1, \varphi_2$; here the angular distance of a point $q$ from $H$ is said to be positive if $q$ belongs to a fixed connected component of $S \setminus H$, and it is said to be negative if it is contained in the opposite component. We choose our notation in such a way that $\varphi_1 < \varphi_2$. Let $\Gamma'=\sigma_{L,H}(\Gamma)$, and note that the signed angular distances of $\Gamma'$ from $H$ are $\pm \frac{\varphi_2-\varphi_1}{2}$.

Let $\tilde{\Gamma}$ and $\tilde{\Gamma}'$ be the central projections of $\Gamma$ and $\Gamma'$ onto $\R^3$, respectively, where we choose the coordinate system of $\R^3$ such that the projection of $H$ is the $(x,y)$-plane, the projection of $L$ is the $z$-axis, and the projections of points with positive angular distance from $H$ have positive $z$-coordinate. By Lemma~\ref{lem:model}, if $z_1 < z_2$ denote the $z$-coordinates of the endpoints of $\tilde{\Gamma}$, we have $\varphi_i = \arctan z_i$ for $i=1,2$. Thus, the $z$-coordinates of the endpoints of $\tilde{\Gamma}'$ are
\[
\pm \tan \frac{\arctan z_2-\arctan z_1}{2} = \pm \frac{z_2-z_1}{\sqrt{1+z_2^2}\sqrt{1+z_1^2}+1+z_1 z_2}.
\]
\end{remark}

\begin{theorem}\label{thm:projconvex}
For any convex body $K \in \K_S$ with $c \in \proj_{L,H}(K)$, $\proj_{L,H}(K)$ is convex.
\end{theorem}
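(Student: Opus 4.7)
The plan is to work in the central projection $\pi_c : S \to T_c\Sp$ and to show that $\pi_c(\proj_{L,H}(K))$ is convex in $\pi_c(H)$, which is equivalent to the spherical convexity of $\proj_{L,H}(K)$. Extending Lemma~\ref{lem:model} to higher dimensions, one checks that in coordinates on $T_c\Sp \cong \R^n$ for which $\pi_c(L)$ is the $y$-axis and $\pi_c(H)$ is the hyperplane $\{y=0\}$, the map $\proj_{L,H}$ transports to the explicit formula $\Pi(y,\mathbf{z}) = \mathbf{z}/\sqrt{1+y^2}$. Hence the goal reduces to showing that if $K^* = \pi_c(K) \subset \R^n$ is convex and meets the $y$-axis, then $\Pi(K^*)$ is convex.

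Take $\mathbf{u}_1,\mathbf{u}_2 \in \Pi(K^*)$ and $t \in [0,1]$. I will lift these to points $(y_i,\mathbf{z}_i) \in K^*$ with $\mathbf{z}_i = \mathbf{u}_i \sqrt{1+y_i^2}$; since the hypothesis $c \in \proj_{L,H}(K)$ is equivalent to $K \cap L \neq \emptyset$ (the distance curve through $c$ being $L$ itself), there is a further point $(y_0,0) \in K^*$. Writing $y_i = \tan\theta_i$ with $\theta_i \in (-\pi/2,\pi/2)$ for $i = 0,1,2$, the key move is to introduce an auxiliary angle $\theta = \theta_0 + \psi$, where $\psi \in (-\pi/2,\pi/2)$ is defined by
\[
\sin\psi = (1-t)\sin(\theta_1-\theta_0) + t\sin(\theta_2-\theta_0),
\]
and then to form the convex combination
\[
Q^* = \alpha_0 (y_0,0) + \alpha_1 (y_1,\mathbf{z}_1) + \alpha_2 (y_2,\mathbf{z}_2), \quad \alpha_1 = \frac{(1-t)\cos\theta_1}{\cos\theta},\ \alpha_2 = \frac{t\cos\theta_2}{\cos\theta},\ \alpha_0 = 1 - \alpha_1 - \alpha_2,
\]
which lies in $K^*$ by convexity, provided all the $\alpha_i$ are nonnegative.

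The nonnegativity of $\alpha_0$ reduces to the inequality $(1-t)\cos\theta_1 + t\cos\theta_2 \leq \cos(\theta_0+\psi)$; after expanding the right side by the cosine addition formula, this boils down to $(1-t)\cos(\theta_1-\theta_0) + t\cos(\theta_2-\theta_0) \leq \cos\psi$, which just expresses that the convex combination of the two unit vectors $\bigl(\cos(\theta_i-\theta_0), \sin(\theta_i-\theta_0)\bigr)$, $i=1,2$, lies inside the closed unit disk. A short trigonometric computation then shows that with these weights the $y$-coordinate of $Q^*$ is precisely $\tan\theta$ and its $\mathbf{z}$-coordinate equals $[(1-t)\mathbf{u}_1 + t\mathbf{u}_2]\sec\theta$, hence $\Pi(Q^*) = (1-t)\mathbf{u}_1 + t\mathbf{u}_2$, as required.

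The hard part is to guess the substitution $y = \tan\theta$ and the precise form of the weights $\alpha_i$; once these are in place, the verification is a short but rigid trigonometric identity. Each of the three lifted points — in particular the one $(y_0,0)$ supplied by the hypothesis $c \in \proj_{L,H}(K)$ — genuinely contributes to $Q^*$, so the hypothesis cannot be dropped; this is consistent with Lemma~\ref{lem:proj}, which already needed a vertex at $c$ to make the analogous analysis go through for triangles.
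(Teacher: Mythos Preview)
Your proof is correct, but it takes a genuinely different route from the paper's. The paper's argument is essentially three lines: after replacing $K$ by $\conv_s(K\cup\{c\})$ (which has the same projection), one lifts $q_1,q_2\in\proj_{L,H}(K)$ to $x_1,x_2\in K$, forms the triangle $T=\conv_s\{c,x_1,x_2\}\subset K$, and quotes Lemma~\ref{lem:proj} to conclude that $[q_1,q_2]_s\subset\proj_{L,H}(T)\subset\proj_{L,H}(K)$. Thus all the work is delegated to Lemma~\ref{lem:proj}, whose own proof is a somewhat involved hyperbola computation in the model. You instead bypass Lemma~\ref{lem:proj} entirely: you derive the clean closed form $\Pi(y,\mathbf{z})=\mathbf{z}/\sqrt{1+y^2}$ for the projection in the gnomonic model, and then produce by hand a convex combination $Q^*$ of the two lifted points together with an arbitrary point of $K^*$ on the $y$-axis whose image under $\Pi$ is exactly $(1-t)\mathbf{u}_1+t\mathbf{u}_2$. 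The trigonometric substitution $y_i=\tan\theta_i$ and the choice of weights $\alpha_i$ make the verification a rigid identity, and the crucial inequality $\alpha_0\ge 0$ follows, as you note, from the fact that a chord of the unit circle lies in the unit disk (together with $\cos\theta_0>0$, which turns the reduced inequality $(1-t)\cos(\theta_1-\theta_0)+t\cos(\theta_2-\theta_0)\le\cos\psi$ into the one you actually need). What you gain is a self-contained argument that does not rely on Lemma~\ref{lem:proj}; indeed, specializing your construction to $K=\conv_s\{c,x_1,x_2\}$ and $(y_0,0)=\pi_c(c)=o$ yields an alternative, and arguably more transparent, proof of that lemma. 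What the paper's approach gains is modularity and brevity once Lemma~\ref{lem:proj} is in hand.
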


\begin{proof}
Note that if $c \in \proj_{L,H}(K)$, then $\proj_{L,H}(K) = \proj_{L,H}(\conv_s(K \cup \{c\}))$, and hence we can assume that $c \in K$. Consider any points $q_1, q_2 \in \proj_{L,H}(K)$, and let $q \in [q_1,q_2]_s$. For $i=1,2$, choose some point $x_i \in K$ which belongs to $\proj_{L,H}^{-1}(q_i)$. Applying Lemma~\ref{lem:proj} for the triangle $T=\conv_s \{ c,x_1,x_2 \}$, we have $q \in \proj_{L,H}(T) \subseteq \proj_{L,H}(K)$.
\end{proof}

We show that Theorem~\ref{thm:planarconvexity} does not hold in $\Sp$ for any $n > 2$.

\begin{theorem}\label{thm:highdimconvexity}
For any $n > 2$, there is a $c$-symmetric convex body $K \in \K_S$ for which $\sigma_{L,H}(K)$ is not convex.
\end{theorem}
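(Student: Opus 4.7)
The plan is to exhibit an explicit $c$-symmetric convex body $K \in \K_S$ in $\Sph^3$ whose Steiner symmetral fails to be convex; for $n > 3$, a corresponding $c$-symmetric convex body in $\Sph^n$ is obtained by embedding $\Sph^3$ as a great $3$-sphere of $\Sph^n$ containing $L$ and a great $2$-sphere of $H$, and thickening $K$ slightly and $c$-symmetrically in the $n-3$ orthogonal directions.

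The key geometric feature to exploit is that for $n \geq 3$ the direction space $\Sph^{n-2}$ of distance curves at a fixed distance $\theta > 0$ is positive-dimensional, rather than consisting of two isolated points as in the planar case. In particular, extreme points of $K$ at a common distance from $L$ can sit on different distance curves at unequal angular positions, and Steiner symmetrization moves their neighborhoods independently toward $\phi = 0$ according to the nonlinear rule recorded in Remark~\ref{rem:symmetrization}. A natural candidate in $\Sph^3$ is the spherical tetrahedron
\[
K = \conv_s\{A_1^+,\, A_1^-,\, A_2^+,\, A_2^-\},
\]
where $A_1^\pm = (\cos\theta_0\cos\phi_1,\, \pm\cos\theta_0\sin\phi_1,\, \pm\sin\theta_0,\, 0)$ and $A_2^\pm = (\cos\theta_0\cos\phi_2,\, \pm\cos\theta_0\sin\phi_2,\, 0,\, \pm\sin\theta_0)$ with $\phi_1 \neq \phi_2$ in $(0,\pi/2)$. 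This $K$ is $c$-symmetric with nonempty interior, contains $c$, and its four distinguished vertices lie at distance $\theta_0$ from $L$ on four different distance curves at angular positions $\pm\phi_1$ and $\pm\phi_2$.

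To show that $\sigma_{L,H}(K)$ is not convex, I would analyze its boundary at a distance $\theta'$ slightly less than $\theta_0$. For such $\theta'$, the angular length $\mu(\theta', u)$ of $K \cap C_{\theta', u}$ is a continuous positive function of $u$ on a subset of $\Sph^1$, peaking near the directions $u_A = (1,0)$ and $u_B = (0,1)$; after symmetrization, the corresponding arc on $C_{\theta', u}$ is centered at $C_{\theta', u} \cap H$. The positive-$\phi$ portion of the boundary of $\sigma_{L,H}(K)$ at distance $\theta'$ is thus a curve parameterized by $u \mapsto$ the endpoint at angular position $+\mu(\theta', u)/2$ on $C_{\theta', u}$, and because $\mu(\theta', \cdot)$ dips between the two peaks while the symmetrization re-centers every arc at $\phi = 0$, this curve should develop a concavity between $u_A$ and $u_B$. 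Exhibiting two points $q_1, q_2$ on this boundary curve whose spherical midpoint lies strictly outside $\sigma_{L,H}(K)$ then yields the desired failure of convexity.

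The main obstacle is converting this geometric picture into a rigorous quantitative inequality. A feasible route is to specialize to $\theta_0$ close to $\pi/2$ and $\phi_1, \phi_2$ with $|\phi_1 - \phi_2|$ large, where the nonlinearity in Remark~\ref{rem:symmetrization} is most pronounced, and verify the non-convexity by direct computation with the tetrahedron above. An alternative is to pass to a degenerate limit in which $\sigma_{L,H}(K)$ manifestly has a reentrant corner --- for instance, by letting $K$ degenerate to the union of two arcs on different distance curves --- and then perturb back to a genuine convex body with nonempty interior. The dimensional hypothesis $n > 2$ is essential here: the planar proof of Theorem~\ref{thm:planarconvexity} exploits that there is only one distance curve on each side of $L$ at each level, whereas for $n > 2$ the continuum of distance curves at each level gives Steiner symmetrization enough additional degrees of freedom to violate convexity.
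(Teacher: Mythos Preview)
Your proposal is a plan rather than a proof, and you say so yourself: ``The main obstacle is converting this geometric picture into a rigorous quantitative inequality.'' That obstacle is the entire content of the theorem. More seriously, the heuristic you offer for why the tetrahedron's symmetral should fail to be convex is not convincing. You argue that the half-length function $u \mapsto \mu(\theta',u)/2$ dips between the directions $u_A$ and $u_B$, and that this dip should produce a concavity in the boundary of $\sigma_{L,H}(K)$. But in Euclidean Steiner symmetrization the section lengths also dip as one moves across a convex body, yet the symmetral is always convex---the length function is concave in the right sense, and that suffices. So a dip by itself cannot be the mechanism; the failure on the sphere must come from the interaction between the curvature of the distance curves and the geometry of $H$, and your argument does not isolate that. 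Without a quantitative replacement for this heuristic you have no proof, and neither of your suggested fixes (pushing $\theta_0 \to \pi/2$, or taking a degenerate limit) is worked out.

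The paper takes a different and fully rigorous route. Working in the central-projection model of $S \subset \Sph^3$ on $\R^3$, it bounds $K$ locally by two generic great $2$-spheres (planes $H_+,H_-$ in the model) through a point $p_0 \in H$. Using Lemma~\ref{lem:model} and Remark~\ref{rem:symmetrization} it writes the symmetrized boundary near $p_0$ as an explicit parametrized surface $r(u,v)$, and then computes the sign of the Gaussian curvature at $p_0$ via $\langle r_u\times r_v, r_{uu}\rangle\langle r_u\times r_v, r_{vv}\rangle - \langle r_u\times r_v, r_{uv}\rangle^2$. A symbolic computation yields an explicit polynomial in the slope parameters of $H_\pm$, and a specific choice ($A_-=3$, $A_+=2$, $B_-=1$, $B_+=3$) makes it negative, so $p_0$ is a hyperbolic point and the symmetral is not convex. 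The body is then rotated and intersected with its reflection about $c$ to make it $c$-symmetric. The computation is delicate---the curvature expression does not have an obvious sign---which confirms that a soft argument of the kind you sketch is unlikely to succeed without a comparable calculation.
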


\begin{proof}
First, we construct such a body in $\Sph^3$. Similarly to the proof of Lemma~\ref{lem:proj}, we construct a set $K^*$ in $\R^3$ such that its image $K=\pi_c^{-1}(K^*) \subset S$ satisfies the required conditions. With a slight abuse of terminology, we identify any set $X \subseteq S$ with its projection onto $\R^3$, or in other words, we regard $\R^3$ as a model of the spherical space $S$. We use a coordinate system in which $L$ is the $z$-axis and $H$ is the $(x,y)$-plane.
Furthermore, for any point $q=(u,v,0)$ of $H$, we denote by $C_q$ or $C(u,v)$ the distance curve with axis $L$ passing through $q$. To work in this model, we note that spherical planes passing through $H_0$ are represented by planes parallel to the $(x,y)$-plane. We call these planes `horizontal'. Thus, a rotation about $H_0$ moves a horizontal plane into another horizontal plane, whereas it moves a nonhorizontal plane into another nonhorizontal plane, which might not be parallel to the original one.

Consider a point $p_0=(x_0,0,0) \in \R^3$ with $x_0 > 0$. Recall (see Lemma~\ref{lem:model} or the proof of Lemma~\ref{lem:proj}) that the distance curve through $p_0$ with axis $L$ is the set of points $(\sqrt{t^2+1}x_0, 0, t)$, where $t$ runs over $\R$. Note also that in our model, intersections of spherical planes with $S$ are represented by Euclidean planes. We choose two planes $H_+$ and $H_-$ through $p_0$, with equations $A_+(x-x_0)+B_+ y =z$ and $A_-(x-x_0)+B_- y =z$, respectively. We call $H_+$ and $H_-$ the \emph{upward} and \emph{downward} plane, respectively, and for $* \in \{ +,- \}$, label parameters related to $H_*$ with subscript $*$. We choose specific values for the parameters in the equations later; nevertheless, we intend to guarantee that  $L$ is not disjoint from the intersection of the open halfspaces $\{ (x,y,z) \in \R^3 : A_+(x-x_0)+B_+ y > z \}$ and $\{ (x,y,z) \in \R^3 : A_-(x-x_0)+B_- y < z \}$. A simple calculation shows that this condition is equivalent to the inequality $A_+ < A_-$.

Let $V$ be a sufficiently small neighborhood of $p_0$ in $H$, and for any distance curve $C(u,v)$ with $(u,v,0) \in V$ and for any $* \in \{ +,- \}$, we denote the intersection point of $C(u,v)$ with $H_*$ by $q_*(u,v)$, and denote the arc of $C(u,v)$ between $q_-(u,v)$ and $q_+(u,v)$ by $\Gamma(u,v)$. Applying the computations in the proof of Lemma~\ref{lem:proj}, we have that
\[
q_*(u,v) = \left(\sqrt{(z_*(u,v))^2+1} u, \sqrt{(z_*(u,v))^2+1} v, z_*(u,v) \right)
\]
for some $z_*(u,v) \in \R$, where the value of this expression is a continuous function of $(u,v)$.
In the following, we compute $z_*(u,v)$. Substituting the coordinates of $q_*(u,v)$ into the equation of $H_*$, we obtain the equation
\[
A_*(\sqrt{(z_*(u,v))^2+1} u-x_0) + B_* \sqrt{(z_*(u,v))^2+1} v = z_*(u,v)
\]
for $z_*(u,v)$.
After rearranging and squaring both sides, this equation becomes a quadratic equation for $z_*(u,v)$, with solutions
\[
\frac{A_*x_0 \pm \sqrt{(A_*^2 x_0^2+1)(A_*u+B_*v)^2-(A_* u+B_* v)^4}}{(A_*u+B_*v)^2-1}
\]
in $V$, if $|A_* u + B_* v| \neq 1$.
The solutions are continuous functions of $(u,v)$; thus, the condition that $z_*(x_0,0)=0$ yields that for $* \in \{ +,- \}$,
\begin{equation}\label{eq:zstar}
z_*(u,v)= \frac{A_*x_0 - (A_*u+B_*v)\sqrt{(A_*^2 x_0^2+1)-(A_* u+B_* v)^2}}{(A_*u+B_*v)^2-1}.
\end{equation}

Let $\Gamma'(u,v) = \sigma_{L,H}(\Gamma(u,v))$. Then by Remark~\ref{rem:symmetrization}, the endpoints of $\Gamma'(u,v)$ are
\[
\left( \sqrt{z_s(u,v)^2+1} u, \sqrt{z_s(u,v)^2+1} v, \pm z_s(u,v) \right),
\]
where
\[
z_s(u,v)= \frac{z_+(u,v)-z_-(u,v)}{\sqrt{(z_+(u,v))^2+1} \sqrt{(z_-(u,v))^2+1}+1+z_+(u,v) z_-(u,v)}.
\]
We note that the value of $z_s(u,v)$ is positive if and only if $z_+(u,v) > z_-(u,v)$.

Consider the surface defined by 
\begin{equation}\label{eq:surface}
r(u,v)=\left( \sqrt{z_s(u,v)^2+1} u, \sqrt{z_s(u,v)^2+1} v, z_s(u,v) \right)
\end{equation}
and satisfying $r(x_0,0)=p_0$.
We note that if this surface is convex, then its Gaussian curvature at every point is nonnegative. Thus, to show that it is not convex, it is sufficient to find values of the parameters such that the point $p_0$ is a hyperbolic point of the surface, i.e., the Gaussian curvature at this point is negative (see \cite{DoCarmo}). 

Consider the quantities $e(u,v) = \langle N(u,v), r''_{uu}(u,v) \rangle$, $f(u,v) = \langle N(u,v), r''_{uv}(u,v) \rangle$ and  $g(u,v) = \langle N(u,v), r''_{vv}(u,v) \rangle$, where
$N(u,v) = \frac{r'_u(u,v) \times r'_v(u,v)}{| r'_u(u,v) \times r'_v(u,v)|}$.
The Gaussian curvature $K(u,v)$ of the surface at $(u,v)$ is equal to the value of $\frac{eg-f^2}{|r_u \times r_v|^2}$ at $(u,v)$. Thus, to show that $K(x_0,0)$ is negative, it suffices to show that the quantity
\[
F(u,v)= \langle r_u \times r_v, r_{uu} \rangle \cdot \langle r_u \times r_v, r_{vv} \rangle - \left( \langle r_u \times r_v, r_{uv} \rangle \right)^2
\]
is negative at $(x_0,0)$ for certain values of $A_*, B_*$ with  $* \in \{ +,- \}$.

A computation using the Maple 18.00 software yields that if $0 \leq A_+ \leq A_-$, then
\begin{multline}
F(x_0,0)=\frac{x_0^2}{64}(B_+ + B_-)(3 A_- B_- - A_- B_+ + A_+ B_- - 3 A_+ B_+)\\
(3 A_-^3 + 3 A_-^2 A_+-3 A_- A_+^2- 3 A_- B_-^2-2 A_- B_- B_+ + A_- B_+^2 - 3 A_+^3-A_+ B_-^2 + 2 A_+ B_- B_++3 A_+ B_+^2).
\end{multline}
Thus, choosing the values $A_- = 3$, $A_+=2$, $B_-=1$, and $B_+=3$, we get $F(x_0,0)=-139x_0^2$.

Now we construct $K$. Fix some sufficiently small angular distance $\varphi > 0$, and rotate the closed halfspaces with equations $ 3(x-x_0)+y  \leq z$ and 
$2(x-x_0)+3y  \geq z$ about $H_0$ by $\varphi > 0$, respectively, where we choose the directions of the rotations differently for the two halfspaces such that $p_0$ is contained in the interiors of both rotated copies. We denote the intersection of the two rotated closed halfspaces by $D$. Note that by our construction, $D$ is spherically convex, but $\sigma_{L,H}(D)$ is not. Next, we rotate $D$ about $H_0$ such that the $z$-axis intersects the rotated copy of $D$ in a closed segment symmetric to $o$, and denote the rotated copy by $D'$. Finally, we define $K$ as the intersection of $D'$ and its reflected copy about $o$. Then $K$ is an $o$-symmetric spherically convex body with the property that $\sigma_{L,H}(K)$ is not spherically convex. 

To construct a spherically convex body in dimension $n > 3$, we can extend the two lunes defining $K$ to any higher-dimensional spherical space in a natural way.
\end{proof}

\subsection{Monotonicity of the perimeter under Steiner symmetrization}\label{subsec:perimeter}

Here again we use the coordinate system introduced in the last paragraph of Subsection~\ref{subsec:prelim}. Also, for a spherically convex set $K\subset S$, let $\perim(K)$ denote the perimeter of $K$. Our main result in this subsection is the following.

\begin{theorem}\label{thm:perimeter}
Let $K$ be a spherically convex set contained in $S$ satisfying the angular monotonicity property. Then
\[
\perim(\sigma_{L,H}(K)) \leq \perim(K).
\]
\end{theorem}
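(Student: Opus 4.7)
The plan is to express both perimeters as integrals in the normal polar coordinates $(\theta,\varphi)$ on $S$ introduced at the end of Subsection~\ref{subsec:prelim}, and then compare the integrands pointwise by a one-dimensional Minkowski-type inequality.

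After a standard approximation in the Hausdorff metric (which preserves spherical convexity and angular monotonicity, and by Theorem~\ref{thm:planarconvexity} preserves convexity of the symmetral), I would assume that $\bd(K)$ is piecewise $C^1$. Since $K$ is spherically convex, for each $\theta$ in the range $[\theta_{\min},\theta_{\max}]$ of $\theta$-values attained by $K$, the set $K\cap C_\theta$ is a subarc of $C_\theta$; denote the $\varphi$-coordinates of its endpoints by $\varphi^-(\theta)\leq\varphi^+(\theta)$, and set $w(\theta)=\varphi^+(\theta)-\varphi^-(\theta)$. In these coordinates the induced Riemannian metric of $S$ is $ds^2=d\theta^2+\cos^2\theta\,d\varphi^2$, so
\begin{equation*}
\perim(K) = \int_{\theta_{\min}}^{\theta_{\max}}\!\left(\sqrt{1+\cos^2\theta\,((\varphi^+)'(\theta))^2}+\sqrt{1+\cos^2\theta\,((\varphi^-)'(\theta))^2}\right)d\theta.
\end{equation*}
By Definition~\ref{defn:Steiner}, $\sigma_{L,H}(K)\cap C_\theta$ is the arc of angular length $w(\theta)$ centered at $C_\theta\cap H$, so $\bd(\sigma_{L,H}(K))$ is parametrized by $\varphi=\pm w(\theta)/2$, which yields
\begin{equation*}
\perim(\sigma_{L,H}(K)) = 2\int_{\theta_{\min}}^{\theta_{\max}}\sqrt{1+\cos^2\theta\,(w'(\theta)/2)^2}\,d\theta.
\end{equation*}

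The key inequality is
\[
\sqrt{1+a^2}+\sqrt{1+b^2}\;\geq\;2\sqrt{1+\left(\tfrac{a-b}{2}\right)^{\!2}}\qquad(a,b\in\R),
\]
which is just the triangle inequality $|u|+|v|\geq|u+v|$ in $\R^2$ applied to $u=(1,a)$ and $v=(1,-b)$. Taking $a=\cos\theta\,(\varphi^+)'(\theta)$ and $b=\cos\theta\,(\varphi^-)'(\theta)$, so that $a-b=\cos\theta\,w'(\theta)$, and integrating over $[\theta_{\min},\theta_{\max}]$ gives $\perim(K)\geq\perim(\sigma_{L,H}(K))$.

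The main technical nuisance will be that $\varphi^{\pm}$ can have infinite one-sided slopes at $\theta=\theta_{\min}$ and $\theta=\theta_{\max}$, where the two boundary arcs meet. I would handle this either by truncating the integrals to $[\theta_{\min}+\epsilon,\theta_{\max}-\epsilon]$ and letting $\epsilon\downarrow 0$ using the monotone convergence theorem, or, more cleanly, by parametrizing each of the two boundary arcs of $K$ by arclength (so each length is a priori finite) and applying the same pointwise inequality after the corresponding change of variables. The angular monotonicity hypothesis does not enter the comparison inequality itself; it is used only to ensure, via Theorem~\ref{thm:planarconvexity}, that $\sigma_{L,H}(K)$ is convex, so that the displayed formula really is its perimeter.
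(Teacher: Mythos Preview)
Your proof is correct and shares the paper's overall shape: both parametrize $\bd(K)$ and $\bd(\sigma_{L,H}(K))$ by the $\theta$-coordinate and establish a pointwise inequality between the integrands. The difference lies in how that pointwise inequality is obtained. The paper proves the discrete analogue
\[
d_s(\Gamma_1(\theta'),\Gamma_1(\theta))+d_s(\Gamma_2(\theta'),\Gamma_2(\theta))\geq d_s(\Delta_1(\theta'),\Delta_1(\theta))+d_s(\Delta_2(\theta'),\Delta_2(\theta))
\]
for nearby $\theta<\theta'$ by first invoking the angular monotonicity property to reduce (as in the proof of Theorem~\ref{thm:planarconvexity}) to the degenerate case $p_1'=p_2'$, and then appealing to a separate lemma (Lemma~\ref{lem:ellipse}) on the convexity of spherical ellipses with parameter $D<\pi$. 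You instead read off the metric $ds^2=d\theta^2+\cos^2\theta\,d\varphi^2$ and apply the elementary inequality $\sqrt{1+a^2}+\sqrt{1+b^2}\geq 2\sqrt{1+((a-b)/2)^2}$, i.e., the triangle inequality for $(1,a),(1,-b)\in\R^2$. Your route is shorter and avoids Lemma~\ref{lem:ellipse} entirely; moreover, as you correctly note, your pointwise inequality holds for all $a,b$, so the angular monotonicity hypothesis is not used in the comparison at all---only the connectedness property is needed to write both perimeters in the displayed form. (In fact the displayed formula for $\perim(\sigma_{L,H}(K))$ is the boundary length of the symmetral regardless of convexity, so even your stated use of Theorem~\ref{thm:planarconvexity} is dispensable.) The paper's route, by contrast, uses angular monotonicity substantively in the reduction step. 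What the paper's approach buys is a purely synthetic argument that produces the ellipse lemma as a byproduct; what yours buys is brevity and a slightly stronger conclusion.
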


For the proof of Theorem~\ref{thm:perimeter}, we need Lemma~\ref{lem:ellipse}.

\begin{lemma}\label{lem:ellipse}
Let $p,q \in \Sph^2$ be distinct, nonantipodal points. Consider the ellipse
\[
E= \{ x \in \Sph^2 : d_s(p,x)+d_s(x,q) \leq D\}
\]
with foci $p,q$, for some $0 < D < 2\pi$.
Then $E$ is convex if and only if $D < \pi$. Furthermore, if $D= \pi$, then $E$ is the closed hemisphere with the midpoint $m$ of the segment $[p,q]_s$ as center.
\end{lemma}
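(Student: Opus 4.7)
The plan is to split the argument into the three cases $D<\pi$, $D=\pi$, and $D>\pi$, treating $D=\pi$ first since it underpins the other two. For $D=\pi$, I will set up coordinates with the midpoint $m$ of $[p,q]_s$ at $(1,0,0)$ and $p,q=(\cos a,\pm\sin a,0)$, where $a=d_s(p,m)=d_s(q,m)$. Writing $A=d_s(p,x)$ and $B=d_s(q,x)$ and applying the sum-to-product identity $\cos A+\cos B=2\cos\tfrac{A+B}{2}\cos\tfrac{A-B}{2}$, the left-hand side evaluates to $\langle p,x\rangle+\langle q,x\rangle=2\cos a\cdot x_1$, while the factor $\cos\tfrac{A-B}{2}$ is strictly positive by the triangle inequality $|A-B|\le d_s(p,q)=2a<\pi$. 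Hence $\cos\tfrac{A+B}{2}$ has the same sign as $x_1$, which (since $(A+B)/2\in(0,\pi)$) is equivalent to $A+B<\pi\iff d_s(m,x)<\pi/2$, and analogously for $=$ and $>$. Consequently $E_\pi$ is exactly the closed hemisphere centered at $m$; since it contains pairs of antipodal points on its bounding equator, it is not spherically convex.

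For $D>\pi$, monotonicity gives $E\supseteq E_\pi$, so $E$ again contains antipodes and is not convex. For $D<\pi$ the same characterization yields $E\subseteq\{A+B<\pi\}$, the open hemisphere centered at $m$, which I call $H_m$. To prove geodesic convexity of $E$ in this case, I would take $x_1,x_2\in E$, parametrize the segment $[x_1,x_2]_s$ by arclength as $\gamma\colon[0,L]\to\Sph^2$ with $L<\pi$, and set $F(t)=d_s(p,\gamma(t))+d_s(q,\gamma(t))$. A sinusoidal argument using $\langle m,\gamma(t)\rangle=A_0\cos t+B_0\sin t$, the positivity of its endpoint values, and $L<\pi$ shows that $\gamma\subset H_m$, hence $F<\pi$ along $\gamma$. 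To exclude an interior maximum $t^*$ of $F$ with $F(t^*)>D$, I would use the Hessian identity $\mathrm{Hess}(d_s(p,\cdot))(v,v)=\cot\rho\,\sin^2\theta$ for a unit tangent $v$ at angular offset $\theta$ from the outward radial direction at a point where $d_s(p,\cdot)=\rho$ (derivable by differentiating the spherical law of cosines twice). Stationarity at $t^*$ gives $\cos\alpha+\cos\beta=0$, hence $\sin\alpha=\sin\beta$, where $\alpha,\beta$ are the angles of $\dot\gamma(t^*)$ with the outward radial directions at $p,q$; a short rearrangement then yields
\[
F''(t^*)=\sin^2\alpha\cdot\frac{\sin F(t^*)}{\sin d_s(p,\gamma(t^*))\,\sin d_s(q,\gamma(t^*))},
\]
which is strictly positive because $F(t^*)\in(0,\pi)$, unless $\sin\alpha=0$. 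In the nondegenerate case this contradicts the second-order condition $F''(t^*)\le 0$.

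The degenerate case $\sin\alpha=0$ forces $\dot\gamma(t^*)$ to be parallel to both radial directions from $p$ and $q$, so $\gamma(t^*)$ lies on the great circle through $p$ and $q$ and $\gamma$ must coincide with that great circle. A direct piecewise description of $F$ along this circle (equal to $2a$ on $[p,q]_s$, equal to $2\pi-2a$ on the opposite antipodal arc, and strictly monotone in between) shows that $\{F\le D\}$ on the circle is a single arc of length $D<\pi$ containing both $x_1$ and $x_2$, and hence containing the shorter arc between them. I anticipate the main technical nuisance to be the Hessian computation together with the degenerate planar case; the rest amounts to bookkeeping.
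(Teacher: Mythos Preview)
Your argument is correct. The treatment of $D=\pi$ via the sum--to--product identity is clean and in fact sharper than the paper's version, and your handling of $D>\pi$ by the inclusion $E\supseteq E_\pi$ is immediate. For $D<\pi$, your second-variation argument goes through: the Hessian identity $\mathrm{Hess}\,d_s(p,\cdot)(v,v)=\cot\rho\,\sin^2\theta$ is the standard comparison formula on the unit sphere, the first-order condition forces $\sin\alpha=\sin\beta$, and then $F''(t^*)=\sin^2\alpha\cdot\dfrac{\sin F(t^*)}{\sin A\,\sin B}>0$ gives the contradiction; the collinear degenerate case is disposed of exactly as you describe. Two small points worth making explicit in a write-up: (i) $F$ is $C^2$ at $t^*$ because $F(t^*)>D\geq d_s(p,q)$ rules out $\gamma(t^*)\in\{p,q\}$, and $\gamma\subset H_m$ rules out $\{-p,-q\}$; (ii) the case $D<d_s(p,q)$ gives $E=\emptyset$, which is vacuously convex.

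The paper, by contrast, proves the $D<\pi$ case by an elementary synthetic argument modelled on the classical Euclidean proof: given $x,y\in E$ and $z\in[x,y]_s$, it reflects $q$ across the line through $x,y$ to $q'$ and chains triangle inequalities through an auxiliary point $z'$ on $[x,q']_s$ to obtain $d_s(p,z)+d_s(z,q)\leq d_s(p,x)+d_s(x,q)\leq D$; the cases $D\geq\pi$ are then read off from the complementarity $\{d_s(p,\cdot)+d_s(q,\cdot)<D\}=\Sph^2\setminus\{d_s(-p,\cdot)+d_s(-q,\cdot)\leq 2\pi-D\}$. The paper's route is shorter and uses no calculus, matching the ``elementary geometry'' spirit of the surrounding section, while your route is more analytic and would transfer verbatim to any simply connected space form of nonnegative curvature (since the analogue of $\cot\rho$ remains nonnegative there), at the cost of invoking the Hessian of the distance function and separately treating the collinear case.
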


\begin{figure}[ht]
\begin{center}
\includegraphics[width=0.45\textwidth]{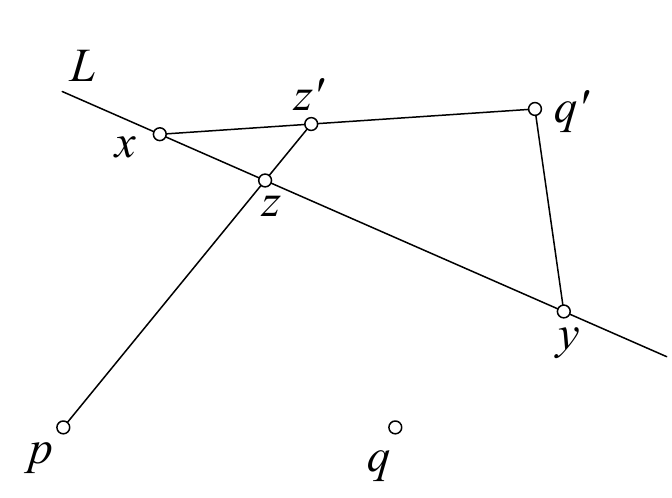}
\caption{An illustration for the proof of Lemma~\ref{lem:ellipse}.}
\label{fig:ellipse}
\end{center}
\end{figure}

\begin{proof}
First, assume that $D < \pi$, and let $\bar{E}$ denote the closed hemisphere centered at $m$.
Observe that for any $w\in\bd(\bar{E})$, we have
$d_s(w,p)+d_s(w,q)= d_s(w,-p)$ + $d_s(w,-q)$ by symmetry, yielding $d_s(w,p)+d_s(w,q) = \pi$. Thus, $E$ is contained in the interior of $\bar{E}$.

Let $x,y \in E$, and let $L$ be the great circle through $x,y$. Note that $x \in E$ implies $-x \notin E$, which yields that $x,y$ are not antipodal points. Let $z$ be any relative interior point of $[x,y]_s$. We show that $z \in E$.

Assume that $L$ does not separate $p$ and $q$. Let $q'$ denote the reflection of $q$ about the great circle $L$. Then the great circle through $[p,z]_s$ intersects $[x,q']_s$ or $[y,q']_s$ at a point $z'$. Without loss of generality, we may assume that $z' \in [x,q']_s$ (see Figure~\ref{fig:ellipse}).
Then by the triangle inequality, we have
\begin{multline*}
d_s(p,z)+d_s(z,q)=d_s(p,z)+d_s(z,q') \leq d_s(p,z)+d_s(z,z')+d_s(z',q')=d_s(p,z')+d_s(z',q') \leq \\
\leq d_s(p,x) + d_s(x,z')+d_s(z',q')=d_s(p,x)+d_s(x,q')=d_s(p,x)+d_s(x,q) \leq D.
\end{multline*}
This yields that $z \in E$. Now, if $L$ separates $p$ and $q$, then a slight modification of this argument proves the same statement, implying that $E$ is convex.

The remaining cases follow from the observation that the interior of the ellipse with foci $p,q$ and parameter $D$ is the complement of the ellipse with foci $-p,-q$ and parameter $2\pi-D$.
\end{proof}

\begin{proof}[Proof of Theorem~\ref{thm:perimeter}]
Let $\underline{\theta} , \overline{\theta}$ denote the minimum and the maximum of the $\theta$-coordinates of the points of $K$, respectively. Since the perimeter of a spherically convex set is a continuous function of the set, we can assume that the boundary of $K$ is $C^2$-class, and there are unique points of $K$ whose $\theta$-coordinates are minimal or maximal.
Then $\bd(K)$ can be written as the union of two $C^2$-class curves $\Gamma_i : [\underline{\theta}, \overline{\theta}] \to S$ ($i=1,2$), parametrized by the value of the $\theta$-coordinates of the points. We choose the indices of the curves such that for $\underline{\theta} < \theta < \overline{\theta}$, the $\varphi$-coordinate of $\Gamma_2(\theta)$ is larger than that of $\Gamma_1(\theta)$.
Thus,
\[
\perim(K)= \int_{\underline{\theta}}^{\overline{\theta}} || \dot{\Gamma}_1(\theta) || \, d \theta + \int_{\underline{\theta}}^{\overline{\theta}} || \dot{\Gamma}_2(\theta) || \, d \theta,
\]
where $|| \cdot||$ denotes the  Euclidean norm.
Similarly, $\bd(\sigma_{L,H}(K))$ can be written as the union of two $C^2$-class curves $\Delta_i : [\underline{\theta}, \overline{\theta}] \to S$ ($i=1,2$), parametrized by the value of the $\theta$-coordinates of the points such that for $\underline{\theta} < \theta < \overline{\theta}$, the $\varphi$-coordinate of $\Delta_2(\theta)$ is larger than that of $\Delta_1(\theta)$. Hence,
\[
\perim(K)= \int_{\underline{\theta}}^{\overline{\theta}} || \dot{\Delta}_1(\theta) || \, d \theta + \int_{\underline{\theta}}^{\overline{\theta}} || \dot{\Delta}_2(\theta) || \, d \theta.
\]

We intend to show that for any $\underline{\theta} \leq \theta \leq \overline{\theta}$, we have
\[
|| \dot{\Gamma}_1(\theta) || + || \dot{\Gamma}_2(\theta) || \geq || \dot{\Delta}_1(\theta) || + || \dot{\Delta}_2(\theta) ||.
\]
To do this, it suffices to show that if $0 \leq \theta \leq \theta'$ or $\theta \leq \theta' \leq 0$ are sufficiently close, then
\begin{equation}\label{eq:perimdiff}
d_s(\Gamma_1(\theta'),\Gamma_1(\theta)) + d_s(\Gamma_2(\theta'),\Gamma_2(\theta)) \geq d_s(\Delta_1(\theta'),\Delta_1(\theta)) + d_s(\Delta_2(\theta'),\Delta_2(\theta)).
\end{equation}
Without loss of generality, we may assume that $0 \leq \theta < \theta'$. For $i=1,2$, let $p_i=\Gamma_i(\theta)$, $p_i'= \Gamma_i(\theta')$, $q_i=\Delta_i(\theta)$ and $q_i'=\Delta_i(\theta')$. Note that by the definition of $\sigma_{L,H}$, we have $\varphi_{p_2}-\varphi_{p_1} = \varphi_{q_2}-\varphi_{q_1}$ and $\varphi_{p_2'}-\varphi_{p_1'} = \varphi_{q_2'}-\varphi_{q_1'}$. Furthermore, by the angular monotonicity property, the first quantity is not smaller than the second quantity.
Similarly to the proof of Theorem~\ref{thm:planarconvexity}, we may assume that $p_1=q_1$, $p_2=q_2$ and $p_1'=p_2'$, implying also that $q_1'=q_2'$, with $\varphi_{q_1'}= \varphi_{q_2'} =0$.
Since $0 \leq \theta < \theta'$ is sufficiently small, we can assume that $d_s(q_1,q_1')+d_s(q_2,q_2')=D<\pi$. Thus, by Lemma~\ref{lem:ellipse} the ellipse $E$ with foci $p_1,p_2$ and parameter $D$ is convex, and by symmetry, the distance curve $C_{\theta'}$ supports $E$ at $q_1'$. Hence, $p_1'=p_2'$ does not lie in the interior of $E$, implying (\ref{eq:perimdiff}).
\end{proof}

\subsection{Monotonicity of the diameter under Steiner symmetrization}\label{subsec:diameter}

For a spherically convex set $K\subset S$, let $\diam(K)$ denote the diameter of $K$.

\begin{theorem}\label{thm:diameter}
Let $K$ be a spherically convex set contained in $S$ and satisfying the angular monotonicity property. Then
\[
\diam(\sigma_{L,H}(K)) \leq \diam(K).
\]
\end{theorem}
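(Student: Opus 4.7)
The plan is to exhibit, for any diameter-realizing pair $q_1, q_2 \in \sigma_{L,H}(K)$, a pair $p_1, p_2 \in K$ with $d_s(p_1,p_2) \geq d_s(q_1,q_2)$, which gives the theorem. Working in the normal polar coordinates of Subsection~\ref{subsec:prelim}, the spherical cosine formula for any $p_i \in C_{\theta_i}$ is
\[
\cos d_s(p_1,p_2) \;=\; \cos\theta_1\cos\theta_2\cos(\varphi_{p_1}-\varphi_{p_2}) + \sin\theta_1\sin\theta_2,
\]
which (since $\theta_i \in (-\pi/2, \pi/2)$ and $|\varphi_{p_1} - \varphi_{p_2}| < \pi$) is strictly decreasing in the angular separation $|\varphi_{p_1} - \varphi_{p_2}|$. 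Thus it suffices to find $p_i \in K \cap C_{\theta_i}$ whose angular separation matches or exceeds that of $q_1, q_2$.

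Let $\ell_i$ be the common angular length of $K\cap C_{\theta_i}$ and $\sigma_{L,H}(K)\cap C_{\theta_i}$, so the latter arc occupies $[-\ell_i/2,\ell_i/2]$ in angular coordinate. Because $d_s$ is strictly monotone in angular separation, each $q_i$ must lie at an endpoint, and the $H$-symmetry of $\sigma_{L,H}(K)$ makes all four sign combinations $(\pm\ell_1/2,\pm\ell_2/2)$ available as pairs in the symmetral. The maximizer $(q_1,q_2)$ therefore pairs opposite signs, so $|\varphi_{q_1}-\varphi_{q_2}|=(\ell_1+\ell_2)/2$.

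Writing $K\cap C_{\theta_i}=[a_i,b_i]$ in angular coordinates with $b_i-a_i=\ell_i$, and picking $p_1,p_2$ at endpoints of these intervals to maximize angular separation, the identities $\max(x,y)=\tfrac12(x+y+|x-y|)$ and $\min(x,y)=\tfrac12(x+y-|x-y|)$ yield
\[
|\varphi_{p_1}-\varphi_{p_2}| \;=\; \max(b_1,b_2)-\min(a_1,a_2) \;=\; \frac{\ell_1+\ell_2}{2} + \frac{|b_1-b_2|+|a_1-a_2|}{2} \;\geq\; \frac{\ell_1+\ell_2}{2},
\]
which is the required bound. The only delicate point is justifying that the diameter-realizing pair in $\sigma_{L,H}(K)$ sits at endpoints of the arcs on $C_{\theta_1}, C_{\theta_2}$ on opposite sides of $H$; this rests on the $H$-symmetry of $\sigma_{L,H}(K)$ together with the strict monotonicity of $d_s$ in angular separation. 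Notably, the angular monotonicity hypothesis on $K$ is not directly exploited in this approach beyond ensuring that $K$ satisfies the connectedness property making $\sigma_{L,H}(K)$ well-defined.
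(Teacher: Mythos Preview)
Your approach is essentially the paper's: for the diameter-realizing pair $q_1,q_2$ in the symmetral, locate a pair in $K$ on the same distance curves with at least as large angular separation, and conclude via monotonicity of $d_s$ in that separation. The paper phrases the monotonicity geometrically (closest/farthest points on the circle $\hat C_q$), you via the spherical cosine formula; the paper's ``preimage'' points $\bar p,\bar q,\bar p',\bar q'$ are exactly the endpoints $b_1,a_2,a_1,b_2$ in your notation, so the two proofs compare the same pairs.

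There is, however, a slip in your displayed computation. The equality $|\varphi_{p_1}-\varphi_{p_2}|=\max(b_1,b_2)-\min(a_1,a_2)$ is false when the intervals are nested: for $[a_1,b_1]=[0,10]$ and $[a_2,b_2]=[3,5]$ the right side is $10$, but the maximum of $|x-y|$ with $x\in[a_1,b_1]$, $y\in[a_2,b_2]$ is $7$. The correct expression for the maximal angular separation is $\max(|b_1-a_2|,|b_2-a_1|)$, and since $(b_1-a_2)+(b_2-a_1)=\ell_1+\ell_2$, the larger of the two (hence also its absolute value) is at least $(\ell_1+\ell_2)/2$. With this one-line replacement your argument is complete. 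Your closing observation that angular monotonicity is not actually used beyond the connectedness property is correct, and it applies equally to the paper's proof.
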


\begin{proof}
Let $[p,q]_s$ be a diameter of $\sigma_{L,H}(K)$, and let $C_p$ and $C_q$ denote the distance curves with axis $L$ passing through $p$ and $q$, respectively.
Without loss of generality, we may assume that $\varphi_p \geq 0$ and $\varphi_q \leq 0$, and $|\varphi_p| \geq |\varphi_q|$.
Let $p'$ and $q'$ denote the reflections of $p,q$ about $H$, respectively. Then $p,q,p',q' \in \bd(\sigma_{L,H}K)$,  and $[p',q']_s$ is also a diameter of $\sigma_{L,H}(K)$. 

Let $\bar{p}, \bar{p}' \in \bd(K)$ denote the `preimages' of $p,p'$, respectively, under $\sigma_{L,H}$; that is, the rotation about the poles that maps $C_p \cap \sigma_{L,H}(K)$ into $C_p \cap K$, maps $p$ into $\bar{p}$ and $p'$ into $\bar{p}'$. We define the points $\bar{q}, \bar{q}' \in \bd(K)$ similarly. To prove Theorem~\ref{thm:diameter}, it suffices to show that 
\begin{equation}\label{eq:diameter}
d_s(p,q)=d_s(p',q') \leq \max \{ d_s(\bar{p},\bar{q}), d_s(\bar{p}',\bar{q}') \}.
\end{equation}

Let $y,y'$ denote the points of $C_q$ satisfying $\varphi_y=\varphi_{\bar{p}}$ and $\varphi_{y'}=\varphi_{\bar{p}'}$, and let $z,z'$ denote the points of the spherical circle $\hat{C}_q$ containing $C_q$ and opposite to $y,y'$, respectively. Note that for any point $u \in \hat{C}_q$, we have $d_s(\bar{p},y) \leq d_s(\bar{p},u) \leq d_s(p,z)$ with equality on the left if $u=y$ and on the right if $u=z$, and as we move $u$ on $\hat{C}_q$ towards $z$, then $d_s(p,u)$ strictly increases. The same statement holds if we replace $\bar{p}, y,z$ by $\bar{p}', y', z'$, respectively. Since $z,z' \notin S$, this yields that if $G \subset S$ is an arc of angular length $2|\varphi_q|$ with endpoints $u,v$, where $\varphi_u \geq \varphi_v$, then $\max \{ d_s(p,v), d_s(q,u) \}$ is minimal if the midpoints of this arc and the arc of $C_p$ with endpoints $\bar{p}, \bar{p}'$ have the same $\varphi$-coordinate. This proves (\ref{eq:diameter}).
\end{proof}

\subsection{Approximating spherical caps by subsequent Steiner symmetrizations}\label{subsec:convergence}

Gross \cite{Gross-1917} proved that for any convex body $K$ in $\R^n$, a Euclidean ball can be approximated arbitrarily well, measured in Hausdorff distance, by applying subsequent Steiner symmetrizations to $K$. Our next result is an analogue of this statement for centrally symmetric convex sets and spherical Steiner symmetrizations.

\begin{theorem}\label{thm:convergence}
Let $K \in \K_S$ be a $c$-symmetric convex disk, and let $\mathcal{F}_K$ denote the family of $c$-symmetric convex disks that can be obtained from $K$ by finitely many subsequent Steiner symmetrizations $\sigma_{L,H}$ with $L \cap H = \{ c \}$. Then there is a sequence $\{K_m\}_{m=1}^\infty$ of $c$-symmetric convex disks with $K_m \in \mathcal{F}_K$ such that $K_m\to D_c(K)$ in the Hausdorff metric as $m\to\infty$, where $D_c(K)$ is the spherical cap centered at $c$ with $\area(K)=\area(D_c(K))$.
\end{theorem}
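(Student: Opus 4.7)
The plan is to adapt the classical Gross compactness argument via a monotone functional. A convenient choice is
\[
\rho(M):=\int_M d_s(x,c)^2\, dA(x) \qquad \text{for measurable } M\subseteq S,
\]
which should be non-increasing under every admissible $\sigma_{L,H}$ (strictly so unless the body is already $H$-symmetric), and, subject to a fixed area constraint, strictly minimized by $D_c(K)$.

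First I would check that $\mathcal{F}_K$ is closed under admissible Steiner symmetrizations and relatively compact in the Hausdorff metric. Each $K'\in\mathcal{F}_K$ is $c$-symmetric, hence contains $c$, so by Remarks~\ref{rem:validity2} and~\ref{rem:centralsymmetry} it has the connectedness property and the angular monotonicity property. Theorem~\ref{thm:planarconvexity} then gives that $\sigma_{L,H}(K')$ is convex, Theorem~\ref{thm:volume} that it has the same area, and the equality of arc lengths on $C_\theta$ and $C_{-\theta}$ forced by the $c$-symmetry of $K'$ transfers to $\sigma_{L,H}(K')$, so the latter is again $c$-symmetric. Theorem~\ref{thm:diameter} bounds $\diam(K')\leq\diam(K)<\pi$, so $K'\subseteq B_s(c,\diam(K)/2)$, a compact subset of $S$; Blaschke's selection theorem then delivers relative compactness.

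Next I would establish the two key properties of $\rho$. On a distance curve $C_\theta$ parametrized by $\varphi$, $d_s(\cdot,c)^2=\arccos(\cos\theta\cos\varphi)^2$ is an even function of $\varphi$ that is strictly increasing in $|\varphi|$, while the arclength element equals $\cos\theta\, d\varphi$; hence for closed arcs of fixed length on $C_\theta$, the integral of $d_s(\cdot,c)^2$ is strictly minimized by the arc centered on $H$. By Fubini (as in the proof of Theorem~\ref{thm:volume}), this yields $\rho(\sigma_{L,H}(K'))\leq\rho(K')$, with strict inequality whenever a positive-measure set of $\theta$ witnesses that $K'\cap C_\theta$ is not centered on $H$, i.e., whenever $K'$ is not $H$-symmetric. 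On the other hand, the bathtub principle applied to the level sets of $d_s(\cdot,c)$ on $\Sph^2$ shows that, among measurable subsets of area $\area(K)$, $\rho$ is strictly minimized by the sublevel set $D_c(K)$.

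Finally I would set $\rho_*:=\inf\{\rho(K'):K'\in\mathcal{F}_K\}$ and pick $K_m\in\mathcal{F}_K$ with $\rho(K_m)\to\rho_*$. A Hausdorff-convergent subsequence $K_{m_k}\to K_*$ yields a $c$-symmetric $K_*\in\K_S$ of area $\area(K)$ with $\rho(K_*)=\rho_*$ by continuity of $\rho$. If $K_*\neq D_c(K)$, then $K_*$ is not rotationally invariant about $c$, so some great circle $H$ through $c$ fails to be an axis of symmetry of $K_*$; taking $L$ perpendicular to $H$ at $c$ gives $\rho(\sigma_{L,H}(K_*))<\rho_*$ by the strict monotonicity above. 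Hausdorff continuity of $K'\mapsto\sigma_{L,H}(K')$ on $c$-symmetric convex disks (a short lemma via the explicit distance-curve definition) then forces $\rho(\sigma_{L,H}(K_{m_k}))<\rho_*$ for $k$ large, contradicting the definition of $\rho_*$ since $\sigma_{L,H}(K_{m_k})\in\mathcal{F}_K$. Thus every Hausdorff cluster point of $(K_m)$ equals $D_c(K)$, so $K_m\to D_c(K)$.

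The hard part will be combining the strict decrease of $\rho$ on non-$H$-symmetric bodies with the Hausdorff continuity of the map $K'\mapsto\sigma_{L,H}(K')$; both are elementary given the explicit distance-curve description of $\sigma_{L,H}$ and the parametrization established in Subsection~\ref{subsec:prelim}, but each requires its own careful argument beyond a direct appeal to the theorems proved earlier in the paper.
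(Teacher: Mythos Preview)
Your argument is correct and is precisely the kind of compactness-plus-monotone-functional argument that the paper points to. The paper's own proof is essentially a one-line reference: it notes that $c$-symmetric convex disks stay $c$-symmetric and convex under every $\sigma_{L,H}\in\Sigma_c$, invokes Blaschke selection, and then says the proof of \cite[Theorem~9.1]{GruberBook} carries over with straightforward modifications. You have in effect written out one such modification explicitly.

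The one genuine difference is your choice of decreasing functional. Gruber's argument, and the paper's parallel Theorem~\ref{thm:convergence2} and Lemma~\ref{lem:convergence2}, work with the circumradius: one minimizes $\rho(K')$ over $\mathcal{F}_K$, argues that a non-cap limit admits a finite sequence of symmetrizations strictly shrinking the circumdisk, and derives a contradiction. You instead use the spherical second moment $\int_M d_s(x,c)^2\,dA(x)$, which has two pleasant features in this setting: the strict decrease under a single $\sigma_{L,H}$ is immediate from the even-and-increasing dependence on $\varphi$ along each $C_\theta$, and the identification of the minimizer as $D_c(K)$ is just the bathtub principle. The circumradius route is more geometric but requires the extra work of Lemma~\ref{lem:convergence2} to manufacture a strict drop; your route is analytically cleaner but, as you note, still needs the Hausdorff continuity of $K'\mapsto\sigma_{L,H}(K')$ on $c$-symmetric convex disks to pass the strict inequality from the limit $K_*$ back to the approximants $K_{m_k}$. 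That continuity is indeed elementary from the distance-curve description, so there is no gap.
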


We note that an important part of the proof of Theorem~\ref{thm:convergence} is that 
the image of any $c$-symmetric convex disk under any Steiner symmetrization $\sigma_{L,H}$ with $H \cap L = \{ c \}$ is a $c$-symmetric convex disk. 

\begin{proof}
Observe that a $c$-symmetric convex disk satisfies the angular monotonicity property for any Steiner symmetrization $\sigma_{L,H}$ with $L \cap H = \{ c \}$, implying that such a Steiner symmetrization $\sigma_{L,H}(K)$ is also a $c$-symmetric convex disk. Furthermore, Blaschke's selection theorem holds for the space of spherically convex bodies in $\Sph^2$ (see, for example, \cite[page 88]{GruberBook}). From here the proof of \cite[Theorem 9.1]{GruberBook} can be modified in a straightforward way to obtain the result.
\end{proof}

\begin{definition}\label{defn:applicableseq}
A sequence of Steiner symmetrizations $\{ \sigma_{L_i,H_i}\}_{i=1}^k$ in $\Sph^2$
is called \emph{applicable} to a convex disk $K \subset \Sph^2$ if:
\begin{itemize}
\item[(i)] $K$ is contained in the open hemisphere centered at the intersection point of $H_1$ and $L_1$, and it satisfies the angular monotonicity property with respect to $\sigma_{L_1,H_1}$; and 
\item[(ii)] for any $i=1,2,\ldots,k-1$, $\sigma_{L_{i-1},H_{i-1}}(\ldots \sigma_{L_1,H_1}(K))$ is contained in the open hemisphere centered at the intersection point of $L_i \cap H_i$, and it satisfies the angular monotonicity property with respect to $\sigma_{L_i,H_i}$.
\end{itemize}
\end{definition}

We recall that according to the notation introduced in Subsection~\ref{subsec:prelim}, $L_i$ is a half great circle and $H_i$ is an open half of a great sphere, intersecting in a unique point.

\begin{theorem}\label{thm:convergence2}
Let $K \subset \Sph^2$ be a convex disk of diameter less than $\frac{\pi}{2}$. Let $\mathcal{F}_K$ denote the family of the congruent copies of images of $K$ under finite sequences of Steiner symmetrizations applicable to $K$. There exists a sequence $\{K_m \}_{m=1}^{\infty}\subset\mathcal{F}(K)$ such that $K_m \to D(K)$ in the Hausdorff metric, where $D(K)$ is a spherical cap with $\area(D(K)) = \area(K)$.
\end{theorem}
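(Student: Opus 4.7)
The plan is to adapt the Gross-type compactness argument that underpins the proof of Theorem~\ref{thm:convergence}: minimize a monotone functional (here, the perimeter) over $\mathcal{F}_K$, extract a Hausdorff limit via the Blaschke selection theorem, and show that this limit can only be a spherical cap.

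For every $K'\in\mathcal{F}_K$, Theorems~\ref{thm:volume}, \ref{thm:perimeter} and \ref{thm:diameter}, together with the congruence-invariance of area, perimeter and diameter, give $\area(K')=\area(K)$, $\perim(K')\le\perim(K)$ and $\diam(K')\le\diam(K)<\frac{\pi}{2}$. Set $p^*:=\inf\{\perim(K'):K'\in\mathcal{F}_K\}$; by the spherical isoperimetric inequality, $p^*\ge\perim(D(K))$. Select a minimizing sequence $\{L_m\}\subset\mathcal{F}_K$ and apply further congruences so that every $L_m$ lies in a common closed spherical cap of radius strictly less than $\frac{\pi}{2}$. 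The spherical Blaschke selection theorem (cf.\ \cite[p.~88]{GruberBook}), combined with the Hausdorff continuity of area and of perimeter on convex disks, yields a subsequence (still denoted $L_m$) converging to a convex disk $K^*$ with $\area(K^*)=\area(K)$, $\diam(K^*)<\frac{\pi}{2}$, and $\perim(K^*)=p^*$.

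The crux is to show that $K^*$ is a spherical cap. Suppose not; then $\perim(K^*)>\perim(D(K))$, and I would derive a contradiction by exhibiting an applicable Steiner symmetrization $\sigma_{L,H}$ with $\perim(\sigma_{L,H}(K^*))<\perim(K^*)$: by Hausdorff continuity of $\sigma_{L,H}$ and of the perimeter on convex disks, it then follows that $\perim(\sigma_{L,H}(L_m))<p^*$ for all sufficiently large $m$, and since $\sigma_{L,H}(L_m)\in\mathcal{F}_K$ this contradicts the definition of $p^*$. To construct $\sigma_{L,H}$, pick a diameter $[u,v]_s$ of $K^*$, take $L$ to be the half great circle through $u$ and $v$, and let $H$ be the half great circle orthogonal to $L$ at the midpoint $m$ of $[u,v]_s$. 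The condition $\diam(K^*)<\frac{\pi}{2}$ ensures $K^*$ lies in the open hemisphere centered at $m$. A standard first-variation argument shows that because $[u,v]_s$ realizes the diameter, every supporting line of $K^*$ at $u$ (respectively at $v$) is orthogonal to $L$; consequently, the hypothesis of Lemma~\ref{lem:monotonicity} is verified, $K^*$ satisfies the angular monotonicity property with respect to $(L,H)$, and $\sigma_{L,H}$ is applicable to $K^*$.

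The main obstacle I expect is the strict perimeter drop, i.e.\ the equality case of Theorem~\ref{thm:perimeter}: the symmetrization $\sigma_{L,H}$ constructed above automatically gives $\perim(\sigma_{L,H}(K^*))\le\perim(K^*)$, with equality possible when $K^*$ is already symmetric about $H$. Tracing the proof of Theorem~\ref{thm:perimeter}, equality in the key inequality~(\ref{eq:perimdiff}) forces the spherical ellipse of Lemma~\ref{lem:ellipse} to touch the distance curve $C_{\theta'}$ at the specified point for every admissible $\theta,\theta'$. By varying the choice of diameter of $K^*$, and making essential use of the congruences built into the definition of $\mathcal{F}_K$ to reposition the body between symmetrizations, this rigid configuration along every diameter is possible only if $K^*$ is rotationally symmetric about $m$, i.e.\ a spherical cap. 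Carrying out this equality analysis rigorously, and then transferring the strict inequality from $K^*$ to the nearby approximants $L_m$ by continuity, is the principal technical point of the proof.
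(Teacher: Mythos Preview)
Your strategy differs from the paper's: you minimize the perimeter over $\mathcal{F}_K$, whereas the paper minimizes the circumradius and relies on an auxiliary quantitative lemma (Lemma~\ref{lem:convergence2}). The perimeter route is natural, but as written there is a genuine gap at exactly the point you flag.

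The difficulty is twofold. First, your single diameter-based symmetrization need not give a strict drop. If $K^*$ has a unique diameter and is already symmetric about the perpendicular bisector $H$ of that diameter (think of a non-circular $H$-symmetric ``spherical ellipse''), then $\sigma_{L,H}(K^*)=K^*$ and the perimeter is unchanged; there is no second diameter to ``vary'', and your appeal to congruences does not produce a different applicable symmetrization. To escape this, you would need a construction of an applicable $\sigma_{L',H'}$ along \emph{some other} line that strictly reduces the perimeter of an $H$-symmetric non-cap, and you have not supplied one. Second, even granting a strict drop on $K^*$, you cannot simply transfer applicability to the approximants $L_m$: the angular monotonicity property hinges on the existence of supporting lines at the two endpoints of $L\cap L_m$ whose angle sum is exactly $\pi$, a boundary condition that is not stable under Hausdorff perturbation for a \emph{fixed} choice of $(L,H)$. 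So $\sigma_{L,H}(L_m)$ may not even be defined in your sense.

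The paper circumvents both issues. In the proof of Lemma~\ref{lem:convergence2} it shows, by an intermediate-value argument on the angle sum $\alpha_1(z)+\alpha_2(z)$, that for \emph{any} line $H_i$ through the circumcenter one can find a perpendicular $L_i$ with $L_i\cap H_i$ inside the body so that $\sigma_{L_i,H_i}$ is applicable; this frees the choice of $H_i$ and allows one to reflect a boundary gap successively around the entire circumcircle. The upshot is a uniform quantitative decrease $\delta(\rho,\varepsilon)>0$ of the circumradius, depending lower semicontinuously on $(\rho,\varepsilon)$. This uniformity is precisely what lets the argument act directly on $K_m$ rather than on the limit $K^*$, sidestepping the applicability-transfer problem. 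If you want to salvage the perimeter approach, you would need an analogous lemma: for every non-cap of diameter below $\pi/2$, produce a finite applicable sequence that decreases the perimeter by an amount bounded below by a lower semicontinuous function of the data.
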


The proof is based on the following lemma.

\begin{lemma}\label{lem:convergence2}
Let $K$ be a convex disk of diameter less than $\frac{\pi}{2}$ with circumdisk $D$ and circumradius $\rho < \frac{\pi}{2}$. Assume that some $p \in \bd(D)$ is at distance at least $\varepsilon > 0$ from $K$. Then there is some lower semicontinuous function $\delta=\delta(\rho,\varepsilon) > 0$, such that there is a finite sequence of Steiner symmetrizations $\{\sigma_{L_i, H_i}\}_{i=1}^k$ applicable to $K$ 
satisfying the property that $\sigma_{L_k,H_k}(\ldots \sigma_{L_1,H_1}(K))$ is contained in the spherical cap of radius $\rho-\delta$ and concentric with $D$.
\end{lemma}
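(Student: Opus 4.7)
The plan is to reduce to the $o'$-centrally symmetric case by two specific symmetrizations, and then invoke Theorem~\ref{thm:convergence} together with the area deficit of $K$ inside $D$ coming from the $\varepsilon$-gap between $K$ and $p$.

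\textbf{Centralization.} The circumcenter $o'$ lies in $K$, as $o'$ is in the spherical convex hull of the touching points of $K \cap \bd(D)$, hence in $K$ itself. Set $c = o'$. By a continuity (intermediate value) argument on the direction of $L$ around $o'$, there is a great half circle $L_1$ through $o'$ such that $K$ satisfies the angular monotonicity property with respect to $(L_1, H_1)$, where $H_1$ is the great half circle through $o'$ perpendicular to $L_1$: indeed, as the direction of $L_1$ varies by $\pi$, the roles of the two endpoints of $K \cap L_1$ swap, so the difference between the angles that the supporting lines at those endpoints make with $L_1$ flips sign and vanishes for some $L_1$. Apply $\sigma_{L_1, H_1}$ to obtain the $H_1$-symmetric convex disk $K_1$. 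Next apply $\sigma_{L_2, H_2}$ with $(L_2, H_2) = (H_1, L_1)$: by the $H_1$-symmetry of $K_1$, the supporting lines to $K_1$ at the endpoints of the segment $K_1 \cap H_1$ are perpendicular to $H_1$, hence symmetric about its midpoint by Remark~\ref{rem:equivalence}, and Lemma~\ref{lem:monotonicity} yields applicability. The image $K_2 = \sigma_{L_2, H_2}(K_1)$ is $L_1$-symmetric by construction, and by pairing distance curves $C'_\tau$ and $C'_{-\tau}$ with axis $H_1$ (which are exchanged by reflection across $H_1$), also $H_1$-symmetric; hence $K_2$ is $o'$-centrally symmetric.

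\textbf{Area bound and convergence.} By Theorem~\ref{thm:volume}, $\area(K_2) = \area(K)$. Since $K$ is disjoint from the open ball $B_s(p, \varepsilon)$,
\[
\area(K) \leq \area(D) - \area(D \cap B_s(p, \varepsilon)) = 2\pi(1 - \cos\rho) - f(\rho, \varepsilon),
\]
where $f(\rho, \varepsilon) > 0$ is the spherical-lune area, continuous in $(\rho, \varepsilon)$. Applying Theorem~\ref{thm:convergence} to $K_2$, a sequence of subsequent Steiner symmetrizations produces bodies converging in Hausdorff metric to $D_{o'}(K_2)$, the spherical cap centered at $o'$ with area $\area(K_2) = \area(K)$. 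Its radius $r$ satisfies $\cos r = 1 - \area(K)/(2\pi) \geq \cos\rho + f(\rho, \varepsilon)/(2\pi)$, giving $\rho - r \geq \rho - \arccos\bigl(\cos\rho + f(\rho, \varepsilon)/(2\pi)\bigr) > 0$. Taking $\delta := (\rho - r)/2$, a finite number of additional symmetrizations ensures $K_k \subseteq B_s(o', r + \delta) = B_s(o', \rho - \delta)$, concentric with $D$. The explicit formula
\[
\delta(\rho, \varepsilon) = \tfrac{1}{2}\Bigl(\rho - \arccos\bigl(\cos\rho + f(\rho, \varepsilon)/(2\pi)\bigr)\Bigr)
\]
is continuous, hence lower semicontinuous, in $(\rho, \varepsilon)$.

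\textbf{Main obstacle.} The principal subtlety lies in verifying the applicability of the first symmetrization for an arbitrary convex disk $K$, which I handle via the continuity argument above; a rigorous implementation requires careful selection of supporting lines at the moving endpoints of $K \cap L_1$, particularly at non-smooth points of $\bd(K)$. Once $K_2$ is centrally symmetric, Remark~\ref{rem:centralsymmetry} guarantees applicability of all subsequent symmetrizations, and Theorem~\ref{thm:convergence} delivers the desired Hausdorff convergence, with the quantitative reduction of the circumradius derived from the area deficit $f(\rho, \varepsilon)$.
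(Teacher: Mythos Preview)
Your overall strategy is valid and genuinely different from the paper's. The paper argues directly: it covers $\bd(D)$ by finitely many congruent arcs $G_0,G_1,\ldots,G_k$ (with $G_0$ the given gap), and for each $i$ performs a symmetrization with hyperplane $H_i$ through the circumcenter chosen so that reflection in $H_i$ carries $G_0$ to $G_i$; a compactness argument shows each step keeps a positive distance from all the arcs handled so far. The paper finds an applicable symmetrization at each step by fixing $H_i$ and sliding the perpendicular foot $z$ along $K\cap H_i$ until the interior angles satisfy $\alpha_1(z)+\alpha_2(z)=\pi$ (Remark~\ref{rem:equivalence}); crucially the symmetrization center $z$ is not the circumcenter. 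Your route instead keeps the center at $o'$ throughout, makes $K$ centrally symmetric in two steps, and then defers to Theorem~\ref{thm:convergence}; the quantitative $\delta(\rho,\varepsilon)$ drops out of an area deficit and is explicitly continuous. This is cleaner and more conceptual, at the price of depending on Theorem~\ref{thm:convergence}.

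There is, however, a genuine (though fixable) error in your first intermediate-value step. The condition for angular monotonicity, via Lemma~\ref{lem:monotonicity} and Remark~\ref{rem:equivalence}, is that the two supporting lines at the endpoints $q_1,q_2$ of $K\cap L_1$ are point-symmetric about the midpoint of $[q_1,q_2]_s$; equivalently, the interior angles $\alpha_1,\alpha_2$ of the triangle $\conv_s\{q_1,q_2,z\}$ satisfy $\alpha_1+\alpha_2=\pi$. You track the \emph{difference} $\alpha_1-\alpha_2$, but when $L_1$ rotates by $\pi$ the endpoints swap and the orientation of $L_1$ reverses, so each $\alpha_i$ is replaced by $\pi-\alpha_{3-i}$; hence $\alpha_1-\alpha_2$ is $\pi$-periodic and need not vanish. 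The correct quantity is $h(\theta)=\alpha_1(\theta)+\alpha_2(\theta)-\pi$, which does satisfy $h(\theta+\pi)=-h(\theta)$, and then IVT gives the desired $L_1$. With this correction your centralization goes through (for the second step, the $H_1$-symmetry of $K_1$ indeed forces the perpendicular supporting lines at the endpoints of $K_1\cap H_1$, and $\frac{\pi}{2}+\frac{\pi}{2}=\pi$), and the remainder of your argument is correct: monotonicity of symmetrization keeps everything inside $D$, Theorem~\ref{thm:convergence} gives some $K_m\in\mathcal{F}_{K_2}$ inside $B_s(o',\rho-\delta)$, and your explicit $\delta(\rho,\varepsilon)$ is continuous.
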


\begin{proof}[Proof of Lemma~\ref{lem:convergence2}]
By the conditions of the lemma, there is a closed circular arc $G_0$ in $\bd(C)$ such that its distance from $K$ is at least $\varepsilon > 0$. Let $G_i \subset \bd(D)$, $i=1,2,\ldots, k$, be a family of congruent copies of $G$ which satisfies $\bigcup_{i=0}^k G_i= \bd(D)$. Let $H_i$ be the line through the center $c$ of $D$ such that the reflected copy of $G$ to $H_i$ is $G_i$. By compactness, there is a value $\delta_1>0$ such that for any line $L_1$ perpendicular to $H_1$ and $L_1 \cap H_1 \subset D$, if $\sigma_{L_1,H_1}$ is applicable to $K$, then the distance of both $G=G_0$ and $G_1$  from $\sigma_{L_1,H_1}(K)$ is at least $\delta_1$. Similarly, if $K' \subset D$ has the property that its distance from both $G_0$ and $G_1$ is at least $\delta_1$, then there is some $\delta_2 > 0$ such that for any line $L_2$ perpendicular to $H_2$ and $L_2 \cap H_2 \subset D$, if $\sigma_{L_2,H_2}$ is applicable to $K'$, then the distance of $G_0,G_1, G_2$ from $\sigma_{L_2,H_2}(K')$ is at least $\delta_2$. In general, if for any $1 \leq i \leq k-1$, some convex disk $K' \subset D$ has the property that its distance from $G_0, G_1, \ldots, G_i$ is at least $\delta_i > 0$ from $K'$, then there is some $\delta_{i+1} > 0$ such that for any line $L_{i+1}$ perpendicular to $H_{i+1}$ and $L_{i+1} \cap H_{i+1} \subset D$, if $\sigma_{L_{i+1},H_{i+1}}$ is applicable to $K'$, then the distance of $G_0,G_1, \ldots, G_{i+1}$ from $\sigma_{L_{i+1},H_{i+1}}(K')$ is at least $\delta_{i+1}$.

Thus, it remains to show that there is a sequence $\sigma_{L_i \cap H_i}$ of Steiner symetrizations applicable to $K$ such that $L_i \cap H_i \subset D$.
By continuity, without loss of generality we may assume that $K$ is smooth. Let $[p,q]_s = K \cap H_1$, and let $\Gamma_1$ and $\Gamma_2$ denote the two curves of $\bd(K)$ that connect $p,q$. Note that since $D$ is the circumdisk of $K$, the circumcenter $c$ of $K$ lies on $[p,q]_s$. For any $z \in [p,q]_s$, let $L_z$ denote the line through $z$ that is perpendicular to $H_1$. Let $z \in [p,q]_s \setminus \{p,q\}$. Then $L_z$ intersects $\Gamma_1$ and $\Gamma_2$ in unique points, which we denote by $w_1(z)$ and $w_2(z)$, respectively. Let $K(z)$ denote the closure of the component of $K \setminus L_z$ containing $q$, and for $i=1,2$, let $\alpha_i$ denote the angle of $K(z)$ at $w_i(z)$. Observe that if $z$ is close to $p$, then $\alpha_1(z)+\alpha_2(z)$ is close to $2\pi$, and if $z$ is close to $q$, then $\alpha_1(z)+\alpha_2(z)$ is close to $0$. Thus, by continuity, there is some $z \in [p,q]_s \setminus \{p,q\}$ such that $\alpha_1(z) + \alpha_2(z) = \pi$. But then $K$ is contained in the open hemisphere centered at $z$, and by Lemma~\ref{lem:monotonicity}, $K$ satisfies the angular monotonicity with respect to $\sigma_{L_z,H_1}$.
As $z \in \sigma_{L_z,H_1}(K)$ due to the construction, by continuing this process we can construct the desired sequence of Steiner symmetrizations.

Finally, the lower semicontinuity of the value $\delta(\varepsilon,\rho)$ follows from the method of construction. 
\end{proof}

Before the proof of Theorem~\ref{thm:convergence2}, we recall that every spherically convex body $K$ has a unique circumball which depends continuously on $K$ with respect to Hausdorff distance. 

\begin{proof}[Proof of Theorem~\ref{thm:convergence2}]
For any convex disk $K$, let $\rho(K)$ denote the circumradius of $K$, and let $\varepsilon(K)$ denote the Hausdorff distance between $K$ and its circumdisk.
Let $\rho = \inf \{ \rho(K') : K' \in \mathcal{F}_K\}$.
By Blaschke's selection theorem, there is a convex disk $K'$ with $\rho(K')=\varepsilon$ such that $K'$ is the limit of a sequence $\{K_m \}$ of elements of $\mathcal{F}_K$. If $K'$ is a spherical cap, then we are done. Assuming that $K'$ is not a spherical cap yields that $\varepsilon(K') > 0$. Let $\varepsilon = \varepsilon(K')$ and $0 < \delta < \delta(\rho, \varepsilon)$, where $\delta(\rho,\varepsilon)$ is the function whose existence is proved in Lemma~\ref{lem:convergence2}. Observe that $K_m \to K'$ implies that $\rho(K_m) \to \rho(K')$ and $\varepsilon(K_m) \to \varepsilon(K')$. Thus, by the lower semicontinuity of $\delta(\rho,\varepsilon)$, for all sufficiently large $m$ we have $\delta(\rho(K_m),\varepsilon(K_m)) > \delta$ and $0 \leq \rho(K_m)-\rho < \delta$. But then Lemma~\ref{lem:convergence2} yields the existence of a finite sequence $\sigma_{L_i,H_i}$ ($i=1,2,\ldots,k$) of Steiner symmetrizations applicable to $K_m$ such that 
$\sigma_{L_k,H_k}(\ldots \sigma_{L_1,H_1(K_m)})$ is contained in a spherical cap of radius
\[
\rho(K_m) - \delta(\rho(K_m),\varepsilon(K_m))  < (\rho + \delta) - \delta = \rho, 
\]
contradicting the definition of $\rho$.
\end{proof}

\section{Applications of spherical Steiner symmetrization}\label{sec:appl}

Let us recall that $S$ is an open hemisphere of $\Sph^2$ centered at $c$. In the following, we denote the family of $c$-symmetric convex disks in $S$ by $\K^{\rm sym}_S$, and the family of Steiner symmetrizations $\sigma_{L,H}$ with $L \cap H = \{ c \}$ by $\Sigma_c$ (we remark that according to our notation in Subsection~\ref{subsec:prelim}, we have $H, L \subset S$). We observe that by Remark~\ref{rem:centralsymmetry} and Theorem~\ref{thm:planarconvexity}, for any $K \in \K^{\rm sym}_S$ and $\sigma_{L,H} \in \Sigma_c$, we have $\sigma_{L,H}(K) \in \K^{\rm sym}_S$.

We start this section with a general theorem about finding optimal values of functions defined on $\K^{\rm sym}_S$.

\begin{theorem}\label{general-ineq}
  Assume that $F: \K^{\rm sym}_S \to (0,\infty)$ is nondecreasing (resp., nonincreasing) under symmetrizations $\sigma_{L,H} \in \Sigma_c$,
	and it is upper semicontinuous (resp., lower semicontinuous) with respect to the Hausdorff metric. Then for every $K \in \K^{\rm sym}_S$, we have
    \[
        F(K)\leq F(D_c(K)) \quad (\hbox{resp., } F(K)\geq F(D_c(K))),
    \]
    where $D_c(K)$ is the spherical disk centered at $c$ with $\area(D_c(K))=\area(K)$.
\end{theorem}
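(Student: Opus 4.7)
The plan is to apply Theorem~\ref{thm:convergence} to produce an approximating sequence of Steiner symmetrals converging to $D_c(K)$, and then use the assumed monotonicity and semicontinuity of $F$ to pass to the limit. I will write out the nondecreasing/upper semicontinuous case; the complementary case is identical with all inequalities reversed.

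First, since $K \in \K^{\rm sym}_S$, Theorem~\ref{thm:convergence} provides a sequence $\{K_m\}_{m=1}^\infty$ of $c$-symmetric convex disks in $\mathcal{F}_K$ with $K_m \to D_c(K)$ in the Hausdorff metric, where each $K_m$ is obtained from $K$ by finitely many Steiner symmetrizations $\sigma_{L,H}$ with $L \cap H = \{c\}$, that is, from $\Sigma_c$. Since $F$ is nondecreasing under every symmetrization in $\Sigma_c$, iterating across the finitely many steps producing $K_m$ yields $F(K) \leq F(K_m)$ for every $m \geq 1$.

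Second, I would invoke the upper semicontinuity of $F$ with respect to the Hausdorff metric together with the convergence $K_m \to D_c(K)$ to conclude
\[
F(K) \;\leq\; \limsup_{m\to\infty} F(K_m) \;\leq\; F(D_c(K)),
\]
which is the desired inequality. Note that the area normalization $\area(D_c(K))=\area(K)$ is consistent with this chain because Theorem~\ref{thm:volume} guarantees that every $\sigma_{L,H} \in \Sigma_c$ preserves spherical area, so $\area(K_m)=\area(K)$ for all $m$, and this common value is precisely $\area(D_c(K))$ by the definition of $D_c(K)$ in Theorem~\ref{thm:convergence}.

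Since the argument is little more than a monotone-convergence wrapper around Theorem~\ref{thm:convergence}, I do not expect a serious obstacle; the only point that requires care is correctly pairing the direction of monotonicity with the direction of semicontinuity. The nonincreasing/lower semicontinuous case is handled by the symmetric chain $F(K) \geq F(K_m)$ for all $m$, combined with $\liminf_{m\to\infty} F(K_m) \geq F(D_c(K))$ from lower semicontinuity, yielding $F(K) \geq F(D_c(K))$.
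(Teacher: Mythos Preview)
Your proposal is correct and essentially identical to the paper's own proof: both invoke Theorem~\ref{thm:convergence} to obtain a sequence $K_m\in\mathcal{F}_K$ converging to $D_c(K)$, use the monotonicity of $F$ under $\Sigma_c$ to get $F(K)\leq F(K_m)$, and then apply upper semicontinuity to pass to the limit. Your additional remark about area preservation via Theorem~\ref{thm:volume} is accurate but not needed, since the identification of the limit as $D_c(K)$ is already built into Theorem~\ref{thm:convergence}.
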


\begin{proof}
As in the proof of Theorem~\ref{thm:convergence}, let $\mathcal{F}_K$ denote the family of $c$-symmetric convex disks that can be obtained from $K$ by finitely many subsequent Steiner symmetrizations in $\Sigma_c$. By Theorem \ref{thm:convergence}, there exists a sequence $\{ K_m \}\subset\mathcal{F}_K$ such that $K_m \to D_c(K)$ with respect to the Hausdorff metric. Thus, if $F$ is nondecreasing under symmetrizations $\sigma_{L,H} \in \Sigma_c$, then
$F(K) \leq F(K_m)$ for any value of $m$. On the other hand, from the upper semicontinuity of $F$ it follows that
\[
\limsup_{m \to \infty} F(K_m) \leq F(D_c(K)),
\]
implying Theorem~\ref{general-ineq} in this case. If $F$ is nonincreasing under symmetrizations $\sigma_{L,H} \in \Sigma_c$ and it is lower semicontinuous, then we may apply a similar argument.
\end{proof}

Theorem~\ref{general-ineq}, combined with Theorems~\ref{thm:perimeter} and \ref{thm:diameter}, readily yields the isoperimetric and the isodiametric inequalities for $c$-symmetric convex disks. For the proofs of the general versions of these theorems, see \cite{Levy-1951,Schmidht0, Schmidt1, Schmidt2}.

\begin{corollary}\label{cor:isoperdia}
Among elements of $\K^{\rm sym}_S$ of a given area, spherical caps centered at $c$ have minimal perimeter and diameter.
\end{corollary}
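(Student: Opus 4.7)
The plan is to derive both inequalities as a single uniform application of Theorem~\ref{general-ineq}, once with $F=\perim$ and once with $F=\diam$, in each case using the ``nonincreasing plus lower semicontinuous'' branch of that theorem. Accordingly, for each of the two functionals I need to verify two hypotheses: monotonicity under all symmetrizations in $\Sigma_c$, and lower semicontinuity with respect to the Hausdorff metric.

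For the monotonicity hypothesis the key observation is Remark~\ref{rem:centralsymmetry}: every $K \in \K^{\rm sym}_S$ automatically satisfies the angular monotonicity property with respect to any $\sigma_{L,H} \in \Sigma_c$, since such an $L$ passes through the center of symmetry $c$. Hence Theorem~\ref{thm:perimeter} applies and gives $\perim(\sigma_{L,H}(K)) \leq \perim(K)$, while Theorem~\ref{thm:diameter} applies and gives $\diam(\sigma_{L,H}(K)) \leq \diam(K)$. Thus both perimeter and diameter are nonincreasing on $\K^{\rm sym}_S$ under all symmetrizations in $\Sigma_c$.

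For the semicontinuity hypothesis, I would invoke the spherical analogue of a standard Euclidean fact: both $\perim$ and $\diam$ are continuous, and therefore lower semicontinuous, on the space of spherical convex disks in $S$ equipped with the Hausdorff metric. For the diameter this is immediate, since $\diam$ is $2$-Lipschitz in the Hausdorff distance on any family of compact sets. For the perimeter it follows by the usual approximation of a spherically convex disk by inscribed spherical polygons, or equivalently by transferring the question to the tangent plane $T_c \Sph^2$ via the central projection $\pi_c$ (whose Jacobian is bounded above and below on compact subsets of $S$) and invoking the classical continuity of Euclidean perimeter on convex bodies.

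With both hypotheses verified, Theorem~\ref{general-ineq} applied to $F=\perim$ yields $\perim(K) \geq \perim(D_c(K))$, and applied to $F=\diam$ yields $\diam(K) \geq \diam(D_c(K))$ for every $K \in \K^{\rm sym}_S$, which is exactly the asserted pair of inequalities. I do not expect any real obstacle: the substantive content has already been isolated in Theorems~\ref{thm:perimeter}, \ref{thm:diameter}, \ref{thm:convergence}, and \ref{general-ineq}, so the only point requiring care is the (routine but worth noting) lower semicontinuity of perimeter on spherical convex disks.
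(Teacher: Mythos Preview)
Your proposal is correct and matches the paper's own argument: the corollary is stated as an immediate consequence of Theorem~\ref{general-ineq} combined with Theorems~\ref{thm:perimeter} and \ref{thm:diameter}, which is precisely what you do. Your added remarks on lower semicontinuity of perimeter and diameter make explicit a hypothesis that the paper leaves implicit, but otherwise the approaches are identical.
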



\subsection{A variant of a theorem of Sas for $c$-symmetric convex disks in $\Sph^2$}\label{subsec:Sas}

The main concept of this subsection is the following.

\begin{definition}\label{defn:volumeratio}
Let $K \subset \Sp$ be a spherically convex body and let $N \geq n+1$. Then by $A_N(K)$ we denote the supremum of the volumes of all spherically convex polytopes in $K$ with at most $N$ vertices.
\end{definition}

We note that by compactness, the value $A_N(K)$ is attained at some spherically convex polytope in $K$ with at most $N$ vertices.

The minimum of this quantity has been  studied extensively in the Euclidean setting. More specifically, Blaschke \cite[pp. 49--53]{Blaschke-1923} proved that for any convex body $K$ in $\mathbb{R}^2$, there exists a triangle $T\subset K$ such that
    \begin{equation}\label{Blaschke-ineq}
\area(T) \geq \frac{3\sqrt{3}}{4\pi} \area(K),
    \end{equation}
    with equality if and only if $K$ is an ellipse. Sas \cite{Sas-1939} later extended this result to all convex polygons, showing that there exists an $N$-gon $P_N\subset K$ such that
\begin{equation}\label{Sas-ineq}
    \area(P_N) \geq \frac{N}{2\pi}\sin\frac{2\pi}{N} \area(K)
\end{equation}
with equality if and only if $K$ is an ellipse. Apart from the equality case, this result was extended by Macbeath~\cite{Macbeath} to polytopes with $N$ vertices contained in a convex body $K$ in $\R^n$. 

In our next result, we prove an analogue of the result of Sas \cite{Sas-1939} for $c$-symmetric convex disks on $\Sph^2$.

\begin{theorem}\label{spherical-macbeath-thm}
Let $K \in \K^{\rm sym}_S$ and $N \geq 3$, and let $B_c(r)$ denote the spherical cap of radius $r$ centered at $c$ that satisfies $\area(K)= \area(B_c(r))$. Then we have
\begin{equation}\label{eq:Sas}
A_N(K) \geq A_N(B_c(r)) = C(r,N),
\end{equation}
where
\begin{equation}\label{eq:regpolarea}
C(r,N)=2N\arctan\left(  \dfrac{\cos\frac{\pi}{N}}{\sin \frac{\pi}{N}\cos r} \right)-(N-2)\pi.    
\end{equation}
\end{theorem}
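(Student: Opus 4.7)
The plan is to apply Theorem~\ref{general-ineq} to the functional $F = A_N$ restricted to $\K^{\rm sym}_S$, which will yield $A_N(K) \ge A_N(D_c(K)) = A_N(B_c(r))$ once we verify two properties of $A_N$: (i) it is nonincreasing under every symmetrization $\sigma_{L,H} \in \Sigma_c$ on $\K^{\rm sym}_S$, and (ii) it is lower semicontinuous in the Hausdorff metric. The remaining task is then to evaluate $A_N(B_c(r))$ explicitly and match it with the stated constant $C(r,N)$.

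Property (ii) follows from a standard Blaschke-selection argument. If $K_m \to K$ in the Hausdorff metric, then a maximum-area polygon $P^\ast \subset K$ with at most $N$ vertices (which is attained by compactness) has all vertices on $\bd(K)$, each of which is approximated for large $m$ by a point of $K_m$ since the boundaries converge in the Hausdorff metric; the corresponding inscribed polygon in $K_m$ has area tending to $\area(P^\ast) = A_N(K)$, so $\liminf_{m\to\infty} A_N(K_m) \ge A_N(K)$.

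The main obstacle is property (i), i.e., the monotonicity lemma $A_N(\sigma_{L,H}(K)) \le A_N(K)$. Given a maximum-area inscribed $N$-gon $P^\ast \subset \sigma_{L,H}(K)$, the plan is to produce an inscribed $N$-gon in $K$ of area at least $\area(P^\ast)$ by an inverse-symmetrization construction. Since for every distance curve $C_\theta$ the arcs $K \cap C_\theta$ and $\sigma_{L,H}(K) \cap C_\theta$ have equal angular length and differ only in the position of their centers, each vertex $v_i$ of $P^\ast$ has a natural counterpart $\tilde v_i$ on the same distance curve $C_{\theta_i}$ at the corresponding endpoint of the $K$-chord. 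Proving $\area(\conv_s\{\tilde v_1,\dots,\tilde v_N\}) \ge \area(P^\ast)$ appears to require a strip-by-strip analysis between consecutive values of $\theta$ in the vertex set, comparing the areas trapped between consecutive distance-curve chords of $P^\ast$ and $\tilde P$ via Lexell's theorem in the spirit of the proof of Theorem~\ref{thm:planarconvexity}, and using the central symmetry of $K$ to pair strips on opposite sides of $L$ so that the Lexell-circle inequalities accumulate to the desired bound. This is the most delicate part of the argument.

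Once (i) and (ii) are established, Theorem~\ref{general-ineq} delivers $A_N(K) \ge A_N(B_c(r))$, and it remains to compute $A_N(B_c(r))$. Pushing any vertex of an inscribed polygon radially outward from $c$ strictly increases the polygon's area, so the maximum is attained with all vertices on $\bd(B_c(r))$; then a Jensen-type concavity argument applied to the $N$ isoceles triangles formed by joining $c$ to consecutive vertices forces the maximizer to be the regular spherical $N$-gon. Each such triangle has two legs of length $r$ and apex angle $2\pi/N$; the spherical law of cosines for angles gives the base angle $\alpha$ as the solution of $\cot\alpha = \tan(\pi/N)\cos r$, that is, $\alpha = \arctan\bigl(\cos(\pi/N)/(\sin(\pi/N)\cos r)\bigr)$. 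Summing via Girard's formula yields the total area $N(2\alpha + 2\pi/N - \pi) = 2N\alpha - (N-2)\pi = C(r,N)$.
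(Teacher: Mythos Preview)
Your overall strategy matches the paper's: invoke Theorem~\ref{general-ineq} after checking lower semicontinuity of $A_N$ and its monotonicity under $\sigma_{L,H}\in\Sigma_c$, then compute $A_N(B_c(r))$ explicitly. The semicontinuity step and the computation of $C(r,N)$ are fine (the paper decomposes the regular $N$-gon into $2N$ right triangles rather than $N$ isoceles triangles, but the outcome is the same).

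The gap is in the monotonicity step. Your plan takes a maximum-area $N$-gon $P^\ast\subset K'=\sigma_{L,H}(K)$, sends each vertex $v_i\in\bd(K')\cap C_{\theta_i}$ to the corresponding endpoint $\tilde v_i$ of $K\cap C_{\theta_i}$, and hopes to show $\area(\conv_s\{\tilde v_i\})\ge\area(P^\ast)$ by a strip-by-strip Lexell argument. This single-polygon comparison need not hold. Between two consecutive distance curves, the strip of $P^\ast$ is generally \emph{not} symmetric about $H$ (its two boundary segments may both lie on the same side), and once the top and bottom chords are shifted by different amounts in the un-symmetrization, the area of the corresponding strip of $\tilde P$ can go either way. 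Your appeal to the central symmetry of $K$ about $c$ to ``pair strips on opposite sides of $L$'' does not repair this: that reflection sends $C_{\theta}$ to $C_{-\theta}$, so it does not pair the two sides of a single strip.

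The paper closes this gap with a different symmetry. Since $K'$ is symmetric about $H$, the reflection $Q'$ of $P^\ast$ about $H$ is a second maximum-area $N$-gon, and its vertices $q_i'$ sit at the \emph{opposite} endpoints of the same arcs $K'\cap C_{\theta_i}$. Transporting both endpoint pairs to $K$ yields two $N$-gons $P,Q\subset K$, and the paper proves the \emph{sum} inequality $\area(P)+\area(Q)\ge\area(P')+\area(Q')$; this forces $\max\{\area(P),\area(Q)\}\ge A_N(K')$. The strip-by-strip Lexell/area comparison now works because each strip contribution $A_i$ involves a symmetric configuration (both endpoints of each chord), and the argument reduces---after the same angular-length reduction as in Theorem~\ref{thm:planarconvexity}---to a single Lexell-circle comparison. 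The case analysis also relies on Lemma~\ref{lem:maxarea} (that $c\in P^\ast$), which you do not mention; it is needed to rule out a bad crossing configuration. So the missing idea is precisely this $H$-reflection averaging trick, not the central symmetry of $K$.
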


We start the proof with a lemma.

\begin{lemma}\label{lem:maxarea}
For any $K \in \K^{\rm sym}_S$ and any convex $N$-gon $P$ of maximal area in $K$, where $N \geq 3$, we have $c \in P$.
\end{lemma}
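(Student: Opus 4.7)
I argue by contradiction, assuming $c\notin P$. The central symmetry of $K$ implies that the point-reflection $\tau_c(P)$ of $P$ through $c$ is another convex $N$-gon contained in $K$ with the same area $A_N(K)$, hence also a maximiser. The intersection $P\cap\tau_c(P)$ is a spherically convex $\tau_c$-invariant subset of $S$, and any non-empty such set must contain $c$: for any point $x$ in it, $\tau_c(x)$ also belongs, so does the segment $[x,\tau_c(x)]_s$, whose spherical midpoint is $c$. Since $c\notin P$, this forces $P\cap\tau_c(P)=\emptyset$. By the separation theorem for disjoint spherically convex sets in an open hemisphere, $P$ and $\tau_c(P)$ can be strictly separated by a great circle; using the $\tau_c$-symmetry of the pair, this separating great circle $\ell$ may be chosen to pass through $c$, so that $P$ and $\tau_c(P)$ lie in opposite open hemispheres $H^+, H^-$ bounded by $\ell$.

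The next step is to produce a spherically convex polygon $\widetilde P\subset K$ with at most $N$ vertices and $\area(\widetilde P)>A_N(K)$, contradicting the definition of $A_N(K)$. My principal tool is the following visibility principle: for any external point $x\in K\setminus P$, the spherical convex hull $\conv_s(P\cup\{x\})$ lies in $K$ and strictly contains $P$, so has strictly larger area; moreover, if $x$ lies in the \emph{vertex cone} at some vertex $v_k$ of $P$ (meaning that $x$ is on the exterior side of the spherical lines spanned by both edges of $P$ through $v_k$), then $v_k$ is hidden inside $\conv_s(P\cup\{x\})$, so this convex hull has at most $N$ vertices. It therefore suffices to exhibit such a point $x\in K$.

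To produce such an $x$ I would use the maximum-area condition on $P$: each vertex $v_k$ lies on $\bd(K)$, and by a standard variational argument based on Lexell's theorem (as recalled in the proof of Theorem~\ref{thm:planarconvexity}), the Lexell circle of the spherical triangle $\conv_s\{v_{k-1},v_k,v_{k+1}\}$ through $v_k$ is tangent to $\bd(K)$ at $v_k$. Combined with the $c$-symmetry of $K$ and the fact that $\tau_c(P)\subset K$ lies in $H^-$ across $\ell$ from $P$, this tangency condition forces $\bd(K)$ near the vertex $v_k$ of $P$ closest to $\ell$ to bend into the vertex cone at $v_k$; otherwise $\bd(K)$ would have to coincide with a spherical edge of $P$ near $v_k$, and using the $\tau_c$-image $\tau_c(v_k)\in\bd(K)$ on the other side of $\ell$, this leads to a contradiction with the convexity of $K$. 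This yields the required point $x\in K\setminus P$ in the vertex cone, completing the contradiction.

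The main obstacle is the last step, namely rigorously producing $x\in K$ in a vertex cone of $P$, especially in the degenerate cases when $\bd(K)$ contains a straight spherical edge or several vertices of $P$ are at nearly equal distances from $\ell$. I would handle these by a continuity/perturbation argument: approximate $K$ by a sequence of $c$-symmetric convex disks $K_m\to K$ (in the Hausdorff metric) with strictly convex $C^2$-smooth boundaries, apply the non-degenerate argument to each $K_m$, and pass to the limit using the continuity of $A_N(\cdot)$ under Hausdorff convergence.
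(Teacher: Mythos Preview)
Your opening moves are correct and elegant: if $c\notin P$ then $\tau_c(P)$ is another maximiser, the intersection $P\cap\tau_c(P)$ is a $\tau_c$-invariant convex set and hence would contain $c$ if non-empty, so $P$ and $\tau_c(P)$ are disjoint and can be strictly separated by a great circle $\ell$ through $c$.

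The genuine gap is at the production of the point $x\in K$ lying in a vertex cone of $P$. The Lexell tangency fixes only the \emph{tangent direction} of $\bd(K)$ at $v_k$. Since $v_{k-1},v_{k+1}\in K$, both edges of $P$ at $v_k$ point into the $K$-side of that tangent line; consequently the vertex cone at $v_k$ (the vertically opposite angle to the interior angle of $P$) opens into the \emph{non}-$K$ side of the tangent. Thus, locally near $v_k$, $\bd(K)$ does not enter the vertex cone at all---it curves away into $K$. Your dichotomy ``either $\bd(K)$ bends into the vertex cone, or it coincides with an edge of $P$'' is therefore false: the generic behaviour is neither. Nothing you have collected (the tangency, the choice of $v_k$ closest to $\ell$, the position of $\tau_c(v_k)$) furnishes a \emph{global} reason for $K$ to reach into some vertex cone of $P$; indeed $c$ itself may well sit in an edge region rather than a vertex cone. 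The perturbation step does not rescue this either: even if the smooth case worked, a Hausdorff limit of maximisers in $K_m$ is merely \emph{some} maximiser in $K$, so you would get $c\in P$ for one maximiser, not for every one.

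The paper avoids vertex cones altogether. After choosing the separating line, it selects an axis line through $c$ (using the $c$-symmetry of $K$) so that the extremal distance curves touch $\bd(K)$ precisely at the endpoints of the separating chord. One then checks that on every distance curve $C$ meeting $P$, the arc $C\cap K$ extends a uniform angular amount $\varepsilon>0$ beyond $C\cap P$ on the side towards the separating line. Rotating $P$ about the poles by $\varepsilon$ yields a congruent $N$-gon $P'\subset K$ with at least one vertex in $\inter(K)$; hence $P'$, and therefore $P$, cannot be area-maximal. The mechanism is a rigid slide of $P$ along distance curves, not a convex-hull enlargement.
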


\begin{proof}
Consider an $N$-gon $P$ contained in $K$ that does not contain $c$. Then there is a great circle $L$ through $c$ disjoint from $P$. Let the endpoints of $L \cap K$ be denoted by $p$ and $q$. 
Since $K \in \K^{\rm sym}_S$, there is a great circle $L$ through $c$, and two distance curves $C_p$ and $C_q$ with axis $L$ and that pass through $p$ and $q$, respectively, which touch $\bd(K)$. Here, without loss of generality, we may assume that $L$ is the line $L$ in the coordinate system in the last paragraph of Subsection~\ref{subsec:prelim}. We denote by $K_0$ the intersection of $K$ with the closed hemisphere of $\Sph^2$ bounded by $L$ and containing $P$. Observe that the interior of this hemisphere contains $P$.

Let $C$ be any distance curve with axis $L$ that intersects $K$. Then $C$ intersects $[p,q]_s$. Since $P$ is compact, there is some $\varepsilon > 0$ such that the angular distance of $C \cap P$ from $L$, measured on $C$, is at least $\varepsilon$, implying that the angular length of $C \cap K_0$ is greater by $\varepsilon$ than the angular length of $C \cap P$.
Let $P'$ be the rotated copy of $P$ about the poles of $\Sph^2$ by $\varepsilon$ towards $L$. Then $P' \subseteq K_0 \subset K$, and at least one vertex of $P'$ is contained in the interior of $K$. This yields that the area of $P$ is not maximal.
\end{proof}

\begin{proof}[Proof of Theorem~\ref{spherical-macbeath-thm}]
First, note that by the spherical variant of Dowker's theorem (c.f., e.g., \cite{FejesTothFodor}), any $N$-gon of maximal area inscribed in $B_c(r)$ is a regular $N$-gon. First, we show that the area of such a regular $N$-gon is equal to $C(r,N)$.

Consider a spherical right triangle whose hypotenuse is $r$, and one of its angles is $\frac{\pi}{N}$. Let the third angle of the triangle be $\gamma$. Applying the spherical cosine law for this triangle for three angles and a side, we obtain that $\gamma = \arctan \frac{\cos \frac{\pi}{N}}{ \sin \frac{\pi}{N} \cos r}$. Thus, by Girard's theorem, the area of this triangle is $\frac{\pi}{N}+ \arctan \frac{\cos \frac{\pi}{N}}{ \sin \frac{\pi}{N} \cos r} - \frac{\pi}{2}$. Then our statement follows by multiplying this quantity by $2N$. This proves the equality in (\ref{eq:Sas}).

 To prove the inequality in (\ref{eq:Sas}), we choose an arbitrary symmetrization $\sigma_{L,H} \in \Sigma_c$, and observe that by Theorem~\ref{general-ineq}, it suffices to show that for any $K \in \K^{\rm sym}_S$ and $m \geq 3$, we have $A_N(K) \geq A_N(\sigma_{L,H}(K))$.

\begin{figure}[ht]
\begin{center}
\includegraphics[width=0.45\textwidth]{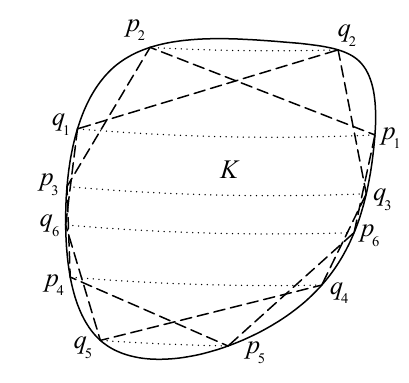}
\caption{An illustration for the proof of Theorem~\ref{spherical-macbeath-thm} with $N=6$. The dotted arcs denote the chords of $K$ on the distance curves $C_i$. The dashed polygonal curves denote the boundaries of the spherical inscribed polygons $P$ and $Q$.}
\label{fig:inscribed}
\end{center}
\end{figure}

Set $K'= \sigma_{L,H}(K)$, and let $P'$ be an $N$-gon contained in $K'$ of area $A_N(K')$. Then $P'$ is inscribed in $K'$, and since $K'$ is axially symmetric to $H$, the reflected copy $Q'$ of $P$ about $H$ is also a maximal area $N$-gon in $K'$. Let $p_1', p_2', \ldots, p_N'$ be the vertices of $P'$ in counterclockwise order, where we set $p_{N+1}'=p_1'$. We follow the idea of Macbeath in \cite{Macbeath} where for every $p_i'$ we define $C_i$ as the distance curve with axis $L$, passing through $p_i'$. Then $C_i \cap K'$ is a circular arc with endpoints $p_i'$ and $q_i'$, where $q_i'$ is the reflection of $p_i'$ to $H$, and thus, it is a vertex of $Q'$.
The curve $C_i$ intersects $K$ in a rotated copy of the circular arc $C_i \cap K'$. We denote the endpoints of this arc by $p_i$ and $q_i$, where the notation is chosen such that the rotation that moves $p_i'$ into $p_i$ moves $q_i'$ into $q_i$ (see Figure~\ref{fig:inscribed}). Furthermore, we define $P$ and $Q$ as the convex hulls of the $p_i$ and the $q_i$, respectively.

In the remaining part, we show that 
\[
\area(P) + \area(Q) \geq \area(P')+\area(Q').
\]
We establish this inequality under the assumption that the two distance curves with axis $L$ that touch $P'$ and $Q'$,  touch them at vertices. If one or both of them touch $P'$ and $Q'$ at an interior point of a side, a similar consideration can be applied.

For $i=1,2,\ldots,N$, we define the quantities $A_i'$ as follows. Consider the case that $[p_i',q_{i+1}']_s$ and $[q_i',p_{i+1}']_s$ do not cross. Then $A_i'$ denotes the area of the region in $S$ bounded by $[p_i',q_{i+1}']_s$, $[q_i',p_{i+1}']_s$, and the circular arcs $C_i \cap K'$ and $C_{i+1} \cap K'$. Assume that $[p_i',q_{i+1}']_s$ and $[q_i',p_{i+1}']_s$ cross at a point $x$. For $j=i,i+1$, let $R_j$ denote the region bounded by $[p_j',x]_s$, $[q_j',x]_s$ and $C_j \cap K'$. Then one of $R_i$ and $R_{i+1}$ is covered by both $P'$ and $Q'$, and the other one overlaps neither $P'$ nor $Q'$. Let the first region be denoted by $R_+$, and the second one by $R_-$. Then we set $A_i'=\area(R_+)-\area(R_-)$.
Finally, we observe that $[p_i',q_{i+1}']_s$ and $[q_i',p_{i+1}']_s$ cross if and only if $[p_i,q_{i+1}]_s$ and $[q_i,p_{i+1}]_s$ cross, and define the quantities $A_i$ analogously to the $A_i'$.

Now, we have
\[
\area(P')+\area(Q')=\sum_{i=1}^m A_i', \quad \area(P)+\area(Q)=\sum_{i=1}^N A_i.
\]
Thus, it is sufficient to prove that for any value of $i$, $A_i' \geq A_i$, which we are going to do in the remaining part of the proof.

Consider some $i$ such that $[p_i',q_{i+1}']_s$ and $[q_i',p_{i+1}']_s$ cross neither each other, nor $L$. Then applying the idea of the proof of Theorem~\ref{thm:planarconvexity}, we obtain that $A_i \geq A_i'$.

Assume that $[p_i',q_{i+1}']_s$ and $[q_i',p_{i+1}']_s$ cross $L$ but not each other. Without loss of generality, we may assume that the distance of $C_i$ from $L$ is not greater than that of $C_{i+1}$. Thus, by the symmetry and angular monotonicity property of $K$, the angular length of $C_i \cap K'$ is not less than that of $C_{i+1} \cap K'$. We follow the idea of the proof of Theorem~\ref{thm:planarconvexity}, and, without loss of generality, assume that $C_{i+1} \cap K'$ is a single point, i.e., $p_{i+1}'=q_{i+1}'$, which yields also that $p_{i+1}=q_{i+1}$. But then the Lexell circle of $[p_i',q_i']_s$ through this point separates $C_{i+1}$ from $[p_i',q_i']_s$, implying that $\area(\conv_s \{ p_i,q_i,p_{i+1} \}) \geq \area(\conv_s \{ p_i',q_i',p_{i+1}' \})$ and thus, $A_i \geq A_i'$.

Consider the case that $[p_i',q_{i+1}']_s$ and $[q_i',p_{i+1}']_s$ cross each other, but not $L$. Without loss of generality, we assume that $C_i \cap K'$ lies in the boundary of the region $R_+$ of positive weight. By Lemma~\ref{lem:maxarea} and the symmetry of $K'$, the distance of $C_i$ from $L$ is not greater than that of $C_{i+1}$, as otherwise $c \notin P$. 

Assume that $C_i \cap \sigma_{L,H}(K)$ belongs to the part with positive weight. Then by Lemma~\ref{lem:maxarea} and the symmetry of $\sigma_{L,H}(K)$,  $C_i$ is not closer to $G$ than $C_i$, as otherwise $c \notin P$. Thus, as in the previous cases, applying a suitable rotation to $[p_i',p_{i+1}']_s$ and $[q_i',q_{i+1}']_s$, we may assume that $q_{i+1}'=p_{i+1}'$, implying that $q_{i+1}=p_{i+1}$. As before, we have $\area (\conv_s \{ p_i,q_i,p_{i+1} \}) \geq \area (\conv_s \{ p_i',q_i',p_{i+1}' \})$, which yields that $A_i \geq A_i'$.

Finally, if $[p_i',q_{i+1}']_s$ and $[q_i',p_{i+1}']_s$ cross each other and also $L$, to obtain the inequality $A_i \geq A_i'$ we can combine the arguments in the two previous cases.
\end{proof}

\begin{problem}
    Remove the symmetry assumption on the convex disk $K$ in Theorem~\ref{spherical-macbeath-thm}.
\end{problem}

\subsection{An isoperimetric-type inequality for floating areas of spherically convex disks}\label{subsec:floating}

Let $\mathcal{K}^n$ denote the set of all convex bodies in $\R^n$. For $K\in\mathcal{K}^n$, the generalized Gaussian curvature at $x\in\bd(K)$ is denoted by $H_{n-1}(K,x)$. Note that $H_{n-1}(K,x)$ exists for $\mathcal{H}^{n-1}$-almost all $x\in\bd(K)$ \cite[Lemma 2.3]{Hug-1996}. A quantity which plays a key role in affine differential geometry is the \emph{affine surface area} of $K$, which is defined as
\[
\as(K):=\int_{\bd(K)}H_{n-1}(K,x)^{\frac{1}{n+1}}\,d\mathcal{H}^{n-1}(x).
\]
Affine surface area is invariant under volume-preserving affine transformations, and it is an upper semicontinuous valuation on the set of convex bodies in $\mathbb{R}^n$ which vanishes on polytopes. For more background on affine surface area  and its many applications to convexity, we refer the reader to, e.g., \cite[Section 10.5]{SchneiderBook}.

Interestingly, the quantity $\as(K)$ is closely related to how well $K$ can be approximated by polytopes, as follows.
For any $K\in\mathcal{K}^n$, let $\mathcal{P}_N(K)$ denote the set of all polytopes contained in $K$ with at most $N$ vertices, and set
\begin{equation}\label{best-polytope}
\dist(K,\mathcal{P}_N(K)):=\inf\left\{\vol_n(K)-\vol_n(P):\, P\in\mathcal{P}_N(K)\right\}.
\end{equation}
By a compactness argument, there exists a \emph{best-approximating polytope} $\widehat{P}\in\mathcal{P}_N(K)$ which attains the infimum in \eqref{best-polytope}.

If $K\in\mathcal{K}^n$ has $C^2$ boundary and positive Gaussian curvature, then there exists a constant $\del_{n-1}$ (whose value depends only on the dimension $n$) such that
\begin{equation}\label{main-asymptotic}
\lim_{N\to\infty}\dist(K,\mathcal{P}_N(K))N^{\frac{2}{n-1}} = \frac{1}{2}{\rm del}_{n-1}\left[{\rm as}(K)\right]^{\frac{n+1}{n-1}}.
\end{equation}
The quantity $\del_{n-1}$ is called the \emph{Delone triangulation number} in $\R^n$, and it may be defined via \eqref{main-asymptotic}. These numbers are connected with Delone triangulations in $\R^{n-1}$; for more background, we refer the reader to, e.g.,  \cite{MaS1,MaS2} and the references therein. Only the first few values of $\del_{n-1}$ are known; in particular, the exact value ${\rm del}_1=1/6$ was stated by L. Fejes T\'oth \cite{lagerungen} and proved by McClure and Vitale \cite{McClure-Vitale-1975}, and Gruber \cite{Gruber:1988} proved that ${\rm del}_2=1/(2\sqrt{3})$. The best known asymptotic estimate for $\del_{n-1}$ is due to Mankiewicz and Sch\"utt \cite{MaS1,MaS2}, who proved that
\[
{\rm del}_{n-1}=\frac{n}{2\pi e}\left(1+O\left(\frac{\ln n}{n}\right)\right).
\]

The spherical analogue of affine surface area for a spherically convex body $K$ is the so-called \emph{floating area} of $K$, defined by Besau and Werner \cite{Besau-Werner} as
\begin{equation}\label{eq:floatingarea}
\Omega_s(K) = \int_{\partial K}H_{n-1}^{\mathbb{S}^n}(K,x)^{\frac{1}{n+1}}\,d\mathcal{H}^{n-1}(x).
\end{equation}
The function $\Omega_s(\cdot)$ is an upper semicontinuous valuation on the set $\mathcal{K}(\mathbb{S}^n)$ of spherically convex bodies, and it vanishes on spherical polytopes \cite{Besau-Werner}.

The following conjecture is due to Besau and Werner \cite[Conjecture 7.5]{Besau-Werner}.

\begin{conjecture}[Isoperimetric Inequality for Floating Area]\label{conj:BW}
    Let $K\in\mathcal{K}(\mathbb{S}^n)$. Then $\Omega_s(K)\leq \Omega_s(B(K))$, where $B(K)$ is a spherical ball such that $\vol_s(K)=\vol_s(B(K))$. Equality holds if and only if $K$ is a spherical cap.
\end{conjecture}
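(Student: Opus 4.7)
The plan is to prove the conjecture in the restricted setting actually targeted by this paper: namely, $n=2$ and $K \in \K^{\rm sym}_S$. I would apply the general framework of Theorem~\ref{general-ineq} to $F = \Omega_s$. Since $\Omega_s$ is upper semicontinuous on $\K(\Sph^2)$ (this is established in~\cite{Besau-Werner}), it is enough to verify that $\Omega_s$ is nondecreasing under every symmetrization $\sigma_{L,H} \in \Sigma_c$ on the class $\K^{\rm sym}_S$; Theorem~\ref{general-ineq} then yields $\Omega_s(K) \leq \Omega_s(D_c(K)) = \Omega_s(B(K))$, which is the desired inequality.

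To establish this monotonicity, I would invoke the spherical analogue of the Mankiewicz--Sch\"utt asymptotic formula proved by Besau and Werner, which in dimension two reads
\[
\lim_{N \to \infty} N^{2} \bigl( \area(K) - A_N(K) \bigr) = \frac{1}{12}\, \Omega_s(K)^{3}.
\]
An inspection of the proof of Theorem~\ref{spherical-macbeath-thm} shows that its backbone is the pointwise monotonicity $A_N(K) \geq A_N(\sigma_{L,H}(K))$ for every $N \geq 3$, every $K \in \K^{\rm sym}_S$, and every $\sigma_{L,H} \in \Sigma_c$. Since spherical area is preserved under Steiner symmetrization (Theorem~\ref{thm:volume}), rearranging gives
\[
\area(K) - A_N(K) \leq \area(\sigma_{L,H}(K)) - A_N(\sigma_{L,H}(K)).
\]
Multiplying by $N^{2}$ and passing to the limit $N \to \infty$ in the asymptotic formula produces $\Omega_s(K)^3 \leq \Omega_s(\sigma_{L,H}(K))^3$, hence $\Omega_s(K) \leq \Omega_s(\sigma_{L,H}(K))$, which closes the monotonicity step and, via Theorem~\ref{general-ineq}, the inequality.

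For the equality characterization, the plan is to exploit Theorem~\ref{thm:convergence}: if $\Omega_s(K) = \Omega_s(D_c(K))$, then along a sequence $\{K_m\}$ produced by iterated Steiner symmetrizations with $K_m \to D_c(K)$, the monotonicity above together with the upper semicontinuity of $\Omega_s$ forces $\Omega_s(K_m)$ to be constant. This in turn forces equality $A_N(K_m) = A_N(\sigma_{L,H}(K_m))$ for every $N$ asymptotically, which one would then have to propagate back to $K$ via a rigidity analysis of the case-by-case argument in Theorem~\ref{spherical-macbeath-thm}. The principal obstacle lies precisely here: Theorem~\ref{spherical-macbeath-thm} is stated as a non-strict inequality, so the equality clause of the conjecture demands a sharpening that identifies exactly when the Lexell-circle comparisons used in its proof are tight, and shows that tightness for all $N$ simultaneously forces $K$ to be a spherical cap. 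An alternative route, if the rigidity becomes cumbersome, would be a direct variational argument: perturb $K$ away from $D_c(K)$ and show that $\Omega_s$ strictly decreases, using the integral representation~\eqref{eq:floatingarea} and the explicit expression $C(r,N)$ from Theorem~\ref{spherical-macbeath-thm} to track the curvature.
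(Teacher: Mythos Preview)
Your overall strategy---link the Sas-type inequality for $A_N$ to $\Omega_s$ through the asymptotic best-approximation formula---is precisely the paper's idea, but your routing through Theorem~\ref{general-ineq} introduces a genuine gap. To apply that theorem you need the single-step monotonicity $\Omega_s(K)\le\Omega_s(\sigma_{L,H}(K))$, and you propose obtaining it by applying the asymptotic formula to both $K$ and $\sigma_{L,H}(K)$. But that formula (which the paper has to establish as Theorem~\ref{BLW-thm} for the \emph{inscribed} case; it is not literally in Besau--Werner) requires $C^2_+$ boundary, and spherical Steiner symmetrization does not preserve $C^2_+$ regularity. Hence the limit for $\sigma_{L,H}(K)$ is not known to exist, and the passage from $A_N(K)\ge A_N(\sigma_{L,H}(K))$ to $\Omega_s(K)\le\Omega_s(\sigma_{L,H}(K))$ is unjustified.

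The paper avoids this entirely: rather than compare $K$ with a single symmetrization, it invokes Theorem~\ref{spherical-macbeath-thm} in its \emph{final} form $A_N(K)\ge A_N(D_c(K))$, comparing $K$ directly with the cap. Both $K$ (by hypothesis) and the cap $D_c(K)$ are $C^2_+$, so \eqref{special-case} applies at each endpoint, and taking the limit gives $\Omega_s(K)\le\Omega_s(D_c(K))$ immediately---no iteration, and Theorem~\ref{general-ineq} is never used here. Your detour through monotonicity of $\Omega_s$ under a single $\sigma_{L,H}$ is both unnecessary and, as it stands, broken.

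On the equality case: the paper explicitly gives this up (``our proof method does not characterize the equality case''). Your rigidity sketch has a structural problem already at the first step: equality of the \emph{limits} $\lim_N N^2(\area-A_N)$ does not force equality, or even near-equality, of the individual $A_N$, so you cannot pass from $\Omega_s(K_m)$ constant back to $A_N(K_m)=A_N(\sigma_{L,H}(K_m))$ and thence to the Lexell-circle analysis.
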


Very recently, this conjecture was verified by Besau and Werner \cite[Theorem 4.1]{Besau-Werner-Lp-2024} for $K\in\mathcal{K}(\mathbb{S}^n)$, $n\geq 3$, under  the  restrictions that the origin is the ``GHS-center" of $K$ and the gnomonic projection of $K$ is contained in the ball of radius $\sqrt{n(n-2)}$. We refer the reader to \cite{Besau-Werner-Lp-2024} for the precise statements. In this subsection, we study the missing planar case and prove that the inequality in Conjecture~\ref{conj:BW} holds for smooth and $c$-symmetric convex disks in $\mathbb{S}^2$. In what follows, given $p\in\mathbb{S}^n$, let $\mathcal{K}_p(\mathbb{S}^n):=\{K\in\mathcal{K}(\mathbb{S}^n):\, p\in K,\, K\subset S\}$.

\begin{theorem}\label{isop-ineq-floating-area}
    Let $K\in\mathcal{K}_p(\mathbb{S}^2)$ be $c$-symmetric  and have $C^2_+$ boundary. Then $\Omega_s(K)\leq \Omega_s(D_c(K))$, where $D_c(K)$ is the spherical disk centered at $c$ such that $\area(D_c(K))=\area(K)$.
\end{theorem}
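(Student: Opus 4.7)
The strategy is to apply Theorem~\ref{general-ineq} to the floating area functional $F=\Omega_s$ restricted to $\K^{\rm sym}_S$. Upper semicontinuity of $\Omega_s$ on $\K(\mathbb{S}^2)$ is established in \cite{Besau-Werner}, so the main task is to verify the monotonicity
\[
\Omega_s(K)\leq \Omega_s(\sigma_{L,H}(K))
\]
for every $K\in\K^{\rm sym}_S$ and every $\sigma_{L,H}\in\Sigma_c$. Once this is in hand, Theorem~\ref{general-ineq} immediately yields $\Omega_s(K)\leq\Omega_s(D_c(K))$.

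I would prove the monotonicity first under the $C^2_+$ hypothesis by a direct computation in the $(\theta,\varphi)$-coordinates of Subsection~\ref{subsec:prelim}. Parametrize $\bd K$ by two $C^2$ arcs $\Gamma_1,\Gamma_2:[\underline\theta,\overline\theta]\to S$ as in the proof of Theorem~\ref{thm:perimeter}, and use the Besau--Werner integral representation $\Omega_s(K)=\int_{\bd K}\kappa_s^{1/3}\,ds$, where $\kappa_s$ is the appropriate spherical geodesic curvature. Expanding the line element along the distance curves $C_\theta$ gives
\[
\Omega_s(K)=\int_{\underline\theta}^{\overline\theta}\bigl(\kappa_s(\Gamma_1(\theta))^{1/3}+\kappa_s(\Gamma_2(\theta))^{1/3}\bigr)\,w(\theta)\,d\theta
\]
for a weight $w(\theta)$ that depends only on the $\theta$-coordinate and is therefore unchanged by $\sigma_{L,H}$. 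The same identity for $\sigma_{L,H}(K)$ involves the boundary arcs $\Delta_1,\Delta_2$, whose $\varphi$-coordinates are $\pm(\varphi_{\Gamma_2}-\varphi_{\Gamma_1})/2$; a computation in exact parallel with the Euclidean argument (where Steiner symmetrization sends graphs $\varphi=f(\theta),\varphi=g(\theta)$ to $\varphi=\pm(f-g)/2$, and the ``second-derivative'' content adds as $|f''|+|g''|$) shows that the geodesic curvature appearing on each side of $\bd(\sigma_{L,H}(K))$ at height $\theta$ is the average $(\kappa_s(\Gamma_1(\theta))+\kappa_s(\Gamma_2(\theta)))/2$. The pointwise Jensen inequality $a^{1/3}+b^{1/3}\leq 2^{2/3}(a+b)^{1/3}$, applied under the integral sign, then delivers $\Omega_s(K)\leq\Omega_s(\sigma_{L,H}(K))$.

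Since the iterates produced by Theorem~\ref{thm:convergence} and used in Theorem~\ref{general-ineq} need not be of class $C^2_+$, I would extend the monotonicity to all of $\K^{\rm sym}_S$ by invoking the equivalent characterization from \cite{Besau-Werner},
\[
\Omega_s(L)=c\lim_{\delta\to 0^+}\delta^{-2/3}\bigl(\vol_s(L)-\vol_s(F_\delta^s(L))\bigr),
\]
where $F_\delta^s(L)$ denotes the spherical convex floating body. Combined with Theorem~\ref{thm:volume} and the inclusion $F_\delta^s(\sigma_{L,H}(K))\subseteq\sigma_{L,H}(F_\delta^s(K))$, this gives
\[
\vol_s(\sigma_{L,H}(K))-\vol_s(F_\delta^s(\sigma_{L,H}(K)))\geq \vol_s(K)-\vol_s(F_\delta^s(K)),
\]
and hence $\Omega_s(K)\leq\Omega_s(\sigma_{L,H}(K))$ in full generality. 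Note that $F_\delta^s(K)\ni c$ for $c$-symmetric $K$ and small $\delta$, so by Remark~\ref{rem:validity2} it satisfies the connectedness property and $\sigma_{L,H}(F_\delta^s(K))$ is well-defined.

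The principal obstacle is the floating-body inclusion $F_\delta^s(\sigma_{L,H}(K))\subseteq\sigma_{L,H}(F_\delta^s(K))$: while its Euclidean analogue is classical (and underlies Euclidean proofs that Steiner symmetrization increases affine surface area), the spherical version requires a careful argument replacing half-spaces by hemispheres and tracking how a cap cut off by a hemisphere of $\sigma_{L,H}(K)$ pulls back along distance curves to a cap of at most the same volume in $K$; here the $c$-symmetry and angular monotonicity (Lemma~\ref{lem:monotonicity}) play a decisive role. A secondary, purely technical issue is the bookkeeping for the direct curvature computation, which should be done carefully but poses no conceptual difficulty once the parametrization of Theorem~\ref{thm:perimeter} is adopted.
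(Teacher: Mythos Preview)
Your approach is genuinely different from the paper's, and it contains real gaps.

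\textbf{What the paper does.} The paper does \emph{not} attempt to show that $\Omega_s$ is monotone under $\sigma_{L,H}$. Instead it combines two earlier results: the spherical Sas theorem (Theorem~\ref{spherical-macbeath-thm}), which gives $A_N(K)\geq A_N(D_c(K))$ for every $N$ and every $c$-symmetric $K$, and the asymptotic formula (Theorem~\ref{BLW-thm}) $\lim_{N}N^{2}\dist_s(K,\mathcal P_N^{\mathbb S^2}(K))=\tfrac{1}{12}\Omega_s(K)^3$. Since $\dist_s(K,\mathcal P_N^{\mathbb S^2}(K))=\area(K)-A_N(K)$ and $\area(K)=\area(D_c(K))$, the Sas inequality passes to the limit and yields $\Omega_s(K)\leq\Omega_s(D_c(K))$ in three lines. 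The hard work was already done in proving Theorem~\ref{spherical-macbeath-thm}, where monotonicity of $A_N$ (not of $\Omega_s$) under $\sigma_{L,H}$ was established by the Lexell-circle argument.

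\textbf{Where your direct computation breaks.} The claim that after symmetrization the geodesic curvature at height $\theta$ equals the average $(\kappa_s(\Gamma_1(\theta))+\kappa_s(\Gamma_2(\theta)))/2$ is false, already in the Euclidean model. For a graph $y=f(x)$ one has $\kappa=f''/(1+f'^2)^{3/2}$, and the symmetrized curve $y=(f-g)/2$ has curvature $\tfrac12(f''-g'')\big/\bigl(1+\tfrac14(f'-g')^2\bigr)^{3/2}$, which is not a linear combination of $\kappa_f$ and $\kappa_g$ because of the nonlinear first-derivative dependence. The same obstruction occurs for spherical geodesic curvature. Relatedly, the assertion that the arclength weight $w(\theta)$ depends only on $\theta$ is incorrect: with the metric $ds^2=d\theta^2+\cos^2\theta\,d\varphi^2$, parametrizing by $\theta$ gives $ds=\sqrt{1+\cos^2\theta\,(\varphi')^2}\,d\theta$, which involves $\varphi'$ and therefore changes under symmetrization. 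So the Jensen step, even if the inequality $a^{1/3}+b^{1/3}\leq 2^{2/3}(a+b)^{1/3}$ is the right concavity, is being applied to the wrong integrand.

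\textbf{The floating-body route.} You correctly identify the inclusion $F_\delta^s(\sigma_{L,H}(K))\subseteq\sigma_{L,H}(F_\delta^s(K))$ as the crux, and you do not prove it. In $\R^n$ the analogous statement is a nontrivial theorem (Meyer--Reisner), and on $\mathbb S^2$ the half-space caps become hemisphere caps whose interaction with distance curves is delicate; nothing in the paper's toolkit (Lemma~\ref{lem:monotonicity}, the Lexell-circle argument) immediately yields it. Until this inclusion is established, the proposal is a plan rather than a proof.

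In short: your outline proposes to bypass the Sas theorem by proving monotonicity of $\Omega_s$ directly, but the curvature-averaging step is simply wrong, and the floating-body inclusion is left open. The paper's route via $A_N$ avoids both issues.
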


Unfortunately, our proof method does not characterize the equality case in Theorem~\ref{isop-ineq-floating-area}.


\vspace{2mm}

In analogy with the Euclidean case, the floating area $\Omega_s(K)$ is closely related to how well a spherically convex body $K\subset\mathbb{S}^n$ can be approximated by spherical polytopes. The connection between the floating area and the asymptotic best and random approximation of convex bodies in spherical and hyperbolic spaces was  studied by Besau, Ludwig and Werner in \cite{BLW}. The next result is an asymptotic formula for the best approximation of spherically convex bodies in $\mathbb{S}^n$ by inscribed spherical polytopes with at most $N$  vertices. To state the result, for $K\in\mathcal{K}_p(\mathbb{S}^n)$, let $\mathcal{P}_N^{\mathbb{S}^n}(K)$ denote the set of all spherical polytopes inscribed in $K$ with at most $N$ vertices, and set 
\[
\dist_s(K,\mathcal{P}_N^{\mathbb{S}^n}(K)):=\inf\{\vol_s(K)-\vol_s(P):\, P\in\mathcal{P}_N^{\mathbb{S}^n}(K)\}.
\]
Also note that by a compactness argument, there exists a best-approximating polytope $\widehat{P}\in\mathcal{P}_N^{\mathbb{S}^n}(K)$ such that  $\dist_s(K,P_N^{\mathbb{S}^n}(K))=\vol_s(K)-\vol_s(\widehat{P})$.

\begin{theorem}\label{BLW-thm}
    For every $K\in\mathcal{K}(\mathbb{S}^n)$ with $C^2_+$ boundary, we have 
    \[
\lim_{N\to\infty}\dist_s(K,\mathcal{P}^{\mathbb{S}^n}_N(K))N^{\frac{2}{n-1}}=\frac{1}{2}{\rm del}_{n-1}\left[\Omega_s(K)\right]^{\frac{n+1}{n-1}}.
    \]
\end{theorem}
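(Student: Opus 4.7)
The plan is to reduce Theorem~\ref{BLW-thm} to the Euclidean asymptotic \eqref{main-asymptotic} via the gnomonic projection $\pi_c:S\to T_c\Sp\simeq\R^n$. Since $K$ is spherically convex with $C^2_+$ boundary, $\tilde K:=\pi_c(K)$ is a Euclidean $C^2_+$ convex body, and since $\pi_c$ sends great-circle arcs to straight segments, it induces a bijection $\mathcal{P}_N^{\Sp}(K)\longleftrightarrow\mathcal P_N(\tilde K)$ preserving vertex counts. The Jacobian of $\pi_c^{-1}$ on the tangent plane is $w(y):=(1+\|y\|^2)^{-(n+1)/2}$, so for any inscribed $P$ with image $\tilde P$,
\[
\vol_s(K)-\vol_s(P)\;=\;\int_{\tilde K\setminus\tilde P}w(y)\,dy,
\]
which turns the problem into a \emph{weighted} Euclidean best-approximation problem with weight $w$.

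The first main step is to prove a weighted analogue of \eqref{main-asymptotic}: for any positive continuous weight $w$ on a neighborhood of a $C^2_+$ convex body $\tilde K\subset\R^n$,
\[
\lim_{N\to\infty}N^{\frac{2}{n-1}}\inf_{\tilde P\in\mathcal P_N(\tilde K)}\int_{\tilde K\setminus\tilde P}w\,dy\;=\;\tfrac12\,\del_{n-1}\Bigl[\int_{\partial\tilde K}w^{\frac{2}{n+1}}H_{n-1}(\tilde K,\cdot)^{\frac{1}{n+1}}\,d\mathcal H^{n-1}\Bigr]^{\frac{n+1}{n-1}}.
\]
This is obtained by re-running Gruber's economic cap-covering proof of \eqref{main-asymptotic}: localize $\partial\tilde K$ by $C^2_+$ charts, replace each local piece by its osculating elliptic paraboloid, compute the optimal local vertex density in terms of $\del_{n-1}$ (which is a purely dimensional constant governing asymptotically optimal simplex tilings of the tangent hyperplane and is insensitive to a smooth positive weight), and then use a H\"older/Jensen-type optimization to identify the globally optimal vertex density, which turns out to be proportional to $(w^{2/(n+1)}H_{n-1}^{1/(n+1)})^{(n-1)/(n+1)}$.

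The second main step is the curvature/measure identity that converts the weighted Euclidean integrand into $\Omega_s(K)$. A direct differential-geometric calculation of the action of $\pi_c$ on the second fundamental form (of the same flavor as the Jacobian computations in the proof of Theorem~\ref{thm:volume} and in Lemma~\ref{lem:model}) gives, for $x\in\partial K$ and $\tilde x=\pi_c(x)$,
\[
H_{n-1}(\tilde K,\tilde x)^{\frac{1}{n+1}}w(\tilde x)^{\frac{2}{n+1}}\,d\mathcal H^{n-1}(\tilde x)\;=\;H^{\Sp}_{n-1}(K,x)^{\frac{1}{n+1}}\,d\mathcal H^{n-1}(x).
\]
Integrating, the bracket in Step~1 equals $\Omega_s(K)$ exactly, and substituting yields the claimed asymptotic.

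The main obstacle lies in Step~1: the weighted Gruber asymptotic is not a black-box consequence of \eqref{main-asymptotic}, and its proof requires careful re-examination of Gruber's cap-covering argument to confirm that the leading constant $\del_{n-1}$ remains unaffected by a smooth positive weight, i.e.\ that the weight enters only through the integrand of the effective ``affine surface area''. Once Step~1 is in place, the pointwise curvature identity in Step~2 is a routine (if lengthy) calculation analogous to the Jacobian expansion carried out in the proof of Theorem~\ref{thm:volume}, and the conclusion follows by combining the two steps.
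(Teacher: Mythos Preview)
Your proposal is correct and follows essentially the same route as the paper: reduce to a weighted Euclidean best-approximation problem via the gnomonic projection, establish the weighted analogue of \eqref{main-asymptotic} for inscribed polytopes, and then use the curvature/measure identity under $\pi_c$ to identify the resulting affine-surface-area integral with $\Omega_s(K)$. The only cosmetic difference is that the paper obtains the weighted inscribed asymptotic by citing Glasauer--Gruber and adapting Ludwig's unconstrained weighted result (replacing $\mathrm{ldel}_{n-1}$ by $\del_{n-1}$), and handles the spherical transfer by invoking the computations already carried out in Besau--Ludwig--Werner, whereas you propose to redo Gruber's cap-covering argument and the curvature calculation from scratch.
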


\begin{proof}
The proof is a suitable modification of the proof of \cite[Theorem 2.4]{BLW}, and thus, we only sketch it. First, we derive an asymptotic formula for the weighted volume difference of any convex body $C\in\mathcal{K}^n$ with $C^2_+$ boundary and a minimizing inscribed polytope with $N$ vertices  as $N\to \infty$.  To do so, we can use a formula of Glasauer and Gruber \cite{glasgrub} for the weighted volume difference, combined with adaptations of the arguments of Ludwig \cite{Ludwig1999} from the general unconstrained case of weighted best approximation to the inscribed case. As a result, we only need to replace ${\rm ldel}_{n-1}$ with ${\rm del}_{n-1}$ in \cite[Theorem 3]{Ludwig1999} to obtain the desired formula for weighted inscribed approximation in the Euclidean case. 

To obtain the desired formula for the inscribed approximation of spherical convex bodies contained in an open hemisphere of class $C^2_+$, we can now argue as Besau, Ludwig and Werner do in the proof of \cite[Theorem 2.4]{BLW}, where the more general case in which the polytopes are unconstrained was settled. Simple modifications to their arguments yield the result, only now we use the inscribed weighted approximation result, rather than the unconstrained one. \end{proof}

Choosing $n=2$ in Theorem \ref{BLW-thm} and using ${\rm del}_1=1/6$, we obtain
   \begin{equation}\label{special-case}
\lim_{N\to\infty}\dist_s(K,\mathcal{P}^{\mathbb{S}^2}_N(K))N^2=\frac{1}{12}\Omega_s(K)^3.
    \end{equation}
    We are now in position to prove Theorem \ref{isop-ineq-floating-area}.

\begin{proof}[Proof of Theorem \ref{isop-ineq-floating-area}]
    Let $K\in\mathcal{K}_p(\mathbb{S}^2)$ be $c$-symmetric with $C^2_+$ boundary. Combining \eqref{special-case} and Theorem \ref{spherical-macbeath-thm},  we get
    \begin{align*}
        \frac{1}{12\area(K)}\Omega_s(K)^3&=\lim_{N\to\infty}\frac{\dist_s(K,\mathcal{P}_N^{\mathbb{S}^2}(K))}{\area(K)}N^2\\
        &\leq \lim_{N\to\infty}\frac{\dist_s(D_c(K),\mathcal{P}_N^{\mathbb{S}^2}(D_c(K)))}{\area(D_c(K))}N^2
        =\frac{1}{12\area(D_c(K))}\Omega_s(D_c(K))^3.
    \end{align*}
    Since $\area(K)=\area(D_c(K))$, the result follows.
\end{proof}

\begin{remark}
    For $p\in\R\setminus\{-n\}$ and $K\in\mathcal{K}(\mathbb{S}^n)$, Besau and Werner \cite{Besau-Werner-Lp-2024} very recently defined the \emph{$L_p$ floating area} of $K$ by
    \[
    \Omega_{s,p}(K)=\int_{\partial K}H_{n-1}^{\mathbb{S}^n}(K,x)^{\frac{p}{n+p}}\,d\mathcal{H}^{n-1}(x),
    \]
    where one assumes that $K$ is of class $C_+^2$ for $p<0$. This may be regarded as a spherical analogue of the (Euclidean) $L_p$ affine surface area; see \cite{Besau-Werner-Lp-2024}. In \cite[Corollary 6.4]{Besau-Werner-Lp-2024}, they proved that for $p\geq 1$, $n\geq 3$ and all  $K\in\mathcal{K}(\mathbb{S}^n)$ whose ``GHS-center" is the origin and  whose gnomonic projection is contained in the ball of radius $\sqrt{n(n-2)}$, 
    \begin{equation}\label{Lp-inequality}
        \Omega_{s,p}(K) \leq \Omega_{s,p}(B(K))
    \end{equation}
    with equality if and only if $K$ is a geodesic ball. Using \cite[Theorem 6.3]{Besau-Werner-Lp-2024} and Theorem \ref{isop-ineq-floating-area}, it follows  that inequality \eqref{Lp-inequality} also holds in the planar case $n=2$ for all $p\geq 1$ and all $K\in\mathcal{K}_c(\mathbb{S}^2)$ which are $c$-symmetric and have $C_+^2$ boundary. Our method does not characterize the equality conditions.
\end{remark}

\section{A spherical version of the Winternitz theorem}\label{sec:Winternitz}

Let $K$ be a convex body in $\R^2$. The \emph{centroid} $g(K)$ of $K$ is the point
\[
g(K) = \frac{1}{\area(K)}\int_K x\,dx.
\]
Note that $g(K)$ lies in the interior of $K$. Let $L$ be any line passing through $g(K)$. Then $L$ cuts $K$ into two convex bodies $K_1$ and $K_2$, contained in opposite closed half-planes bounded by $L$. The following result is due to Winternitz (for a proof, see \cite[pp. 54--55]{Blaschke-1923}).
 
\begin{theorem}[Winternitz]\label{winternitz}
    \begin{equation}
    \frac{4}{5}\leq\frac{{\rm area}(K_1)}{{\rm area}(K_2)} \leq \frac{5}{4},
\end{equation}
with equality if and only if $K$ is a triangle and $L$ is parallel to one side of the triangle.
\end{theorem}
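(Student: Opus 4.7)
The plan is to reduce this two-dimensional problem to a one-variable extremal problem via the horizontal width function of $K$ relative to $L$. After an affine normalization placing $L$ on the $x$-axis and $g(K)$ at the origin, define $w(y) = \mathcal{H}^1(\{x : (x,y) \in K\})$ on the vertical extent $[-b,c]$ of $K$. Convexity of $K$ forces $w$ to be non-negative and concave on $[-b,c]$, vanishing at the endpoints (the case of flat horizontal supporting sides is handled by continuity). Then
\[
\area(K_1) = \int_0^c w(y)\,dy, \qquad \area(K_2) = \int_{-b}^0 w(y)\,dy,
\]
and the centroid condition $g(K)\in L$ translates into $\int_{-b}^c y\, w(y)\, dy = 0$. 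The theorem becomes the claim that, for admissible $w$, the ratio $\int_0^c w / \int_{-b}^0 w$ lies in $[4/5, 5/4]$.

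The central step is to show this ratio is extremized when $w$ is affine on $[-b,c]$, equivalently when $K$ is a triangle with one side parallel to $L$. I would argue this by extreme-point analysis: the map $w\mapsto \int_0^c w$ is linear on the convex cone of non-negative concave functions subject to the two linear constraints $\int w = \area(K)$ and $\int yw = 0$, so its extrema occur at extreme points of the feasible set. One checks that such extreme points are \emph{tent} functions, i.e.\ piecewise linear concave graphs with a single interior break; a further elementary optimization over the location of the break and the endpoints $-b,c$ forces the extremum to the degenerate case of no interior break, i.e.\ to an affine $w$. A slicker alternative is a direct shaking argument: replacing any curved portion of the graph of $w$ by a supporting chord preserves concavity, non-negativity and total area, and a compensating small adjustment elsewhere preserves the first moment while strictly pushing $\area(K_1)/\area(K_2)$ toward its extreme value.

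Once the problem is reduced to triangles, the proof concludes by an explicit computation and affine invariance. Taking $K = \conv\{(0,0),(1,0),(0,1)\}$ with centroid $(1/3,1/3)$, a line through the centroid parallel to a side sits at $1/3$ of the opposite height, cuts off a similar triangle of scale factor $2/3$ (area $4/9$) and leaves a trapezoid of area $5/9$, giving the ratio $4/5$. Since affine maps preserve centroids, area ratios and the property $L\ni g(K)$, this extends to every triangle cut through its centroid by a line parallel to a side; the equality statement then follows from the strictness of the inequalities in the shaking / extreme-point step. The main obstacle I expect is precisely this middle step: rigorously checking that the optimum among tent functions is attained at the degenerate affine case rather than at an interior-break tent. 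This reduces to a one-variable calculus computation on the position of the tent's vertex, which is elementary but requires a case analysis according to whether the vertex lies above or below $L$, and to a careful bookkeeping of the moment-preserving perturbations so as to remain in the admissible class throughout.
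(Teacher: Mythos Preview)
The paper does not actually prove Theorem~\ref{winternitz}; it is quoted as a classical result of Winternitz, with a reference to Blaschke \cite[pp.~54--55]{Blaschke-1923} for a proof. So there is no ``paper's own proof'' to compare against, and your proposal should be judged on its own.

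Your outline is the standard route and is essentially sound. The reduction to the one-dimensional width function $w$ is exactly the Schwarz-symmetrization step used in the classical proofs (and in Gr\"unbaum's $n$-dimensional extension), and your explicit triangle computation is correct: the line through the centroid parallel to a side cuts off a similar triangle of linear scale $2/3$, hence of relative area $4/9$, giving the ratio $4/5$. The affine-invariance argument for the equality case is also right.

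The one place that needs tightening is the extreme-point step. Two issues: (i) the support $[-b,c]$ of $w$ is itself variable, so you are not optimizing over concave functions on a fixed interval, and the resulting feasible set is not obviously compact; (ii) after imposing the two linear constraints $\int w=A$ and $\int yw=0$, extreme points of the constrained set need not be single-break tents---in general one picks up an extra break per linear constraint. Your ``shaking'' alternative avoids (ii) but you should be explicit that each local replacement keeps $w$ concave and nonnegative, preserves \emph{both} $\int w$ and $\int yw$, and moves the ratio monotonically. The cleanest way to close the argument is the direct comparison used in Gr\"unbaum-type proofs: replace each half $w|_{[0,c]}$ and $w|_{[-b,0]}$ by the affine function with the same value at $0$ and the same area, check via concavity that this pushes the first moment of each half away from $L$, and conclude that the resulting double-triangle has centroid on or beyond $L$; then a short monotonicity computation on the two-triangle family yields the bound. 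Your tent-function calculus, once you impose the moment constraint (which forces the apex to sit at $a=b-c$), does give a ratio $1+\tfrac{1}{b}-\tfrac{1}{b^2}$ on $b\in[1,2)$, monotone up to $5/4$, confirming that the degenerate affine case is indeed extremal---so the computation you flag as the ``main obstacle'' is genuinely routine once set up correctly.
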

Winternitz's theorem was rediscovered by many authors, including \cite{Ehrhart-1955,LL1935,Neumann-1945,Newman-1958,YB-1961}.  Gr\"unbaum \cite{Grunbaum-1960} extended Theorem~\ref{winternitz} to convex bodies in all dimensions. Generalizations of Gr\"unbaum's result to sections and projections of convex bodies were proved in, e.g., \cite{FMY-2017,MNRY,MSZ,SZ-2017}. For other generalizations of Winternitz's theorem within geometry, see also \cite{BCHM-2011,Shyntar-Yaskin}. Winternitz's theorem is closely connected to the Winternitz measure of symmetry, which has recently found applications to data depth in statistics \cite{NSW-2019}.  

In this section, we shall obtain a spherical analogue of Winternitz's theorem for spherically convex disks. First, we will need the following definition of a spherical centroid for certain subsets of $\mathbb{S}^n$. Definition~\ref{defn:sphcentroid} can be found in the very recent paper \cite{BHPS} of Besau, Hack, Pivovarov and Schuster, although it seems that the same concept had been known for a long time before and was used in engineering literature  (see, e.g., \cite{Brock,Bjerre, Fog,Galperin}).

\begin{definition}\label{defn:sphcentroid}
    For a finite set $\{u_1,\ldots,u_N\}\subset\mathbb{S}^n$ and a Borel set $A\subset\mathbb{S}^n$ with $\sigma(A)>0$, their respective \emph{spherical centroids} are defined by
    \[
g_s(\{u_1,\ldots,u_N\}) := \frac{\sum_{i=1}^N u_i}{\left\|\sum_{i=1}^N u_i\right\|_2}
\qquad \text{and} \qquad
g_s(A):=\frac{\int_A u\,d\sigma(u)}{\left\|\int_A u\,d\sigma(u)\right\|_2},
    \]
    provided the denominators are nonzero.
\end{definition}

Imagine $\mathbb{S}^n$ as the unit sphere of $\R^{n+1}$ centered at $o$. It is well known that for any set $A$ in $\R^{n+1}$, the first moment of the set with respect to any hyperplane $H$ containing the centroid is zero. Now imagine $A \subset \mathbb{S}^n$ with $\sigma(A) > 0$ as a `hypersurface' in $\R^{n+1}$. Note that then $g_s(A)$ is the radial projection of the Euclidean centroid $g(A)$ of $A$ to the sphere. Thus, the Euclidean first moment of $A$ with respect to any hyperplane $H$ containing $o$ and $g(A)$ is zero. Note also that if $H$ is such a hyperplane, and it intersects $\mathbb{S}^{n}$ in the great sphere $G$, then the Euclidean (signed) distance of a point $p \in \mathbb{S}^n$ from $H$ is equal to $\sin \theta_p$, where $\theta_p$ denotes the (signed) spherical distance of $p$ from $G$. This leads to the following.

\begin{remark}\label{rem:sphlever}
For any great sphere $H$ of $\mathbb{S}^n$ and point $p \in \mathbb{S}^n$, let $\theta_H(p)$ denote the signed spherical distance of $p$ from $G$ with respect to some fixed orientation of $\mathbb{S}^n$. Then, for any Borel set $A\subset\mathbb{S}^n$ with $\sigma(A)>0$, and for any great sphere $H$ of $\mathbb{S}^n$ , the equality
\[
0 =  \iint_{A} \sin \theta_H(p) \, d \sigma.
\]
holds if and only if $H$ contains $g_s(A)$.
The same argument shows that this observation holds also for finite point systems on the sphere.
\end{remark}

Remark~\ref{rem:sphlever} yields the `spherical rule of the lever' that can be found in \cite[Eq. 17]{Galperin}. Based on this, for any great sphere $H$ and Borel set $A$, we set
\[
M_H(A) = \iint_{A} \sin \theta_H(p) \, d \sigma.
\]

In the following list we collect some properties of spherical centroids and spherical moments.
\begin{enumerate}
\item[(1)] For any great sphere $H \subset \Sph^n$ and Borel sets $A, B \subset \Sph^n$ satisfying $\inter(A)\cap\inter(B)=\emptyset$, we have $M_{H}(A\cup B) = M_{H}(A)+M_{H}(B)$.
\item[(2)] If $K \subset \Sph^n$ is a spherical convex body, then $g_s(K) \in \inter (K)$.
\end{enumerate}

Before stating our main result, we prove a lemma.

\begin{lemma}\label{lem:sphcentroid}
Let $K \subset \Sph^2$ be a convex disk with piecewise smooth boundary. Then $K$ is contained in the open hemisphere with the centroid $g_s(K)$ of $K$ as its center.
\end{lemma}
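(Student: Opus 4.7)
Proof plan. The plan is to argue by contradiction. Suppose $K$ is not contained in the open hemisphere $S_c$ centered at $c := g_s(K)$. Since $K$ is compact and $p \mapsto \langle c, p\rangle$ is linear on $\R^3$, its minimum over $K$ is attained at an extreme point $q_0$ of $\conv_e(K)$ belonging to $\bd(K)$, and the contradiction hypothesis forces $\alpha := d_s(c, q_0) \geq \pi/2$. Using the piecewise smoothness of $\bd(K)$ (or by a standard smoothing approximation), assume $\bd(K)$ is smooth at $q_0$: the supporting great circle $L_{q_0}$ at $q_0$ is then tangent to the spherical circle $\bd(B_s(c,\alpha))$ and hence perpendicular to the arc $[c,q_0]_s$. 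In an orthonormal coordinate system of $\R^3$ with $c = (0,0,1)$ and $q_0 = (\sin\alpha, 0, \cos\alpha)$, the support inequality reads
\[
\cos\alpha \cdot p_x - \sin\alpha \cdot p_z \leq 0 \qquad \text{for every } p \in K.
\]

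The plan for the contradiction is to combine this support inequality with two key facts: (i) $c \in \ri(K)$ from property (2) of spherical centroids, so that the spherical ice-cream cone $\conv_s(B_s(c,\varepsilon)\cup\{q_0\})$ is in $K$ for some $\varepsilon>0$ and contributes a strictly positive amount to $\int_K p_x \, d\sigma$; and (ii) the spherical lever rule of Remark~\ref{rem:sphlever}, which gives $\int_K p_x\, d\sigma = \int_K p_y\, d\sigma = 0$ (take the great circles $\{x=0\}$ and $\{y=0\}$ through $c$). By (i) and (ii), $K$ must carry compensating mass in the half $\{p_x<0\}$; but the support inequality combined with $\alpha \geq \pi/2$ (so $\cos\alpha \leq 0$) forces any such point to satisfy $p_z \geq |p_x|\cdot |\cot\alpha|$, confining it to a narrow spherical wedge about the north pole. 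Parameterizing $K$ in spherical polar coordinates about $c$ as the star-shaped set $\{(\rho,\phi) : 0 \leq \rho \leq \rho(\phi)\}$ (valid since $c \in \ri(K)$), with $\rho(0)=\alpha$ and $\rho(\phi)\leq \alpha$ everywhere, the moment condition transforms into
\[
\int_0^{2\pi}\cos\phi\cdot f(\rho(\phi))\,d\phi = 0, \qquad f(\rho) := \rho - \tfrac{1}{2}\sin(2\rho),
\]
where $f$ is nonnegative and strictly increasing on $[0,\pi]$. I plan to exploit the convexity of $K$ to bound the rate at which $\rho(\phi)$ can decrease from its maximum $\alpha$, and then show that this bound is incompatible with the vanishing of the above Fourier coefficient when $\alpha \geq \pi/2$.

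The main obstacle I anticipate is making this final moment-balancing step rigorous. The natural route is to translate spherical convexity of $K$ into a second-order differential inequality on $\rho(\phi)$ via the tangent great-circle condition at each smooth point of $\bd(K)$, integrate against $\cos\phi$, and combine with the vanishing Fourier coefficient; this parallels the ``Lexell-circle'' comparison used in the proof of Theorem~\ref{thm:planarconvexity}. A cleaner alternative is to first approximate $K$ by a spherical polygon (so that $\rho(\phi)$ becomes piecewise smooth with explicit corner behavior) and argue combinatorially on the vertices. The boundary case $\alpha = \pi/2$, in which the support inequality degenerates to $p_z \geq 0$ (i.e., $K \subset$ closed upper hemisphere touching the equator at $q_0$), should serve as a useful sanity check: in that case the argument reduces to showing that a convex disk in the closed upper hemisphere whose boundary touches the equator cannot have spherical centroid exactly at the north pole, which can likely be verified directly by an explicit moment computation.
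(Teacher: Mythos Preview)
Your setup is correct: the choice of $q_0$ as a farthest point from $c$, the support inequality $\cos\alpha\,p_x\le\sin\alpha\,p_z$, and the polar-coordinate reformulation $\int_0^{2\pi}\cos\phi\,f(\rho(\phi))\,d\phi=0$ with $f(\rho)=\rho-\tfrac12\sin2\rho$ are all accurate. But the argument is genuinely incomplete at the point you yourself flag. Neither of the two completion routes you sketch is close to a proof. ``Translate convexity into a second-order differential inequality on $\rho(\phi)$ and integrate against $\cos\phi$'' is a slogan, not an argument: the geodesic-curvature condition on $\bd(K)$ in these coordinates is nonlinear and does not obviously interact well with the weight $\cos\phi$. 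The polygon route has the same difficulty in discrete form. Even your ``sanity check'' case $\alpha=\pi/2$ is not a routine computation: you must rule out every convex disk in the closed upper hemisphere touching the equator at a single point and having centroid at the pole, and nothing in your outline singles out the relevant obstruction. Integrating the support inequality itself over $K$ only reproduces $\int_K p_z>0$, which you already know. So the contradiction is not ``nearly there''; the essential idea is still missing.

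The paper's proof bypasses all of this with a single identity. By a divergence-type formula on $\Sph^2$ (cited from \cite{AD2004}),
\[
2\iint_K p\,d\sigma=\int_{\bd(K)}N(p)\,ds,
\]
where $N(p)$ is the inward unit normal at a smooth boundary point $p$. Each $N(p)$ satisfies $\langle q,N(p)\rangle\ge0$ for all $q\in K$, so $N(p)$ lies in the set $K'=\{q\in\Sph^2:\langle q,p\rangle\ge0\text{ for all }p\in K\}$, which is exactly the set of centers of closed hemispheres containing $K$ (equivalently, $-K'$ is the spherical polar of $K$, so $K'$ is a spherically convex disk). Thus $g_s(K)$ is the spherical normalization of an integral average of points of $K'$, hence lies in $K'$; since the normals are not all equal, it lies in the interior of $K'$. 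That is precisely the statement that $K$ is contained in the \emph{open} hemisphere centered at $g_s(K)$. The whole lemma follows in three lines, with no moment-balancing and no case analysis on $\alpha$. This is the idea your plan is missing: rather than trying to locate $c$ relative to $K$, locate $c$ inside the dual body $K'$.
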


\begin{proof}
For any smooth point $p \in \bd (K)$, let $N(p)$ denote the inner unit normal of $K$, if it exists. Then, by \cite{AD2004},
\[
2 \iint_K p \, d \sigma = \int_{\bd (K)} N(p) \, ds,
\]
where $ds$ denotes integration with respect to arclength. By Definition~\ref{defn:sphcentroid}, this equality can be written as
\[
g_s(K) = \frac{1}{2 \area(K)} \int_{\bd (K)} N(p) \, ds.
\]
Let $K' = \{ q \in \Sph^2 : \langle p, q \rangle \geq 0 \hbox{ for every } p \in K \}$. Then $K'$ is the set of points $q\in\Sph^2$ with the property that the closed hemisphere centered at $q$ contains $K$. We note that $-K'$ is the spherical polar of $K$, and thus, $K'$ is a convex disk. Furthermore, for every smooth point $p \in \bd(K)$, $N(p)$ is a boundary point of $K'$. This yields that $g_s(K)$ can be written as an integral average of boundary points of $K'$. This and the fact that $K'$ has nonempty interior proves the assertion.
\end{proof}

Our main result of Section \ref{sec:Winternitz} is as follows.

\begin{theorem}\label{thm:sphWinternitz}
Let $K \subset \Sph^2$ be a convex disk with centroid $c$. Let $L$ be a great circle through $c$, and let $S_1$ and $S_2$ denote the closed hemispheres bounded by $L$. Let $K_1 = K \cap S_1$ and $K_2 = K \cap S_2$. Then there is a triangle $T$ with $c$ as its centroid such that, with the notation $T_1 = T \cap S_1$ and $T_2 = T \cap S_2$, we have
\[
\frac{\area(K_2)}{\area(K_1)} \leq \frac{\area(T_2)}{\area(T_1)}.
\]
\end{theorem}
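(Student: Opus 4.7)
My plan is (i) to use a Steiner symmetrization to reduce to the case when $K$ is symmetric about the great circle $H$ through $c$ perpendicular to $L$, preserving both sides of the desired inequality, and then (ii) to produce the required triangle via an Intermediate Value Theorem argument applied to a continuous family of isoceles spherical triangles with centroid $c$.

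Working in the coordinates from the end of Subsection~\ref{subsec:prelim} (so $L=\{z=0\}$, $H=\{y=0\}$, and $c=(1,0,0)$), note that $c\in\inter(K)$ by property (2) following Lemma~\ref{lem:sphcentroid}; hence $K\cap L\neq\emptyset$ and, by Remark~\ref{rem:validity2}, $K$ satisfies the connectedness property, so $K^*:=\sigma_{L,H}(K)$ is well-defined. The definition of the Steiner symmetral preserves the angular length $\ell(\theta)$ of $K\cap C_\theta$ for each $\theta$, so $\area(K_i^*)=\area(K_i)$ for $i=1,2$. Moreover, $K^*$ is $H$-symmetric, so $M_H(K^*)=0$, and $M_L(K^*)=\int_{-\pi/2}^{\pi/2} \ell(\theta)\sin\theta\cos\theta\,d\theta=M_L(K)=0$ (the last equality holds by Remark~\ref{rem:sphlever}, since $c\in L$). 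Hence $g_s(K^*)=L\cap H=c$, and since both sides of the target inequality depend only on $K^*$, I may henceforth assume $K$ is symmetric about $H$ (note that $K^*$ need not be convex, but the remaining argument constructs $T$ using only the ratio and the centroid condition).

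Set $\rho:=\area(K_2)/\area(K_1)$ and assume without loss of generality $\rho\geq 1$. Consider the family of isoceles spherical triangles symmetric about $H$ of the form $T(a,w)=\conv_s\{v_1,v_2,v_3\}$ with base vertices $v_{1,2}=(\cos a\cos w,\pm\cos a\sin w,-\sin a)\in S_2$ and apex $v_3=(\cos b,0,\sin b)\in S_1$, where $a,w\in(0,\pi/2)$ and $b=b(a,w)>0$ is determined by the centroid condition $M_L(T(a,w))=0$ via the Implicit Function Theorem. Then the ratio $r(a,w):=\area(T(a,w)_2)/\area(T(a,w)_1)$ is a continuous function on an admissible open subset of the $(a,w)$-plane. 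The expected asymptotic behavior is: as $(a,w)\to(0,0)$ the triangle tends (after rescaling) to a Euclidean isoceles triangle with centroid on $L$, and the classical Winternitz computation gives $r\to 5/4$; as $(a,w)$ approaches the boundary of the admissible region (for instance $w\to\pi/2$ with the base stretching toward the pole set $H_0$, forcing $b$ high into $S_1$), $r\to\infty$. An Intermediate Value Theorem argument then produces a triangle with $r=\rho$ whenever $\rho\geq 5/4$. For $\rho\in[1,5/4)$, an auxiliary one-parameter family interpolating between an $L$-symmetric triangle (for which $r=1$) and the small-$a,w$ limit (with $r=5/4$) covers the remaining values by the same continuity argument.

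\textbf{Main obstacle.} The crux of the proof is the rigorous asymptotic analysis of $r(a,w)$ under the transcendental centroid constraint $M_L(T(a,w))=0$. Since the sides of a spherical triangle are great-circle arcs rather than arcs of distance curves, the slice-length function $\ell_T(\theta)$ does not admit a simple closed form, and the coupling of $(a,b,w)$ through the centroid equation is genuinely non-elementary. Verifying that $r\to\infty$ in the degenerate limit, and that $b(a,w)$ remains in the admissible range throughout, will likely require Lexell-circle identities of the type exploited in the proof of Theorem~\ref{thm:planarconvexity}, together with careful area comparisons between $T(a,w)$ and simpler reference regions on $\Sph^2$.
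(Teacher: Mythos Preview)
Your proposal has a genuine gap and takes a very different route from the paper.

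First, step~(i) is superfluous: your step~(ii) attempts to construct $T$ using only the numerical value $\rho=\area(K_2)/\area(K_1)$ and never refers to $K$ or $K^*$ again, so the symmetrization accomplishes nothing. (It also forces you, for no reason, to look only among $H$-symmetric isoceles triangles.)

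Second, and more seriously, step~(ii) is not a proof but a program with an acknowledged hole. What you are really trying to show is that the set of ratios $r(T)=\area(T_2)/\area(T_1)$, as $T$ ranges over triangles with centroid $c$, covers $[1,\infty)$. If that were established the theorem would be immediate, since for any $K$ one could simply pick a triangle with ratio $\geq\rho$. But you have not verified either endpoint: that $r\to\infty$ somewhere on the boundary of your parameter domain, or that the two one-parameter families you describe can be glued into a single connected path on which $r$ varies continuously from $1$ up through $5/4$ and beyond. The centroid constraint $M_L(T(a,w))=0$ is transcendental, and you yourself flag the required asymptotics as the ``Main obstacle''. As written, the argument is incomplete precisely at its only nontrivial step.

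The paper's proof avoids all of this by a direct moment-matching construction. With $[q_1,q_2]_s=K\cap L$, it chooses an apex $p\in S_1$ so that the triangle $T_1=\conv_s\{p,q_1,q_2\}$ satisfies $M_L(T_1)=M_L(K_1)$, and so that the two sides $[p,q_i]_s$ meet $\bd(K_1)$ at points equidistant from $L$. It then extends those sides into $S_2$ and truncates by a chord at the right height to obtain $T_2$ with $M_L(T_2)=M_L(K_2)$; the union $T=T_1\cup T_2$ is a genuine triangle. Because the set differences $T_1\setminus K_1$ and $K_1\setminus T_1$ (and likewise on the $S_2$ side) lie on the ``wrong'' sides of a fixed distance level, equal first moments force $\area(T_1)\le\area(K_1)$ and $\area(T_2)\ge\area(K_2)$. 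Since $M_L(T)=0$ the centroid of $T$ lies on $L$, and a rotation about the poles of $L$ places it at $c$. No implicit-function theorem, no asymptotic analysis of $b(a,w)$, and no Intermediate Value argument are needed.
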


\begin{proof}
For convenience, we assume that $K$ is smooth and strictly convex. This, by Lemma~\ref{lem:sphcentroid},  implies that $K$ is contained in the open hemisphere centered at $c$. Let $[q_1,q_2]_s = K \cap L$  and let $\hat{S}_1 = S_1 \setminus L$. Then  for every $p \in \hat{S}_1$, the triangle $T(p)$ with vertices $\{ p, q_1, q_2 \}$ exists for any $p \in \hat{S}_1$. Furthermore, the function $M_L(T(p))$ is a $C^{\infty}$-class function of $p$ on $\hat{S}_1$. Since $T(p_1) \subsetneq T(p_2)$ implies $M_L(T(p_1)) < M_L(T(p_2))$, the Jacobian of $f(p) = M_L(T(p))$ is not zero at any $p \in \hat{S}_1$. Furthermore, since $K_1 \subset S_1$, and for a suitable choice of $p$, $f(p)$ can get arbitrarily close to $M_L(S_1)$, there are points $p \in \hat{S}_1$ such that $f(p) = M_L(K_1)$. Combining these observations, we obtain that for any open half-great circle in $\hat{S}_1$, connecting the midpoint of $[q_1,q_2]_s$ and its antipodal point, there is a unique point $p$ with $f(p) = M_L(K_1)$. By continuity, these points belong to a continuous curve $\Gamma$. Note that no point $p$ of $\Gamma$ belongs to $K_1$, as otherwise $T(p) \subsetneq K_1$. Thus, for any $p \in \Gamma$, there are unique points $x_1, x_2$ such that for $i=1,2$, $[p,q_i]_s \cap K_1 =[p,x_i]_s$. By continuity, we can choose $p$ in such a way that $x_1$ and $x_2$ have the same distance $\theta$ from $L$. We choose such a point $p$, and set $T_1 = T(p)$.
Now observe that, apart from $x_1,x_2$, the distance of any point of $T_1 \setminus K_1$ from $L$ is strictly greater than $\theta$, and for any point of $K_1 \setminus T_1$ it is at most $\theta$. 
In the notation of Remark \ref{rem:sphlever}, this means that $\theta_{L}\left(g_s(K_1\setminus T_1)\right)<\theta_L(g_s(T_1\setminus K_1))$. Since $M_L(T_1 \setminus K_1) = M_L(K_1 \setminus T_1)$ by our construction, we have $\area(T_1 \setminus K_1) < \area(K_1 \setminus T_1)$; that is, $\area(T_1) < \area(K_1)$.

In the following, for $i=1,2$, let $L_i$ be the great circle spanned by $[p,q_i]_s$, and let $X$ be the lune bounded by $L_1, L_2$ and containing $T_1$.  For $i=1,2$, we set $X_i = X \cap S_i$, and observe that by convexity, $X_2$ is a spherical triangle containing $K_2$, and hence, $|M_L(X_2)| > |M_L(K_2)|$. We note also that $T_1 = X_1$ and $M_L(T_1) = |M_L(K_2)|$ yields that $X_2$ contains the center of $X$. Now, for any $0 < \theta'<\frac{\pi}{2}$, let $Y_{\theta'}$ denote set of points of $X$ whose spherical distance from $L$ is at most $\theta'$. Then, by convexity, $\bd (K_2)$ contains exactly two points at distance $\theta'$ from $L$. Let $y_1$ and $y_2$ denote these points, and let $L'$ denote the great circle through $y_1, y_2$. Then $[y_1,y_2]_s = L' \cap K_2$. Let $z_i$ be the intersection of $L_i$ and $L'$ on the boundary of $X_2$. Let $T_2(\theta')$ denote the spherical quadrangle $\conv_s \{ q_1,q_2,z_1,z_2\}$. In the following, we choose the value of $\theta'$ such that $M_L(T_2(\theta'))=M_L(K_2)$, and set $T_2 = T_2(\theta')$. Note that $T_2 \setminus K_2 \subseteq Y_{\theta'} \setminus K_2$ and $K_2 \setminus Y_{\theta'} \subseteq K_2 \setminus T_2$. Thus, apart from $y_1,y_2$, any point of $T_2 \setminus K_2$ is closer to $L$ than any point of $K_2 \setminus T_2$. This implies that $\area(T_2) > \area(K_2)$. Thus, we have
\[
\frac{\area(K_2)}{\area(K_1)} \leq \frac{\area(T_2)}{\area(T_1)}.
\]

Note that by our construction and Remark~\ref{rem:sphlever}, the centroid of $T$ lies on $L$. Let $\bar{T}$ denote the rotated copy of $T$ about the two poles of $L$ 
with the property that the centroid of $\bar{T}$ is $c$. Then $\bar{T}$ satisfies the conditions of the theorem.
\end{proof}

\section*{Acknowledgments}

We are  grateful to the anonymous referee for their careful reading and comments which helped improve this paper. SH would also like to thank Florian Besau and Vlad Yaskin for the valuable discussions on the topic of the paper.

\bibliographystyle{plain}
\bibliography{main_v5.bib}






\end{document}